\newcommand{\R}{\mathbb{R}}
\newcommand{\N}{\mathbb{N}}
\newcommand{\C}{\mathbb{C}}
\newcommand{\eps}{\epsilon}
\renewcommand\Re{\operatorname{Re}}
\newcommand\p{\partial}
\newtheorem{theorem}{Theorem}[section]
\newtheorem{lemma}[theorem]{Lemma}
\newtheorem{proposition}[theorem]{Proposition}
\newtheorem*{question*}{Question}
\theoremstyle{definition}
\newtheorem{remark}[theorem]{Remark}
\newtheorem*{remark*}{Remark}
\numberwithin{equation}{section}
\begin{document}

\title[]{On the growth properties of interior transmission eigenfunctions near corners}

\date{}

\author{Emilia L.K. Bl{\aa}sten}
\address{Emilia L.K. Bl{\aa}sten, Department of Computational Engineering, LUT University, Finland}
\email{emilia.blasten@iki.fi}

\author[]{Valter Pohjola}
\address{Valter Pohjola, Unit of Applied and Computational Mathematics, University of Oulu, Finland}
\email{valter.pohjola@gmail.com}

\begin{abstract}
We investigate the localization and vanishing of $L^2$ interior transmission eigenfunctions
at corners.
Past numerical computations suggest that these eigenfunctions localize at non-convex corners.
This phenomenon has, however, not been proven theoretically.
We show that localization does indeed occur for some  eigenfunctions at a non-convex corner.

We also investigate the vanishing of interior transmission eigenfunctions at a convex corner.
We prove that these eigenfunctions vanish at convex corners with reduced smoothness assumptions 
compared to earlier results. 
\end{abstract}

\maketitle

\tableofcontents

\section{Introduction} 

\noindent
The interior transmission problem  consists of finding $v,w \in L^2(D)$
and a wavenumber $k \in \C$, that solves the problem 
\begin{equation} \label{eq_ite}
\begin{aligned}
\begin{cases}
(\Delta + k^2) v = 0, \quad &\text{ in  }\quad D, \\
(\Delta + k^2n) w = 0, \quad &\text{ in }\quad D, \\
\p^j_\nu v = \p^j_\nu w, \quad &\text{  on }\quad\p D, \quad j=0,1, 
\end{cases}
\end{aligned}
\end{equation}
where $D \subset \R^n$ is a bounded Lipschitz domain, $n \in L^\infty(D)$ is the refractive index,
and $\nu$ is the outer unit normal vector to $\p D$.
The boundary condition is understood in the weak sense, and due to elliptic interior regularity can be replaced by the condition $v-w \in H^2_0(D)$.
We call $k$ an Interior Transmission Eigenvalue (ITEV) and the pair $(v,w)$ and the 
functions $v$ and $w$ Interior Transmission Eigenfunctions (ITEF).

The interior transmission problem arose originally in conjunction with 
reconstruction problems in scattering theory.
Certain numerical methods such as the linear sampling and factorization methods
need to avoid wave numbers $k$ that are  ITEVs  (see \cite{CKP89,Kr99,CCH02,CC06}), in order
to produce reliable reconstructions of a scatterer.

The interior transmission problem is also closely related to the phenomena 
of potentials that always produce a scattered wave (see e.g. \cite{BPS14,HSV16,LX17,BL21,SS21,BP22,CV23,CHLX24,HV25}).
A non-scattering incident wave and its corresponding total wave are ITEFs on the support of the potential.
Before \cite{BPS14} the converse was not known.
Special interest is given to real-valued transmission eigenvalue $k \in \R$, since these
are directly related to this type of inverse scattering problems.

\medskip
\noindent
Here we are interested in the behaviour of the ITEFs near corner points, and in particular
the localization phenomena for non-convex corners that has been observed in \cite{BLLW17,Liu20,CDHLW21,DLS21,DLWY21,DL24}.
Numerical computations have suggested that the ITEFs can become unbounded 
at the inward pointing corner point, in sharp contrast to e.g. Dirichlet eigenfunctions.
This type of localization at a non-convex corner is also in stark contrast to
the behaviour of ITEFs at convex corners. The latter vanish there. This has been proven in \cite{BL17a,BL17b,DLWY21} assuming extra regularity on the ITEFs and observed numerically in \cite{BLLW17,DLWY21}.
Our main task in this paper is to prove that some ITEFs do indeed become unbounded near an inwards pointing corner. This is the first proof of the singular and localization phenomenon for transmission eigenvalues at corners.

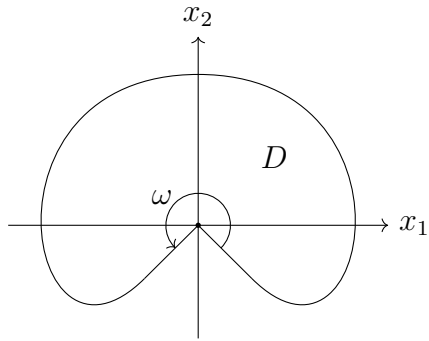
\begin{figure}[H]
  \begin{center}
    \begin{tikzpicture}
      \draw[->] (-2.5,0) -- (2.5,0) node[right] {$x_1$};
      \draw[->] (0,-1.5) -- (0,2.5) node[above] {$x_2$};
      \draw (0.7071,-0.7071) to (0,0);
      \draw (-0.7071,-0.7071) to (0,0);
      \draw (0.7071,-0.7071) to[out angle=-45, in angle=-135, curve through = {(0,2)}] (-0.7071,-0.7071); 
      \fill (0,0) circle (1pt);
      \draw (1,0.9) node {$D$};
      \draw[->] (0.3,-0.3) arc (-45:225:0.424) node[left, pos=0.6] {$\omega$};
    \end{tikzpicture}
    \caption{An example domain $D$ with a non-convex corner of opening angle $\omega > \pi$.
             At this type of corner the ITEF can become unbounded.}
    \label{fig_non_convex}
  \end{center}
\end{figure}

We focus on bounded domains $D \subset \R^2$  which have a single corner point $x_0$ and are otherwise
smooth. See figure \ref{fig_non_convex}. 
The opening angle at $x_0$ is denoted by $\omega$.
The localization phenomena occurs only for non-convex corners, i.e. when $\omega \in (\pi ,2\pi)$.
We further restrict $\omega$ so that $\omega \in (\omega_0, 2 \pi)$, where
$\omega_0 \approx 257.6^\circ$ is the solution to $\tan ( \omega) = \omega$. 
It seems that
our arguments should also work in the range $\omega \in (\pi, \omega_0)$
with slightly longer computations, we do not however pursue this case here.

Our main result on localization of ITEFs at non-convex corners is the following. 
This result is an immediate consequence of Proposition \ref{prop_main_loc}.

\begin{theorem} \label{thm_main_loc}
Let $\omega \in (\omega_0, 2\pi)$. Then there exists a corner domain $D$ satisfying \eqref{eq_def_D}, with a corner point at $x_0=0$,
and positive constants $n \neq 1$, and $k > 0$, such 
that the solutions $v$ and $w$ of \eqref{eq_ite}, are such that 
$$
v(r,\theta),w(r,\theta) \;\sim\;  C(\theta) r^{-\alpha}, \qquad   \text{ as } r \to 0, 
$$
for some $\alpha \in (0,1)$ and where $ C(\theta) \not \equiv 0$.
\end{theorem}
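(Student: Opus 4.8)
The plan is to recast the transmission system \eqref{eq_ite} as a single fourth-order boundary value problem, to read off the admissible corner exponents from the principal (biharmonic) part of the resulting operator, and finally to realise one singular exponent by a genuine global eigenpair; this is the substance of Proposition \ref{prop_main_loc}.

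First I would exploit that $n$ is a positive constant with $n \neq 1$. Setting $u = v - w$, the two Helmholtz equations in \eqref{eq_ite} combine into the fourth-order equation
\[
(\Delta + k^2)(\Delta + k^2 n)\,u = 0 \quad \text{in } D,
\]
while the weak transmission condition $v - w \in H^2_0(D)$ becomes the clamped condition $u = \partial_\nu u = 0$ on $\partial D$. Conversely, any solution $u$ of this clamped fourth-order problem produces an eigenpair through $w = [k^2(n-1)]^{-1}(\Delta + k^2)u$ and $v = w + u$. Hence if $u \sim r^{\beta} F(\theta)$ near the corner with $\beta \in (1,2)$, then $v,w \sim [k^2(n-1)]^{-1} r^{\beta - 2}\,(F'' + \beta^2 F)(\theta)$, since the $u$-contribution is of lower order. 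It therefore suffices to produce a clamped solution whose leading corner exponent satisfies $\beta \in (1,2)$ and whose non-harmonic angular part $F'' + \beta^2 F$ is not identically zero; then the theorem holds with $\alpha = 2 - \beta \in (0,1)$ and $C(\theta) \propto (F'' + \beta^2 F)(\theta) \not\equiv 0$.

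Next I would determine $\beta$. Near $x_0 = 0$ the domain agrees with a sector of opening $\omega$ and the principal part of the operator is $\Delta^2$, so the admissible exponents are those of the clamped biharmonic model $\Delta^2(r^\beta F) = 0$ with $F = F' = 0$ on both edges. Separating variables and taking the sector symmetric about its bisector splits the modes into even and odd parts, and the clamped determinant condition factors into $\sin((\beta - 1)\omega) = \pm (\beta - 1)\sin\omega$. On the odd branch, writing $\xi = \beta - 1$ and $h(\xi) = \sin(\xi\omega) - \xi\sin\omega$, the points $\xi = 0$ and $\xi = 1$ are always roots, and since $h'(0) = \omega - \sin\omega > 0$ a further root enters the interval $(0,1)$ precisely when $h'(1) = \omega\cos\omega - \sin\omega$ becomes positive, that is, when $\omega$ crosses the solution of $\tan\omega = \omega$. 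This is exactly $\omega_0$, so for $\omega \in (\omega_0, 2\pi)$ there is a root $\beta \in (1,2)$; moreover the associated $F$ is genuinely biharmonic, because its odd harmonic part alone cannot satisfy both clamped conditions, which forces $F'' + \beta^2 F \not\equiv 0$.

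It remains to realise this local singularity by an actual solution of the clamped fourth-order problem on a bounded domain, i.e. by a genuine transmission eigenpair. I would fix $D$ to coincide with the sector near $x_0$ and be smooth elsewhere, and work in Kondrat'ev-type weighted Sobolev spaces adapted to the corner, in which the clamped fourth-order operator is Fredholm and its transmission eigenvalues are discrete; for a suitable constant index the existence of real eigenvalues $k > 0$ is classical. Each eigenfunction then carries an asymptotic expansion whose leading corner term is $c\, r^{\beta} F(\theta)$, with a coefficient $c$ depending on the data $(n,k,D)$. The step I expect to be the main obstacle is to guarantee $c \neq 0$: the admissibility of the exponent $\beta$ only permits the singularity, it does not force it to be excited. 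I would resolve this by showing that $c$ does not vanish identically as an analytic function of the parameters, reducing its evaluation to the explicit biharmonic corner model, and then selecting positive $n \neq 1$ and real $k > 0$ for which $c \neq 0$. The resulting eigenpair satisfies $v,w \sim C(\theta) r^{-\alpha}$ with $\alpha = 2 - \beta \in (0,1)$ and $C \not\equiv 0$, as claimed. Finally, the restriction $\omega > \omega_0$ reflects the use of the odd branch; the even branch $\sin((\beta-1)\omega) = -(\beta-1)\sin\omega$ supplies a root $\beta \in (1,2)$ for every $\omega \in (\pi, 2\pi)$, which should extend the conclusion to $\omega \in (\pi, \omega_0)$ at the cost of longer computations.
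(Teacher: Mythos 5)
Your reduction to the clamped fourth-order problem, the identification of the corner exponents from the characteristic equation $\sin(z\omega)=\pm z\sin\omega$ with $z=\beta-1$, and the role of $\omega_0$ solving $\tan\omega=\omega$ all match the paper. You also correctly isolate the decisive difficulty: the admissible exponent only \emph{permits} the singularity, and one must show the leading coefficient $c$ is actually excited. But your proposed resolution of that step --- arguing that $c$ ``does not vanish identically as an analytic function of the parameters'' and evaluating it on ``the explicit biharmonic corner model'' --- is a genuine gap. For a fixed index $n$ and domain $D$ the wavenumber $k$ cannot be varied freely (it must remain a transmission eigenvalue), the eigenfunctions are only determined up to the eigenspace, and the pure corner model has no eigenvalue problem to evaluate $c$ on; nothing in your sketch produces a single concrete eigenpair with $c\neq 0$.

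The paper closes this gap by a different mechanism, which is worth internalizing. It introduces the compact, self-adjoint, positive operator $K_{\kappa,\lambda}=\Delta^{-2}(\kappa\Delta-\lambda)$ on $H^2_0(D)$, whose eigenfunctions $\{\psi_j\}$ form a \emph{complete} orthonormal basis of $H^2_0(D)$; each $\psi_j$ solves a buckling-type equation and hence generates an ITE pair for its \emph{own} parameters $(k_1^{(j)},k_2^{(j)})$, all real once $\lambda=1$ and $-\kappa$ is large. Separately, it proves a duality criterion (Proposition \ref{prop_sing_cond_D}): the leading singular coefficient of a clamped biharmonic solution with source $f$ is nonzero whenever $(f,\zeta_1)_{L^2(D)}\neq 0$, where $\zeta_1$ is a corrected dual singular function behaving like $r^{1-z_1}\varphi_1(\theta)$. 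The non-vanishing is then purely existential: expanding a fixed nonzero auxiliary function in the complete basis $\{\psi_j\}$ shows that the pairings $\int_D\psi_j(\tilde\lambda_j\zeta_1-\tilde\kappa_j\Delta\zeta_1)\,dx$ cannot all vanish, so at least one $\psi_j$ satisfies the duality criterion and is genuinely singular. Note that this exploits the freedom in the statement of the theorem --- only \emph{some} $(D,n,k)$ is required --- by sweeping over a whole family of ITE problems at once; a fixed-$(n,k)$ family of transmission eigenfunctions would not be complete in $H^2_0(D)$, so your framework, which fixes the index first, does not have access to this argument.
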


\noindent
Theorem \ref{thm_main_loc} gives the first examples of ITEFs where we can verify that 
localization occurs at a non-convex corners and thus going beyond the numerical results
in \cite{BLLW17,Liu20,CDHLW21,DLS21,DLWY21,DL24}.

\medskip
\noindent
We will also investigate the vanishing of ITEFs when the corner is convex, i.e. $\omega \in (0,\pi)$.
It appears that the localization of an ITEF is not possible at convex corners. In fact, they always seem to vanish instead.
For the convex corner case  we prove the following result, which is 
is a direct consequence of Proposition \ref{prop_simple_vanish}.

\begin{theorem} \label{thm_main_vanishing}
Let $\omega < \pi$, and let $D$ be domain of the form of \eqref{eq_def_D} with a corner point at $x_0$.
Assume that $n \in C^1(\overline D)$, $n>0$ and $n(0) \neq 1$, and let $v,w \in H^1(D)$ be solutions of \eqref{eq_ite}, 
Then $w(x_0) = 0$ in the following sense:
\[
  \lim_{\varepsilon \to 0} \frac{1}{|C_{\varepsilon}|} \int_{C_\varepsilon} w(x) dx = 0,
\]
where $C_{\varepsilon} = B(x_0,\varepsilon) \cap D$. The same applies also to $v$.
\end{theorem}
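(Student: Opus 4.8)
The plan is to prove Theorem~\ref{thm_main_vanishing} by deriving a local integral identity near the corner and extracting the vanishing of the average from it.

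The plan is to reduce the two transmission equations to a single inhomogeneous Helmholtz equation for the difference, and then to test that equation against a concentrating complex-exponential harmonic function, using convexity in an essential way. Set $x_0 = 0$ and put $u = v - w$. By the reformulation of the boundary conditions recalled in the introduction, $u \in H^2_0(D)$. Subtracting the two equations in \eqref{eq_ite} and using $\Delta w = -k^2 n w$ gives
\begin{equation*}
(\Delta + k^2) u = k^2 (n-1) w \quad \text{in } D.
\end{equation*}
Since $n \in C^1(\overline D)$ and $n(0) \neq 1$, there is $c > 0$ with $|n-1| \geq c$ on $C_\varepsilon$ for all small $\varepsilon$, so controlling the source is equivalent to controlling $w$. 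The goal is to extract $\frac{1}{|C_\varepsilon|}\int_{C_\varepsilon} w \, dx$ from this identity.

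The basic tool is Green's identity on $C_\varepsilon$ against a harmonic function $u_0$,
\begin{equation*}
k^2 \int_{C_\varepsilon} (n-1) w \, u_0 \, dx - k^2 \int_{C_\varepsilon} u \, u_0 \, dx = \int_{\Gamma_\varepsilon} \big( \partial_\nu u \, u_0 - u \, \partial_\nu u_0 \big) \, dS,
\end{equation*}
where $\Gamma_\varepsilon = \partial B(0,\varepsilon) \cap D$; the contribution of $\partial D \cap B(0,\varepsilon)$ drops out because $u \in H^2_0(D)$ forces $u = \partial_\nu u = 0$ there. Taking $u_0 \equiv 1$ merely reproduces $\int_{C_\varepsilon}\Delta u$ and is circular, so I would instead take $u_0 = e^{\rho \cdot x}$ with $\rho \in \C^2$, $\rho \cdot \rho = 0$, chosen so that $|u_0| = e^{-\tau x_1}$ after rotating the tangent sector into the right half-plane. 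Here convexity is decisive: since $\omega < \pi$, the open sector tangent to $D$ at the corner lies strictly inside $\{x_1 > 0\}$ away from the vertex, so $u_0$ decays into $D$ and concentrates at the corner as $\tau \to \infty$.

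One then analyses the three terms as $\tau \to \infty$ for fixed small $\varepsilon$. Writing $\mu_\tau = \int_{C_\varepsilon} u_0 \, dx \sim c_0(\omega)\,\tau^{-2}$, the source term on the left behaves like $k^2(n(0)-1)\,\mu_\tau$ times a $u_0$-weighted average of $w$ near the corner. The term $\int_{C_\varepsilon} u \, u_0 \, dx$ is of lower order: $u \in H^2_0(D) \hookrightarrow C^{0,\alpha}(\overline D)$ with $u(0) = 0$ gives $|u(x)| \leq C|x|^\alpha$, whence $\int_{C_\varepsilon} |u|\,|u_0| \, dx \lesssim \tau^{-2-\alpha} = o(\mu_\tau)$. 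The boundary term over $\Gamma_\varepsilon$ is exponentially small, since $|u_0|, |\partial_\nu u_0| \lesssim \tau e^{-c\tau\varepsilon}$ on the arc. Dividing by $\mu_\tau$ and letting $\tau \to \infty$ forces the $u_0$-weighted corner average of $w$ to vanish, and transferring this to the uniform average yields $\frac{1}{|C_\varepsilon|}\int_{C_\varepsilon} w \, dx \to 0$. The claim for $v$ is then immediate from $v = u + w$, since the average of $u$ tends to $u(0) = 0$ by continuity and the average of $w$ tends to $0$.

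The main obstacle is the low regularity. Because $w \in H^1(D)$ has no pointwise trace at the single point $x_0$, the delicate step is to make rigorous the identification of the $u_0$-weighted average with the uniform average $\frac{1}{|C_\varepsilon|}\int_{C_\varepsilon} w$, and to show this tends to $0$ while coupling the two limits $\tau \to \infty$ and $\varepsilon \to 0$. This requires quantitative control of the remainder in $\int_{C_\varepsilon} w \, u_0 \, dx \approx (\text{average}) \cdot \mu_\tau$ in terms of $\|w\|_{H^1(C_\varepsilon)}$ alone, together with verifying $c_0(\omega) \neq 0$ for the admissible angles. The hypothesis $n \in C^1$ enters precisely here, in estimating $\int_{C_\varepsilon}(n - n(0)) w \, u_0 \, dx$ as a lower-order term via $|n(x) - n(0)| \leq C|x|$.
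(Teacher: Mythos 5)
Your reduction to $(\Delta + k^2)u = k^2(n-1)w$ with $u = v-w \in H^2_0(D)$ matches the paper's starting point, but from there you take the classical CGO/corner-scattering route (testing against $u_0 = e^{\rho\cdot x}$, $\rho\cdot\rho=0$), and this route has a genuine gap under the hypothesis $w \in H^1(D)$ only. The quantity your identity controls is $\int_{C_\varepsilon} w\,u_0\,dx$ with a \emph{complex, oscillatory} weight $e^{-\tau x_1}e^{i\tau x_2}$ concentrating at scale $1/\tau$; what the theorem asks for is the uniform average over $C_\varepsilon$ at scale $\varepsilon$. For merely $H^1$ functions in two dimensions (which need not be continuous at $x_0$), the implication ``$u_0$-weighted averages tend to $0$'' $\Rightarrow$ ``uniform averages tend to $0$'' is false in general and must be supplied by additional regularity of $w$ near the vertex. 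You flag this identification as ``the delicate step'' but do not supply it, and the two limits cannot be decoupled: the boundary term on $\Gamma_\varepsilon$ is only negligible when $\tau\varepsilon \to \infty$, whereas matching $u_0$ to the constant weight $1$ would require $\tau\varepsilon \to 0$, in which regime the $\Gamma_\varepsilon$-term is no longer controlled by $\|u\|_{H^2}$. Note also that your error term $\int_{C_\varepsilon}(n-n(0))w\,u_0\,dx$ is $O(\|w\|_{L^2(C_\varepsilon)}\tau^{-2})$, i.e.\ of the \emph{same} order as $\mu_\tau \sim \tau^{-2}$, so even the weighted-average conclusion only holds in the iterated limit $\lim_{\varepsilon\to 0}\limsup_{\tau\to\infty}$ — again short of the claim. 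In essence, your argument is the one from \cite{BL17a,BL17b,PSV17}, which works when $w$ is H\"older (or $H^2$) near the corner; that is exactly the extra hypothesis this theorem is designed to remove.

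The paper's proof goes a different way and is worth comparing against. It differentiates the equation: $u_j := \p_j u$ satisfies $(\Delta + k^2)u_j = k^2\p_j((n-1)w) \in L^2$ (here $n \in C^1$ and $w \in H^1$ are used), and $\chi u_j$ has zero Dirichlet trace because $u \in H^2_0(D)$ forces $\nabla u|_{\p D} = 0$. Then the weighted elliptic regularity for the Dirichlet Laplacian at a convex corner (Proposition \ref{prop_V_Dirichlet} with $\beta = 0$, valid precisely because $\omega < \pi$) gives $\chi u_j \in V^2_0(D)$, hence $r^{-1}\Delta u \in L^2$ near the vertex, hence $r^{-1}w = r^{-1}(k^2 n - k^2)^{-1}(\Delta + k^2)u \in L^2$ near the vertex using $n(x_0)\neq 1$. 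The average statement then follows from Cauchy--Schwarz and dominated convergence. If you want to salvage your approach, you would need to first establish some quantitative decay of $w$ near the corner of exactly this kind — at which point the CGO machinery becomes unnecessary.
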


\noindent
Theorem \ref{thm_main_vanishing} improves on the results in \cite{BL17a,BL17b,Bla18,DLWY21}. In 
\cite{BL17a,BL17b,Bla18} it is assumed that the ITEFs $v$ and $w$ are in $H^2(D)$ (or H\"older) near the corner point. \cite{DLWY21} improves the approximation property of \cite{BL17a,BL17b}.
Theorem \ref{thm_main_vanishing} only assumes that
$v,w \in H^1(D)$. The question for $L^2(D)$-regularity remains open.

\medskip
\noindent
Let us briefly discuss the ideas behind the proof of Theorem \ref{thm_main_loc}.
Our main tool is the theory of elliptic PDE in corner domains. 
This theory goes back to the pioneering work by Kondratiev \cite{Ko67} with later
developments made by several authors, see e.g. \cite{Gr85,Gr92,NP94,MNP00}.
We will in particular use this type of results for the biharmonic equation 
which are obtained by Grisvard in \cite{Gr85} and \cite{Gr92}. 

We firstly augment the biharmonic theory in \cite{Gr85} and \cite{Gr92}
by deriving a condition that guarantees the appearance of a singular term
in the corner asymptotics derived in \cite{Gr85} and \cite{Gr92}. This analysis is carried out in 
section \ref{sec_a_condition_for_sing}.
Our main strategy is then to formulate the ITEV problem in the constant coefficients case.
This problem can be formulated as a fourth order problem that resembles a buckling type problem.
We then study an auxiliary eigenvalue problem related to the buckling type problem, see
subsections \ref{sec_buckling} and \ref{sec_auxiliary}.
This gives us  a family of eigenfunctions $ \{  \psi_j  \}$ of the 4th order problem that is complete in $H^2_0(D)$.
The eigenfunctions $\psi_j$ can be converted  into ITEFs in a natural way.
The completeness of the family $\{\psi_j\}$, and  the criterion for the occurrence of a singular 
term for a biharmonic source problem of Proposition \ref{prop_sing_cond_D} 
allows us then to find at least one $j$ for which $\Delta \psi_j$ becomes singular, see Lemma
\ref{lem_f_sing}.
This in turn translates to an ITEF that becomes singular.

We like point out that proving localization at a corner
involves some degree of difficulty since unboundedness of the ITEF always implies that the eigenfunction cannot come from a scattering solution due to elliptic regularity in $\R^2$. This means that none of the results such as those in \cite{BPS14,BP22,CV23,EH18,PSV17} can say anything about unbounded ITEFs.
In fact our results imply that $L^2$ interior transmission eigenfunctions forms a much more exotic class of functions than incident waves or total waves, something which was known previously only due to the existence of always scattering potentials that have ITEFs \cite{BPS14}.
We further discuss 
the connection to corner scattering in section \ref{sec_corner_scattering}.

\medskip
\noindent
The paper is structured as follows in section \ref{sec_pre} we review the results in corner asymptotics that we will
using in the later sections. In section \ref{sec_a_condition_for_sing} we derive a condition for 
the apperance of a certain singular term in the asymptotic expansion of solutions to the biharmonic equation.
In section \ref{sec_sing_ITEF} and its  subsections we construct a singular ITEF and prove Theorem \ref{thm_main_loc}.
Section \ref{sec_vanish} then shortly discuss the vanishing in convex corners and prove Theorem \ref{thm_main_vanishing}.
In the last section \ref{sec_corner_scattering} we discuss the connection to corner scattering and 
mention some open problems.

\section{Preliminaries} \label{sec_pre}

\noindent
In the following we will study bounded corner domains $D \subset \R^2$ satisfying
\begin{equation} \label{eq_def_D}
\begin{aligned}
D \cap B(0, \eps) = \mathcal C_\omega \cap B(0,\eps), \qquad  \p D \setminus \{ 0 \} \text{ is smooth },
\end{aligned}
\end{equation}
for some $\eps >0$  
where $\mathcal C_\omega$ is the cone with the opening angle $\omega \in (0,2\pi) \setminus \{\pi\}$ given by
\begin{equation} \label{eq_C_omega}
\begin{aligned}
\mathcal{C}_\omega := \{ (r,\theta) \in \R^2 \,:\, \theta \in (0, \omega), \, r>0 \}.
\end{aligned}
\end{equation}
See figure \ref{fig_non_convex} for an illustration
of a possible $D$.

\medskip
\noindent
We will need certain weighted Sobolev spaces
that are well adapted to study the asymptotics of solutions at corners, and 
which we will define next.
Let $\Omega \subset \R^2$ be a bounded Lipschitz domain.
We will consider the  weighted Sobolev spaces $V^\ell_\beta(\Omega) \subset L^2(\Omega)$. 
The norm specifying $V^\ell_\beta(\Omega)$, takes the weighted $L^2$-norms of derivatives up to order $\ell$, i.e.
\begin{equation} \label{eq_V_l_b}
\begin{aligned}
\| u \|^2_{V^\ell_\beta( \Omega)} := 
\sum_{|\alpha| \leq \ell} \big\| r^{\beta-\ell+|\alpha|} \p^\alpha_x u  \big\|^2_{L^2( \Omega)}.
\end{aligned}
\end{equation}
See also p. 27 in \cite{NP94}.
It will be convenient to also characterize the $V^\ell_\beta(\Omega)$-norm in polar coordinates.
This reveals that the norm considers $r^{-1}$ and $\p_r$ as somewhat equivalent.

\begin{lemma} \label{lem_V_polar}
We have that $u \in V^\ell_\beta(\Omega)$ if and only if 
$$
r^{\beta - \ell + j}|\p^j_r\p^k_\theta u(r,\theta) | \in L^2(\Omega), \quad \text{ for } \quad j + k  \leq \ell.
$$
\end{lemma}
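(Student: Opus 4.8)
The plan is to reduce both conditions to a common ``scaling-adapted'' one phrased in terms of the operators $r\p_r$ and $\p_\theta$, which are the natural first-order operators for the homogeneity structure at the corner: each Cartesian derivative lowers homogeneity by one (matching $\p_r$ and $r^{-1}\p_\theta$), whereas $r\p_r$ and $\p_\theta$ preserve the weight. Away from the vertex polar coordinates are smooth, and the chain rule gives
\[
  \p_{x_1} = r^{-1}\big(\cos\theta\,(r\p_r) - \sin\theta\,\p_\theta\big), \qquad
  \p_{x_2} = r^{-1}\big(\sin\theta\,(r\p_r) + \cos\theta\,\p_\theta\big),
\]
together with the inverse relations $r\p_r = x_1\p_{x_1}+x_2\p_{x_2}$ and $\p_\theta = -x_2\p_{x_1}+x_1\p_{x_2}$. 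The key structural claim I would establish is that for every multi-index $\alpha$ with $|\alpha|=m$ one has, on $\Omega\setminus\{0\}$,
\[
  \p^\alpha_x = r^{-m}\!\!\sum_{p+q\le m} c^\alpha_{pq}(\theta)\,(r\p_r)^p\p_\theta^q,
\]
with $c^\alpha_{pq}$ trigonometric polynomials (hence bounded), and conversely
\[
  (r\p_r)^p\p_\theta^q = r^{p+q}\!\!\sum_{|\alpha|\le p+q}\tilde c^{\,pq}_\alpha(\theta)\,\p^\alpha_x .
\]

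Both identities I would prove by induction on the order. The one-step computation uses only the displayed chain-rule formulas and the fact that the coefficients depend on $\theta$ alone, so that $r\p_r$ passes through them while $(r\p_r)(r^{-m})=-m\,r^{-m}$ and $\p_\theta(r^{-m})=0$; each step raises $p+q$ by at most one and keeps the coefficients trigonometric. Conceptually this is cleanest in the variable $t=\log r$, where $r\p_r=\p_t$ and $\p_\theta$ have constant coefficients and each $\p_{x_i}=e^{-t}M_i(\theta,\p_t,\p_\theta)$ has $t$-independent coefficients, so composing $m$ of them produces $e^{-mt}=r^{-m}$ times an order-$m$ operator in $\p_t,\p_\theta$. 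Finally, since $r^j\p_r^j=\prod_{i=0}^{j-1}(r\p_r-i)$ and, inversely, $(r\p_r)^j=\sum_{j'\le j}S(j,j')\,r^{j'}\p_r^{j'}$ with integer coefficients, the families $\{\,r^{\beta-\ell+j}\p_r^j\p_\theta^k\,\}$ and $\{\,r^{\beta-\ell}(r\p_r)^j\p_\theta^k\,\}$ are related by a triangular, $r$-independent change of basis; hence the condition in the lemma is equivalent to $r^{\beta-\ell}(r\p_r)^j\p_\theta^k u\in L^2(\Omega)$ for $j+k\le\ell$.

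With these identities the norm equivalence follows quickly. For the implication ``$V^\ell_\beta\Rightarrow$ polar'', multiply the second identity by $r^{\beta-\ell}$ to obtain $r^{\beta-\ell}(r\p_r)^p\p_\theta^q u=\sum_{|\alpha|\le p+q}\tilde c^{\,pq}_\alpha\,r^{(p+q)-|\alpha|}\big(r^{\beta-\ell+|\alpha|}\p^\alpha_x u\big)$; since $\Omega$ is bounded we have $r\le R$, and as $(p+q)-|\alpha|\ge0$ each factor $r^{(p+q)-|\alpha|}$ is bounded, so the right-hand side is controlled by $\|u\|_{V^\ell_\beta}$. For the reverse implication, multiply the first identity by $r^{\beta-\ell+|\alpha|}$ to get $r^{\beta-\ell+|\alpha|}\p^\alpha_x u=\sum_{p+q\le|\alpha|}c^\alpha_{pq}(\theta)\,r^{\beta-\ell}(r\p_r)^p\p_\theta^q u$, which is bounded directly by the polar quantities because the coefficients are bounded and no extra power of $r$ appears. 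Summing over $\alpha$, respectively over $(j,k)$, yields the two-sided estimate and hence the stated equivalence.

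The only genuinely delicate point, and the step I would treat most carefully, is that the structural identities hold a priori only on $\Omega\setminus\{0\}$ and as identities between classical derivatives. To conclude for $u\in V^\ell_\beta(\Omega)$ I would first verify the manipulations on smooth functions and then pass to the limit by density, or equivalently argue directly with weak derivatives, using that $\{0\}$ is a null set carrying no distributional mass for the $L^2$ classes involved. The asymmetric use of the boundedness of $\Omega$ above is precisely what accounts for the exponent $j$ (rather than $j+k$) appearing in the weight of the lemma.
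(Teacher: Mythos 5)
Your proposal follows the same route as the paper, whose entire proof is the sentence ``the claim follows by a direct computation using polar coordinates'': you have simply carried out that computation, organizing it around the weight-preserving operators $r\p_r$ and $\p_\theta$ and the triangular, $r$-independent (Stirling-number) change of basis between $\{(r\p_r)^p\}$ and $\{r^{j}\p_r^{j}\}$. The overall structure and the conclusion are correct.

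One intermediate identity is misstated, though harmlessly. It is not true that $(r\p_r)^p\p_\theta^q = r^{p+q}\sum_{|\alpha|\le p+q}\tilde c^{\,pq}_\alpha(\theta)\,\p^\alpha_x$: already $(r\p_r)^2 = x_1^2\p_1^2+2x_1x_2\p_1\p_2+x_2^2\p_2^2+x_1\p_1+x_2\p_2$ contains the first-order piece $r(\cos\theta\,\p_1+\sin\theta\,\p_2)$, which carries $r^1$ rather than $r^2$. The correct statement --- and the one your $t=\log r$ induction actually produces, since each composition with $e^{t}\p_{x_i}$ generates lower-order Cartesian terms with correspondingly lower powers of $r$ --- is $(r\p_r)^p\p_\theta^q=\sum_{|\alpha|\le p+q}\tilde c^{\,pq}_\alpha(\theta)\,r^{|\alpha|}\p^\alpha_x$, with the power of $r$ matching $|\alpha|$ term by term. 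With this correction the factor $r^{(p+q)-|\alpha|}$ in your estimate disappears entirely, so $\lVert r^{\beta-\ell}(r\p_r)^p\p_\theta^q u\rVert_{L^2}\le C\lVert u\rVert_{V^\ell_\beta}$ holds with no appeal to the boundedness of $\Omega$; consequently your closing remark attributing the exponent $j$ (rather than $j+k$) to that boundedness is not the right explanation --- the exponent is forced by homogeneity alone, and the equivalence is two-sided with constants independent of the diameter. None of this affects the validity of your conclusion, and your handling of the puncture at the origin (density plus removability of a point for the weak derivatives involved) is the appropriate way to finish.
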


\begin{proof}
The claim follows by a direct computation using polar coordinates.
\end{proof}

\noindent
The following can be found in \cite{Gr85} see Theorem 7.2.1.1. 

\begin{lemma} \label{lem_H20_in_V20}
If $w \in H^2_0(\mathcal C)$, then $w \in V^2_0(\mathcal C)$.
\end{lemma}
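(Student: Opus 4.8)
The plan is to reduce the statement to an \emph{a priori} weighted estimate on the dense subspace $C_c^\infty(\mathcal C)$. Since $H^2_0(\mathcal C)$ is by definition the closure of $C_c^\infty(\mathcal C)$ in the $H^2$-norm, and since $V^2_0(\mathcal C)$ is complete, it suffices to establish the bound
\[
\|w\|_{V^2_0(\mathcal C)}\;\le\; C\,\|w\|_{H^2(\mathcal C)},\qquad w\in C_c^\infty(\mathcal C),
\]
with $C=C(\omega)$, and then identify the limit of an approximating sequence with $w$. Unwinding \eqref{eq_V_l_b} with $\ell=2$, $\beta=0$, the squared $V^2_0$-norm is $\|r^{-2}w\|_{L^2}^2+\|r^{-1}\nabla w\|_{L^2}^2+\|\nabla^2 w\|_{L^2}^2$. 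The top-order term is already controlled by $\|w\|_{H^2}$, so everything reduces to the two weighted Hardy estimates $\|r^{-1}\nabla w\|_{L^2}\le C\|\nabla^2 w\|_{L^2}$ and $\|r^{-2}w\|_{L^2}\le\|r^{-1}\nabla w\|_{L^2}$.

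For the first estimate I would use that every $w\in C_c^\infty(\mathcal C)$ vanishes on the two bounding rays $\theta=0,\omega$, so for each fixed $r$ the slice $\theta\mapsto w(r,\theta)$ lies in $H^1_0(0,\omega)$. The one–dimensional Poincaré inequality on the interval, whose sharp constant is the reciprocal of the first Dirichlet eigenvalue $(\pi/\omega)^2$, gives $\int_0^\omega|u|^2\,d\theta\le(\omega/\pi)^2\int_0^\omega|\p_\theta u|^2\,d\theta$; multiplying by $r^{-1}$ and integrating in $r$, and using $r^{-2}|\p_\theta u|^2\le|\nabla u|^2$ in polar coordinates, yields
\[
\|r^{-1}u\|_{L^2(\mathcal C)}\;\le\;\frac{\omega}{\pi}\,\|\nabla u\|_{L^2(\mathcal C)},\qquad u\in C_c^\infty(\mathcal C).
\]
Applying this with $u=\p_j w$ (again smooth and compactly supported in $\mathcal C$) and summing over $j$ gives $\|r^{-1}\nabla w\|_{L^2}\le(\omega/\pi)\|\nabla^2 w\|_{L^2}$.

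For the second estimate I would integrate by parts against a radial vector field instead of separating variables. In $\R^2$ one has the pointwise identity $\nabla\cdot(x\,r^{-4})=-2\,r^{-4}$, so that, since $w$ and its support stay away from the vertex and the rays,
\[
\int_{\mathcal C}|w|^2 r^{-4}\,dx=\tfrac12\int_{\mathcal C} x\cdot\nabla\!\big(|w|^2\big)\,r^{-4}\,dx=\Re\int_{\mathcal C}\overline{w}\,(x\cdot\nabla w)\,r^{-4}\,dx .
\]
Bounding $|x\cdot\nabla w|\le r|\nabla w|$ and applying Cauchy–Schwarz to the pairing of $r^{-2}w$ with $r^{-1}\nabla w$ gives $\|r^{-2}w\|_{L^2}^2\le\|r^{-2}w\|_{L^2}\,\|r^{-1}\nabla w\|_{L^2}$, hence $\|r^{-2}w\|_{L^2}\le\|r^{-1}\nabla w\|_{L^2}$. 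Chaining the two estimates produces $\|r^{-2}w\|_{L^2}\le(\omega/\pi)\|\nabla^2 w\|_{L^2}$ and thus the claimed bound with $C=(1+2\omega^2/\pi^2)^{1/2}$.

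The only genuinely delicate point is the passage to the limit, and it is precisely where the boundary hypothesis matters: each integration by parts above discards boundary contributions on $\p\mathcal C$ and at the vertex, and these vanish for $C_c^\infty(\mathcal C)$ but not for a general $H^2(\mathcal C)$ function — this is exactly why one needs $w\in H^2_0$ rather than merely $w\in H^2$. Once the \emph{a priori} estimate holds on the dense class, completeness of $V^2_0(\mathcal C)$ together with the fact that $V^2_0$-convergence implies $L^2_{\mathrm{loc}}$-convergence lets me identify the $V^2_0$-limit of an approximating sequence with $w$ itself, which finishes the proof. I note that the argument is uniform and uses neither $\omega\neq\pi$ nor convexity, so it covers the whole admissible range $\omega\in(0,2\pi)$.
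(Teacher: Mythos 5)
Your argument is correct, and it is genuinely different in character from what the paper does: the paper disposes of this lemma by citing Grisvard (Theorem 7.2.1.1 in \cite{Gr85}), whereas you give a self-contained proof by density plus two Hardy-type inequalities. Your two estimates both check out: the divergence identity $\nabla\cdot(x\,r^{-4})=-2r^{-4}$ holds in $\R^2$, the integration by parts produces no boundary terms for $C_c^\infty(\mathcal C_\omega)$ functions, and the angular Poincar\'e step correctly exploits that the first Dirichlet eigenvalue of $-\p_\theta^2$ on $(0,\omega)$ is $(\pi/\omega)^2$ --- which is exactly the point that rescues the inequality $\|r^{-1}u\|_{L^2}\le C\|\nabla u\|_{L^2}$ on a proper sector even though it fails on all of $\R^2$. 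The passage to the limit via completeness of $V^2_0$ and identification of the limit in $L^2_{loc}$ is also sound, and your observation that the argument works for every $\omega\in(0,2\pi)$ with the explicit constant $(1+2\omega^2/\pi^2)^{1/2}$ is a small bonus over the bare citation. What the citation buys the paper is brevity and the fact that Grisvard's framework already supplies the companion regularity and expansion results (Propositions \ref{prop_Biharmonic_asymptotics} and \ref{prop_singular_expansion_D}) used later; what your proof buys is transparency about exactly which structural features of the cone (the Dirichlet condition on the two bounding rays and the compact support away from the vertex) make the weighted embedding true. Only one presentational nit: you invoke the estimate for $w$ itself via the chain $\|r^{-2}w\|\le\|r^{-1}\nabla w\|\le(\omega/\pi)\|\nabla^2 w\|$, and it is worth saying explicitly that the first-order Poincar\'e step is applied to each Cartesian derivative $\p_j w$, which is again in $C_c^\infty(\mathcal C_\omega)$ --- you do say this, so the proof is complete as written.
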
	

\noindent
Our main tool in studying the localization of ITEFs in section \ref{sec_sing_ITEF} will be the following result 
describing the corner asymptotics of a solution to the biharmonic source problem in an cone.
The following proposition is a direct consequence of Theorem 3.2.10 in \cite{Gr92} and
Theorem 7.2.1.12 \cite{Gr85}. 
(See also Theorem 3.4.1 in \cite{Gr92} and the discussion after Theorem 7.2.1.12 in \cite{Gr85}.)

\begin{proposition} \label{prop_Biharmonic_asymptotics}
Suppose $u \in V^2_0(\mathcal C_\omega)$ is of bounded support and solves
\begin{equation} \label{eq_biharmonic_with_source}
\begin{aligned}
\left\{ 
  \begin{aligned}
    \Delta^2 u &= f, \quad && \text{in } \mathcal C_\omega,\\
    \p^j_\nu u |_{\p \mathcal C_\omega} &= 0, \quad && j=0,1,
  \end{aligned}
\right.
\end{aligned}
\end{equation}
where $f \in V^0_0(\mathcal C_\omega)$. 
Assume that $\omega \in (0,2\pi)$ is such that $\tan \omega \neq \omega$, and that the equation
\begin{equation} \label{eq_char_eq}
\begin{aligned}
\sin^2(z \omega) = z^2 \sin(\omega),
\end{aligned}
\end{equation}
has no solution $z \in \C$ with $\Re z = 2$. Then $u$ is of the form
$$
u = u_R + u_S, \qquad u_R \in V_0^{4}(\mathcal C_\omega),
$$
where
\begin{equation} \label{eq_sing_expansion}
\begin{aligned}
u_S(r,\theta) = 
\sum_{m=1}^{M'} c_m \varphi_m(\theta) r^{1 + z'_m} 
+ 
\sum_{m=1}^{M''}  [ c'_m \phi_m(\theta) +  c''_m\ln r \varphi_m(\theta)] r^{1 + z''_m},
\end{aligned}
\end{equation}
and $z'_m$ and $z''_m$ are respectively simple and double roots  of \eqref{eq_char_eq}  
with $ \Re z'_m, \Re z''_m \in  (0,2)$, and $\phi_m,\varphi_m \in C^\infty(0,\omega)\cap H^2_0(0,\omega)$
are normalized as in \eqref{eq_normalization}, and $c_m,c'_m,c''_m \in \C.$
\end{proposition}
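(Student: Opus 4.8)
The plan is to read \eqref{eq_biharmonic_with_source} as an elliptic boundary value problem for the clamped biharmonic operator on the infinite wedge $\mathcal C_\omega$, and to invoke the Kondratiev--Grisvard theory of such problems in the weighted spaces $V^\ell_\beta$, exactly in the form of the theorems of \cite{Gr92,Gr85} quoted in the statement. The genuine work is not the asymptotic machinery itself but three bookkeeping tasks: computing the associated operator pencil, identifying which of its eigenvalues produce the singular terms $u_S$, and checking that the hypotheses of the cited theorems are met by the particular pair of spaces $V^2_0 \to V^4_0$ at weight $\beta = 0$.

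First I would compute the operator pencil. Substituting the ansatz $u = r^{1+z}\varphi(\theta)$ into $\Delta^2 u = 0$ and using $\Delta(r^\lambda \varphi) = r^{\lambda-2}(\lambda^2 \varphi + \varphi'')$ twice reduces the equation to the fourth-order ODE
\begin{equation*}
\varphi'''' + \big[(1+z)^2 + (z-1)^2\big]\varphi'' + (1+z)^2(z-1)^2\varphi = 0,
\end{equation*}
whose characteristic exponents are $\pm i(1+z)$ and $\pm i(z-1)$. Hence, away from the exceptional values $z \in \{-1,0,1\}$ where two exponents coincide, $\varphi$ is a combination of $\cos((1+z)\theta)$, $\sin((1+z)\theta)$, $\cos((z-1)\theta)$, $\sin((z-1)\theta)$. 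The clamped conditions $\partial^j_\nu u|_{\partial \mathcal C_\omega} = 0$ become $\varphi(0)=\varphi'(0)=\varphi(\omega)=\varphi'(\omega)=0$, a $4\times 4$ homogeneous system whose determinant, after simplification, reduces to \eqref{eq_char_eq}. Thus the spectrum of the pencil is the root set of \eqref{eq_char_eq}, the eigenfunctions $\varphi_m$ being the corresponding nontrivial $\varphi$ (smooth on $(0,\omega)$ and in $H^2_0(0,\omega)$ by construction), while a double root additionally produces a generalised eigenfunction $\phi_m$ and the accompanying $\ln r$ term.

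Next I would pin down the strip of exponents responsible for $u_S$. By the polar characterisation of Lemma \ref{lem_V_polar}, a monomial $r^{1+z}\varphi(\theta)$ with $\varphi$ bounded satisfies $r^{\beta-\ell+j}\partial_r^j \partial_\theta^k(r^{1+z}\varphi) \sim r^{\beta-\ell+1+z}\varphi^{(k)}$, so its membership in $V^\ell_\beta(\mathcal C_\omega)$ near the vertex is governed by the single integrability condition $\Re z > \ell - \beta - 2$. With $\beta = 0$ this gives $\Re z > 0$ for $V^2_0$ and $\Re z > 2$ for $V^4_0$. Consequently the terms that lie in $V^2_0$ but obstruct the improved regularity $V^4_0$ are exactly those with $\Re z \in (0,2)$, the range quoted in \eqref{eq_sing_expansion}, and the two relevant weight lines are $\Re z = 0$ and $\Re z = 2$. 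Feeding this into the regularity-shift statements of \cite{Gr85,Gr92}, the solution $u \in V^2_0(\mathcal C_\omega)$ of bounded support with $f \in V^0_0(\mathcal C_\omega)$ decomposes as $u = u_R + u_S$ with $u_R \in V^4_0(\mathcal C_\omega)$ and $u_S$ the finite sum, over the roots of \eqref{eq_char_eq} in the open strip $0 < \Re z < 2$, of the associated eigen- and generalised-eigenfunctions, a logarithm attached to each double root; this is precisely \eqref{eq_sing_expansion}.

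Finally I would verify the two line conditions, and this is where I expect the delicate point. Freedom of the upper line is exactly the stated hypothesis that \eqref{eq_char_eq} has no root with $\Re z = 2$, which is what makes $\Delta^2$ admissible between the relevant spaces. On the lower line $\Re z = 0$ the value $z = 0$ always solves \eqref{eq_char_eq}, but the corresponding term $r^{1}\varphi$ fails to lie in $V^2_0$ (the defining integral diverges only logarithmically), so it, and indeed any contribution from $\Re z \leq 0$, is excluded by the hypothesis $u \in V^2_0$ and does not enter the splitting; hence the lower line imposes no extra constraint. The remaining hypothesis $\tan\omega \neq \omega$ I expect to serve a separate purpose: it is the standing assumption of Grisvard's theorem that rules out the exceptional opening angle at which the interior exponent $\lambda = 2$ (that is, $z = 1$) becomes a \emph{repeated} root of \eqref{eq_char_eq}, a degeneracy at which the classification of \eqref{eq_sing_expansion} into simple and double roots would require modification. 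The main obstacle is therefore the careful matching of the present spaces $V^2_0,\, V^4_0$ and their weight lines to the hypotheses in \cite{Gr85,Gr92}, and in particular the treatment of the borderline root $z = 0$ sitting on the line $\Re z = 0$.
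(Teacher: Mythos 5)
Your proposal is correct and takes essentially the same route as the paper, which gives no proof at all beyond citing Theorem 3.2.10 of \cite{Gr92} and Theorem 7.2.1.12 of \cite{Gr85}; your pencil computation reproduces exactly the ODE recorded in Remark \ref{rem_ODE}, and your weight-line bookkeeping ($\Re z>0$ for $V^2_0$, $\Re z>2$ for $V^4_0$) correctly identifies the strip $(0,2)$ and the role of the line conditions. Your reading of the hypothesis $\tan\omega\neq\omega$ is also the right one: it is precisely the condition under which $z=1$ (always a root of the characteristic equation) would degenerate into a double root, which would alter the form of \eqref{eq_sing_expansion}.
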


\begin{remark} \label{rem_ODE}
The functions $\varphi_m$ and $\phi_m$ in Proposition \ref{prop_Biharmonic_asymptotics} are normalized by
\begin{equation} \label{eq_normalization}
\begin{aligned}
  \| \varphi_m \|_{L^2(0,\,\omega)},\;\| \phi_m \|_{L^2(0,\,\omega)} = 1,
\end{aligned}
\end{equation}
and they can be described explicitly as solutions to certain ODEs
(see Theorem 3.4.1 and Theorem 3.2.10 in \cite{Gr92}). The functions $\varphi_m \in H^2_0(0,\omega)$ 
solve the ordinary differential equations
$$
\varphi'''' + 2(1+z^2) \varphi''  + (z^4-2z^2+1) \varphi = 0, 
$$
where $z = z'_m$.
The functions $\phi_m \in H^2_0(0,\omega)$, solve in turn the ODE
$$
 \phi''''_m + 2(1+z^2) \phi''_m  + (z^4-2z^2+1) \phi_m = 
-4(z^3 - z)\varphi_m +  4 z_m \varphi''_m,
$$
where $z = z''_m$.
\end{remark}

\noindent
\begin{remark} \label{rem_constant_non_zero}
Note that Proposition \ref{prop_Biharmonic_asymptotics} gives a  singular expansion near the vertex of the cone
to a solution $u$ of the biharmonic source problem, but it does not give a condition when the singular terms $c_mr^{1+z'_m}\varphi_m$, 
$c'_m r^{1+z''_m}\phi_m$
and $c''_m r^{1+z''_m} \ln(r) \varphi_m$ are non-zero. The constants $c_m,c'_m,c''_m$ could be zero.
We will need such a condition and  deal with this in section \ref{sec_a_condition_for_sing}.
\end{remark}

\medskip
\noindent
We shall also --- in section \ref{sec_vanish} --- study the vanishing of ITEFs at convex corners. For this 
we will use the corner singularity theory for the Laplace operator. 
Will need the following proposition, which is a variant of Theorem 3.1 p. 32 in \cite{NP94}.

\begin{proposition} \label{prop_V_Dirichlet}
Let $f \in V^0_\beta(D)$, and $|\beta - 1| < \frac{\pi}{\omega}$. Then there exists a unique solution $u \in V^2_\beta(D)$
to 
\begin{align*}
\left\{
  \begin{aligned}
    \Delta u &= f , \quad && \text{in } D\\
    u|_{\p D} &= 0, \quad &&
  \end{aligned}
  \right.
\end{align*}
and the estimate 
$$
\| u \|_{V^2_\beta (D )} \leq C \| f \|_{V^0_\beta (D )}
$$
holds.
\end{proposition}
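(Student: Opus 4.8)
The plan is to treat this as a standard Kondratiev-type solvability result for the Dirichlet Laplacian on a domain with a single conical point. The strategy is threefold: reduce to the model cone $\mathcal{C}_\omega$ near $0$ by a cut-off, solve the model problem by the Mellin transform, and glue the resulting corner solution to a solution on the smooth part of $D$, where ordinary elliptic theory applies. Since the statement is advertised as a variant of Theorem 3.1 in \cite{NP94}, the substance of the argument is to check that the weight hypothesis $|\beta-1| < \pi/\omega$ is precisely the condition placing $\Delta$ on a non-critical weight line, and then to invoke the cone machinery.

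First I would compute the indicial roots. Writing $u = r^\lambda \Phi(\theta)$ and applying $\Delta = \partial_r^2 + r^{-1}\partial_r + r^{-2}\partial_\theta^2$, the homogeneous equation $\Delta u = 0$ together with the Dirichlet condition $\Phi(0)=\Phi(\omega)=0$ reduces to $-\Phi'' = \lambda^2 \Phi$ on $(0,\omega)$, whose nontrivial solutions are $\Phi_k(\theta) = \sin(k\pi\theta/\omega)$ with $\lambda = \pm k\pi/\omega$, $k \in \N$. These $\pm k\pi/\omega$ are exactly the eigenvalues of the operator pencil $L(\lambda)$ obtained by Mellin-transforming the Laplacian in the radial variable. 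Next I would locate the weight line: a function homogeneous of degree $\lambda$ sits on the boundary of $V^2_\beta$ precisely when $\Re\lambda = 1-\beta$ (since $r^{\beta-2+|\alpha|}\partial^\alpha(r^\lambda\Phi) \sim r^{\beta-2+\lambda}$ is borderline $L^2$ in $2$D at $\Re\lambda = 1-\beta$). So the critical line for $\Delta : V^2_\beta \to V^0_\beta$ is $\{\Re\lambda = 1-\beta\}$. The hypothesis $|\beta-1| < \pi/\omega$ says exactly that $1-\beta \in (-\pi/\omega,\pi/\omega)$, i.e. this line lies strictly in the gap between the two eigenvalues $\pm\pi/\omega$ closest to the imaginary axis, and therefore carries no eigenvalue of $L(\lambda)$.

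With the line free of eigenvalues, $L(\lambda)$ is invertible for every $\lambda$ with $\Re\lambda = 1-\beta$, with operator norm bounded uniformly in $\Im\lambda$. I would then produce the model-cone solution by the inverse Mellin transform and read off the estimate $\|u\|_{V^2_\beta} \le C\|f\|_{V^0_\beta}$ from the Plancherel identity for the Mellin transform along this line; uniqueness on the cone follows from the triviality of $\ker L(\lambda)$ on the same line. To pass from $\mathcal{C}_\omega$ to $D$, I would fix a cut-off $\chi$ supported where $D = \mathcal{C}_\omega$, solve the cone problem for $\chi f$, solve the Dirichlet problem for $(1-\chi)f$ on the smooth remainder by ordinary $H^2$ theory (noting that away from $0$ the weight $r^\beta$ is bounded above and below, so $V^\ell_\beta$ and $H^\ell$ norms are equivalent there), and combine the two. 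The commutator $[\Delta,\chi]$ produces terms supported in a fixed annulus, which I would absorb by a Neumann/parametrix argument, obtaining a Fredholm operator of index zero whose kernel is trivial by the usual energy argument (in this weight range the homogeneous Dirichlet solutions lie in $H^1_0(D)$).

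The main obstacle I expect is this gluing step combined with the uniform invertibility of $L(\lambda)$ along the entire line $\Re\lambda = 1-\beta$: one must verify that the cut-off commutator terms are genuinely lower order in the $V^\ell_\beta$ scale so that they can be absorbed into the parametrix, and that the constant in the Mellin-side estimate stays bounded as $\Im\lambda \to \pm\infty$. Both points are routine within Kondratiev theory but are where the actual work resides; by contrast, the indicial-root computation and the weight-line check are essentially bookkeeping once the framework of \cite{NP94} is in place.
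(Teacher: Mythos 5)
Your proposal is correct: the paper itself gives no proof of this proposition, deferring entirely to Theorem 3.1, p.~32 of \cite{NP94}, and what you have written is precisely the standard Kondratiev-theory argument behind that citation --- the indicial roots $\pm k\pi/\omega$ with eigenfunctions $\sin(k\pi\theta/\omega)$, the identification of the critical weight line $\Re\lambda = 1-\beta$, the observation that $|\beta-1|<\pi/\omega$ places this line in the eigenvalue-free strip between $-\pi/\omega$ and $\pi/\omega$, and the Mellin/parametrix gluing. Since both routes rest on the same machinery, there is nothing further to compare.
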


\section{A condition for the appearance of a singular term} \label{sec_a_condition_for_sing}

\noindent 
In this section we further investigate the corner singularity
result in \cite{Gr92} for the biharmonic equation, which we stated in Proposition \ref{prop_Biharmonic_asymptotics}.
We  will need to expand on this result by finding a suitable 
criterion for when a specific singular term in $u_S$ is non-zero, and also find a way to apply it to 
the bounded domain $D$. For this we use similar ideas\footnote{
\cite{NP94} deals with the Laplacian case. The proof of the condition in the Laplacian case
uses the orthogonality of the $\varphi$ and $\phi$
which in that case are just sine functions.}
as in \cite{NP94} Chapter 2.

We will assume that $u \in V^2_0(\mathcal C_\omega)$ is of bounded support and solves
\begin{equation} \label{eq_biharmonic_with_source_SEC_condition}
\begin{aligned}
\begin{cases}
\Delta^2 u = f, \qquad \text{ in } \mathcal C_\omega,\\
\p^j_\nu u |_{\p \mathcal C_\omega} = 0, \quad j=0,1,
\end{cases}
\end{aligned}
\end{equation}
where $f \in V^0_0(\mathcal C_\omega)$. By Proposition \ref{prop_Biharmonic_asymptotics}
we can write $u=u_S + u_R$ as in \eqref{eq_sing_expansion}. We will now investigate the
singular part $u_S$ in more detail.

\medskip
\noindent
We shall focus on the term behaving worst, in terms of decay in $r$, in the expansion \eqref{eq_sing_expansion}.
To single out this term we will investigate the location in the complex plane, of the roots $z_k = z'_k,z''_k$, that appear in
the expansion \eqref{eq_sing_expansion}.
The following Lemma lets us characterize $z_1$, which gives us the worst behaving term in \eqref{eq_sing_expansion} in terms
of decay in $r$ at zero.
For a proof see \cite{Gr92} Lemma 3.3.2 p. 100.

\begin{lemma} \label{lem_z1_real}
The characteristic equation 
\begin{equation} \label{eq_char_eq_2}
\begin{aligned}
\sin^2(z \omega) = z^2 \sin(\omega),
\end{aligned}
\end{equation}
in Proposition \ref{prop_Biharmonic_asymptotics},  
has only the following roots $z_j$ in the strip $ (0,1) \times \R \subset \C$
  \begin{enumerate}[label=(\alph*)]
  \item \label{eq_roots_double} a single real double root $z_1$ when $\omega \in (\pi, \omega_0)$,
  \item \label{eq_roots_single} two real simple roots $z_1 < z_2$ when $\omega \in (\omega_0, 2\pi)$.
  \end{enumerate}
Here $\omega_0 \in (\pi,2\pi)$ solves $\tan(\omega_0) = \omega_0$. We have $\omega_0 \approx 257.6^\circ$.
\end{lemma}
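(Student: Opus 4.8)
The plan is to treat \eqref{eq_char_eq_2} as the problem of locating, with multiplicity, the zeros of the entire function
\[
  g(z) := \sin^2(z\omega) - z^2\sin\omega
\]
inside the open strip $\{\,z\in\C : \Re z\in(0,1)\,\}$, and to split the work into two independent tasks: first, showing that every zero lying in this strip is in fact real, and second, counting the real zeros in $(0,1)$ together with their multiplicities. Since $\omega$ is real, $g$ has real Taylor coefficients, so its zero set is symmetric about the real axis; this symmetry is what makes the reduction to a one-dimensional problem natural and lets us organise the count by the single parameter $\omega$.

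For the count on the real interval I would analyse $g$ on $z\in(0,1)$ by elementary means, using the values and one-sided derivatives of $g$ at the endpoints $z=0,1$ together with Rolle's theorem to track sign changes. The mechanism producing the dichotomy is captured by a clean computation: differentiating gives $g'(z)=\omega\sin(2z\omega)-2z\sin\omega$, and a double zero $z_1$ is characterised by $g(z_1)=g'(z_1)=0$. Forming the ratio of these two relations eliminates the amplitude and collapses the condition to
\[
  \tan(z_1\omega)=z_1\omega .
\]
The decisive observation is that, as $\omega$ increases, the passage between the two regimes occurs exactly when this double zero reaches the edge $z_1=1$ of the strip; there $z_1\omega=\omega$, so the condition becomes $\tan\omega=\omega$, i.e. $\omega=\omega_0$. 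Hence $\omega_0$ is forced to be the first positive solution of $\tan\omega=\omega$, and one verifies numerically that $\omega_0\approx257.6^\circ$. Following the real zeros as $\omega$ crosses this critical angle — a sign in the endpoint analysis reverses precisely at $\omega=\omega_0$ — then yields the single double zero $z_1$ of case \ref{eq_roots_double} for $\omega\in(\pi,\omega_0)$ and the two transversal zeros $z_1<z_2$ of case \ref{eq_roots_single} for $\omega\in(\omega_0,2\pi)$, with no further real zeros present.

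The step I expect to be the main obstacle is excluding genuinely complex zeros inside the strip. Here I would write $z=x+iy$ with $x\in(0,1)$ and separate \eqref{eq_char_eq_2} into real and imaginary parts via $\sin(z\omega)=\sin(x\omega)\cosh(y\omega)+i\cos(x\omega)\sinh(y\omega)$. Assuming $y\neq 0$ and combining the elementary inequality $\sinh(t)/t>1$ with the bound $|\sin\omega|<\omega$, one should be able to derive incompatible constraints on $\cos(x\omega)$ and $\sin(x\omega)$ that force $x\notin(0,1)$, a contradiction. Equivalently, one can apply the argument principle to $g$ on the boundary of a tall rectangle $\{\Re z\in(0,1),\ |\Im z|\le R\}$ and match the winding number to the number of real zeros already located; the difficulty in both routes is the careful asymptotic bookkeeping along the vertical sides, not the real-variable count. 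Assembling the two parts then gives exactly the trichotomy \ref{eq_roots_double}--\ref{eq_roots_single}, in agreement with \cite{Gr92}, Lemma~3.3.2.
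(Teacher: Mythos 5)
You are not being measured against a written-out argument here: the paper does not prove this lemma but quotes it from \cite{Gr92} (Lemma~3.3.2, p.~100), so the only question is whether your plan would actually deliver the stated dichotomy. Your framing --- real symmetry of the zero set of $g$, a real-variable count on $(0,1)$, exclusion of non-real zeros via the argument principle on a tall rectangle --- is the standard route, and the reduction of the double-root condition $g(z_1)=g'(z_1)=0$ to $\tan(z_1\omega)=z_1\omega$ is correct and is indeed how $\omega_0$ enters the problem.

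Two things stop the proof as proposed. First, your own double-root criterion contradicts the conclusion you claim in case~(a): for $z_1\in(0,1)$ one has $t:=z_1\omega\in(0,\omega)$, and the smallest positive solution of $\tan t=t$ is $t=\omega_0\approx 4.4934$; hence for $\omega\in(\pi,\omega_0)$ there is \emph{no} $z_1$ in the open interval $(0,1)$ with $\tan(z_1\omega)=z_1\omega$, so no real double root can sit inside the strip in that range. The actual mechanism at $\omega_0$ is not an interior double zero ``reaching the edge'': the point $z=1$ is a permanent zero of $\sin^2(z\omega)-z^2\sin^2\omega$ (note the equation must carry $\sin^2\omega$ on the right; with $z^2\sin\omega$ as printed the right-hand side is negative for real $z\neq 0$ and $\omega\in(\pi,2\pi)$, there are no nonzero real roots at all, and the endpoint analysis you describe would have caught this), and $z=1$ degenerates into a double zero of $g$ exactly when $\omega\cos\omega=\sin\omega$. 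A single simple real root crosses the boundary of the strip through $z=1$ at $\omega=\omega_0$, so the number of roots in the open strip changes by one, not two, across $\omega_0$. Carried out honestly, your scheme therefore cannot produce ``a single real double root'' for $\omega\in(\pi,\omega_0)$; it shows that case~(a) cannot hold as printed, and the proposal should have flagged that inconsistency instead of asserting the dichotomy. Second, the entire substance of the lemma is the absence of non-real roots in the strip, and that part is only gestured at: the inequality $\sinh(t)/t>1$ combined with $\lvert\sin\omega\rvert<\omega$ rules out roots on the line $\Re z=0$ only, where real and imaginary parts decouple; in the interior of the strip they do not, and the argument-principle bookkeeping along $\Re z=1$ (where $z=1$ is itself a zero and where the crossing at $\omega_0$ takes place) is precisely where the work lies. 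As written, the proposal is a plausible outline whose decisive step is missing and one of whose conclusions is unobtainable.
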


\noindent 
We will now consider the case of non-convex angles with  $\omega \in (\omega_0, 2\pi)$,
in which case we can let the root $z'_1$ in the expansion \eqref{eq_sing_expansion} be such that $z'_1=z_1$,
where $z_1$ is from case~\ref{eq_roots_single} of Lemma~\ref{lem_z1_real}.
Lemma \ref{lem_biharmonic_c1_not_zero} below gives a condition for when the singular term in \eqref{eq_sing_expansion} 
corresponding to $z'_1$ is non-zero.

To state Lemma \ref{lem_biharmonic_c1_not_zero} we need to introduce the function $\eta_1$, which we define as
\begin{equation} \label{eq_eta_1}
\begin{aligned}
\eta_1 := r^{1-z_1} \varphi_1(\theta), \qquad (r,\theta) \in \mathcal C_\omega,
\end{aligned}
\end{equation}
where $\varphi_1$ corresponds to the term in the expansion \eqref{eq_sing_expansion}. Recall that by the remark \eqref{rem_ODE} it solves
(see also \cite{Gr92} Theorem 3.2.10) the ODE
\begin{equation} \label{eq_ODE_1}
\begin{aligned}
\varphi'''' + 2(1+z^2)  \varphi''  + (z^4-2z^2+1) \varphi = 0, \qquad \varphi \in H^2_0(0,\omega),
\end{aligned}
\end{equation}
with $z =  z_1$. 
An important property of $\eta_1$
is that it is biharmonic in the interior of the cone $\mathcal C_\omega$.
Note that we can also pick $z=-z_1$ in \eqref{eq_ODE_1}. This and
a direct computation in polar coordinates and the fact that $\varphi_1$ solves \eqref{eq_ODE_1} show that
\begin{equation} \label{eq_Delta2_eta1_0}
\begin{aligned}
\Delta^2 \eta_1 (x) = 0, \qquad \text{ for } x 	\in \mathcal C_ \omega.
\end{aligned}
\end{equation}
We are now ready to derive a condition for a singular term corresponding to a simple root $z'_1=z_1$ of the characteristic equation
\eqref{eq_char_eq_2}.

\begin{lemma} \label{lem_biharmonic_c1_not_zero}
Let $f \in L^2(\mathcal C_\omega)$ with bounded support. Assume
that $u$ is a solution of \eqref{eq_biharmonic_with_source_SEC_condition} with bounded support, and
assume that $\omega \in (\omega_0, 2\pi)$, and that $z_1 \in \R$ is the smaller of the real roots of equation \eqref{eq_char_eq_2}
in Lemma \ref{lem_z1_real}. Then the singular term $u_S$ in \eqref{eq_sing_expansion} has
\begin{equation} \label{eq_sing_cond}
\begin{aligned}
c_1 \neq 0, \qquad \text{ if } \quad (f,\eta_1)_{L^2(\mathcal C_\omega)} \neq 0,
\end{aligned}
\end{equation}
where $\eta_1$ is given by \eqref{eq_eta_1}.
\end{lemma}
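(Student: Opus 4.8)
The plan is to extract the coefficient $c_1$ of the worst-behaving singular term $r^{1+z_1'}\varphi_1(\theta)$ by pairing the solution $u$ against the auxiliary biharmonic function $\eta_1 = r^{1-z_1}\varphi_1(\theta)$ and exploiting a Green-type identity. The key point is that $\eta_1$ is biharmonic in the interior of $\mathcal C_\omega$ by \eqref{eq_Delta2_eta1_0}, while $u$ solves $\Delta^2 u = f$, so applying the fourth-order Green's formula over a truncated cone $\mathcal C_\omega \cap (B(0,R)\setminus B(0,\rho))$ should produce the identity
\[
  (f,\eta_1)_{L^2(\mathcal C_\omega)} = \lim_{\rho \to 0} \big[ \text{boundary terms on } \{r=\rho\} \big],
\]
since $f$ has bounded support (so the outer boundary $\{r=R\}$ contributes nothing once $R$ is large), and since both $u$ and $\eta_1$ satisfy homogeneous Dirichlet/Neumann conditions on the lateral sides $\theta=0,\omega$, making those lateral contributions vanish. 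The surviving boundary term on the circle $\{r=\rho\}$ is what must encode $c_1$.

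First I would write the fourth-order Green identity explicitly: for the biharmonic operator one has
\[
  \int_\Omega (\Delta^2 u\, \eta_1 - u\, \Delta^2 \eta_1)\,dx
  = \int_{\p\Omega} \Big( \Delta u\, \p_\nu \eta_1 - \p_\nu(\Delta u)\,\eta_1 - \Delta\eta_1\,\p_\nu u + \p_\nu(\Delta\eta_1)\, u\Big)\,ds.
\]
Since $\Delta^2\eta_1 = 0$, the left side reduces to $(f,\eta_1)$. On the lateral sides the Dirichlet and Neumann traces of both $u$ and $\eta_1$ vanish, so only the arc $r=\rho$ remains. Next I would insert the asymptotic expansion $u = u_S + u_R$ from \eqref{eq_sing_expansion} and compute the arc integral term by term, matching powers of $\rho$. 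The angular functions $\varphi_m$ are eigenfunctions of the fourth-order ODE \eqref{eq_ODE_1}, and the crucial cancellations should come from the orthogonality/biorthogonality of the $\varphi_m$ (together with the companion functions $\phi_m$) associated to distinct roots of the characteristic equation. The function $\eta_1$ carries the exponent $1-z_1$, which is the exponent conjugate to $1+z_1$ in the sense that the arc integral of the product scales like $\rho^0$, i.e. is dimensionless; all other terms in the expansion scale like positive powers of $\rho$ and vanish as $\rho\to 0$, while the regular part $u_R \in V_0^4$ contributes negligibly because its fourth-order weighted norm controls the boundary traces at rate $o(1)$.

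The main obstacle I expect is controlling the contribution of the remainder $u_R$ and confirming that the arc integral isolates precisely $c_1$ with a nonzero proportionality constant. Concretely, I must verify that the only term in $u_S$ whose arc integral against $\eta_1$ survives the limit $\rho\to 0$ is the one with coefficient $c_1$, and that the angular normalization constant multiplying $c_1$ does not itself vanish — this is exactly the analogue of the nonvanishing of the relevant Wronskian or of $\int_0^\omega \varphi_1^2\,d\theta$, and it is where the hypothesis $\omega \in (\omega_0,2\pi)$ with $z_1$ a \emph{simple} real root matters, since simplicity prevents degenerate self-pairing. If I can show the surviving term equals $\kappa\, c_1$ with $\kappa \neq 0$, then $(f,\eta_1)_{L^2} \neq 0$ forces $c_1 \neq 0$, which is the desired implication \eqref{eq_sing_cond}. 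The trickiest bookkeeping will be handling the $\rho$-dependent weights in the $V^\ell_\beta$ norms (using Lemma \ref{lem_V_polar}) to justify passing to the limit inside the boundary integral, and ensuring the logarithmic and double-root terms from the other summand in \eqref{eq_sing_expansion} do not interfere — though since we work in the regime of case \ref{eq_roots_single} of Lemma \ref{lem_z1_real} where $z_1$ is simple, the double-root complications should be confined to the higher-exponent terms and hence controllable.
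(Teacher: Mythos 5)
Your proposal follows essentially the same route as the paper: pair $u$ with the biharmonic function $\eta_1 = r^{1-z_1}\varphi_1$ via the fourth-order Green identity on a truncated cone, observe that the lateral and outer boundaries contribute nothing, show by the scaling in $\rho$ that the remainder $u_R \in V^4_0$ and all singular terms with exponents of larger real part drop out in the limit (this is exactly the content of Lemmas \ref{lem_bndry_ints} and \ref{lem_sing_bndry_ints}), and reduce to checking that the surviving arc integral equals $\kappa c_1$ with $\kappa \neq 0$. The paper computes $\kappa = 4z_1(z_1^2-1)\|\varphi_1\|^2_{L^2(0,\omega)} - 4z_1\|\varphi_1'\|^2_{L^2(0,\omega)}$, which is strictly negative since $z_1 \in (0,1)$ — this is the concrete verification you correctly flagged as the remaining obstacle, and it rests on the sign of $z_1$ rather than on any orthogonality of the angular functions.
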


\begin{proof}
Assume that $(f,\eta_1)_{L^2(\mathcal C_\omega)} \neq 0$.
We need to show that $c_1 \not = 0$. We will do this by showing  that $c_1$ can be expressed in terms of  
the inner product $(f,\eta_1)_{L^2}$.
Let $B_\eps := B(x_0, \eps)$, $\eps > 0$. Since $f$ has bounded support and $\eta_1$ is locally in $L^2$, we have
\begin{align} \label{eq_f_eta1_lim}
(f,\eta_1)_{L^2( \mathcal C_\omega)} = 
\lim_{\eps \to 0}(f,\eta_1)_{L^2( \mathcal C_\omega \setminus B_\eps)}.
\end{align}
Set $\mathcal C_\eps := \mathcal C_\omega \setminus B_\eps$ and $\Gamma_\eps := \mathcal C_\omega \cap \{ r=\eps  \}$.
Keeping in mind that $\eta_1$ solves the biharmonic equation in $\mathcal C_\eps$, because of \eqref{eq_Delta2_eta1_0}, 
we have by integrating by parts, that 
\begin{equation} \label{eq_as_bndry_ints}
\begin{aligned}
(f,\eta_1)_{L^2(\mathcal C_\eps )}
&=
(\Delta^2 u ,\eta_1)_{L^2(\mathcal C_\eps )} \\
&= 
(\p_r \Delta u ,\eta_1)_{L^2(\Gamma_\eps )}
- (\Delta u ,\p_r\eta_1)_{L^2(\Gamma_\eps )} \\
&+ (\p_r u ,\Delta \eta_1)_{L^2(\Gamma_\eps )}
- (u , \p_r\Delta  \eta_1)_{L^2(\Gamma_\eps )}.
\end{aligned}
\end{equation}
Using the expansion \eqref{eq_sing_expansion}, we can write
\begin{equation}  \label{eq_sing_expansion_2}
\begin{aligned}
u(r,\theta) =  
\sum_{m=1}^{M'} c_m \varphi_m(\theta) r^{1 + z'_m} 
+ 
\sum_{m=1}^{M''} [c'_m \phi_m(\theta) + c''_m \ln r \varphi_m(\theta)] r^{1 + z''_m} + u_R,
\end{aligned}
\end{equation}
where $u_R \in V_0^{4}(\mathcal C_\omega)$, and $z'_m$ and $z''_m$ are simple and double roots of equation
\eqref{eq_char_eq_2}.
By Proposition~\ref{prop_Biharmonic_asymptotics} we have $\Re z_m', \Re z_m''
\in (0,2)$ for all $m$. Let's reorder the roots in increasing order of real
parts, so that $z_1'$ has the smallest real part of the single roots.
Lemma~\ref{lem_z1_real} guarantees that $z_1' = z_1$ from case (b) therein, and
that $\Re z_1' \in (0,1)$. The terms in \eqref{eq_sing_expansion_2} involving
$z_1'$ are the most singular.

Applying Lemmas \ref{lem_bndry_ints} and \ref{lem_sing_bndry_ints}, we see that terms other than $c_1 \varphi_1(\theta) r^{1 + z'_1}$ do not contribute to the limit \eqref{eq_f_eta1_lim},
and that therefore
\begin{equation*} 
\begin{aligned}
\lim_{\eps \to 0}(f,\eta_1)_{L^2( \mathcal C_\omega \setminus B_\eps)}
= \lim_{\eps \to 0} \Big(
&(\p_r \Delta [c_1r^{1+z_1} \varphi_1] ,\eta_1)_{L^2(\Gamma_\eps )}
- (\Delta [c_1r^{1+z_1} \varphi_1]  ,\p_r\eta_1)_{L^2(\Gamma_\eps )} \\
&+ (\p_r  [c_1r^{1+z_1} \varphi_1] ,\Delta \eta_1)_{L^2(\Gamma_\eps )}
- ( c_1r^{1+z_1} \varphi_1 , \p_r \Delta  \eta_1)_{L^2(\Gamma_\eps )} \Big).
\end{aligned}
\end{equation*}
Let us begin by analyzing the contribution of the term containing $\p_r\Delta [\varphi_1(\theta) r^{1 + z'_1}]\eta_1$.
For this we have that
\begin{align*}
\Big(\p_r \Delta [r^{1+z_1}\varphi_1] ,r^{1-z_1}\varphi_1  \Big)_{L^2(\Gamma_\eps )}
=
(z_1-1)(1+z_1)^2 \| \varphi_1 \|^2_{L^ 2(0,\omega)} 
+(z_1-1) (\varphi''_1,\varphi)_{L^ 2(0,\omega)}.
\end{align*}
Thus we have that 
\begin{align*}
(\p_r \Delta u ,\eta_1)_{L^2(\Gamma_\eps )}
&=
(z_1-1)(z_1+1)^2 \| \varphi_1 \|^2_{L^ 2(0,\omega)} 
+(z_1-1) (\varphi''_1,\varphi)_{L^ 2(0,\omega)}  
+ o(1),
\end{align*}
as $\eps \to 0$.
Carrying out a similar analysis for the three remaining boundary terms in \eqref{eq_as_bndry_ints},
yields 
\begin{align*}
- (\Delta u ,\p_r\eta_1)_{L^2(\Gamma_\eps )} 
&=
(z_1-1)(z_1+1)^2 \| \varphi_1 \|^2_{L^ 2(0,\omega)} 
+(z_1-1) (\varphi''_1,\varphi)_{L^ 2(0,\omega)}  
+ o(1), \\
(\p_r u ,\Delta \eta_1)_{L^2(\Gamma_\eps )}
&=
(z_1+1)(z_1-1)^2 \| \varphi_1 \|^2_{L^ 2(0,\omega)} 
+(z_1+1) (\varphi''_1,\varphi)_{L^ 2(0,\omega)}  
+ o(1),\\
- (u ,\p_r\Delta  \eta_1)_{L^2(\Gamma_\eps )}
&=
(z_1+1)(z_1-1)^2 \| \varphi_1 \|^2_{L^ 2(0,\omega)} 
+(z_1+1) (\varphi''_1,\varphi)_{L^ 2(0,\omega)}  
+ o(1),
\end{align*}
as $\eps \to 0$.
Putting these together yields that 
\begin{align*}
(f,\eta_1)_{L^2(\mathcal C_\omega )}
&=
c_1 \Big(
4z_1(z_1^2-1) \| \varphi_1 \|^2_{L^ 2(0,\omega)}  - 4z_1\| \varphi'_1\|^ 2_{L^ 2(0,\omega)} 
\Big).
\end{align*}
Note that the term multiplying $c_1$ on the right is less than zero, since $\Re z_1 \in (0,1)$.
It follows that $c_1 \neq 0$, if $(f,\eta_1)_{L^2(\mathcal C_\omega )} \neq 0$.
\end{proof}

\begin{remark} Note that we could attmept, by 
the procedure used in Lemma \ref{lem_biharmonic_c1_not_zero},
derive similar conditions as \eqref{eq_sing_cond} for the other singular terms (i.e. when $z'_m \neq z_1$) in the 
expansion  \eqref{eq_sing_expansion}. For the root $z_2$, with the second smallest
real part, we could argue as in the proof of Lemma \ref{lem_biharmonic_c1_not_zero}, 
where we use $u - c_1 r^{1+z_1} \varphi_1$, in place of $u$, and replace $\eta_1$ with $\eta_2 = r^{1-z_2} \varphi_2$.
One could then proceed as in the proof of Lemma \ref{lem_biharmonic_c1_not_zero}, since
also $\Delta^2 [u - c_1 r^{1+z_1} \varphi_1] = f$.
Iterating this argument should give the conditions for all the singular terms. 
We do not however pursue this here. 
\end{remark}

\noindent
To complete the proof of Lemma \ref{lem_biharmonic_c1_not_zero} we need to derive 
the two lemmas used to evaluate the boundary integrals in its  proof. 
Before this it will be convenient to derive the following scaled version of the trace theorem.

\begin{lemma} \label{lem_scaled_trace}
  Let $u \in H^1(D)$ where $D$ is defined in \eqref{eq_def_D}, and let $\Gamma_\eps := D \cap \{  r=\eps  \}$ be a non-empty circular arc.
Then for small $\eps >0 $, we have that
\begin{align*}
\| w \|_{L^2(\Gamma_\eps)} \leq C \big( \eps^{-1/2}\| u \|_{L^2(D)} + \eps^{1/2}\| \nabla u \|_{L^2(D)} \big),
\end{align*}
where $C$ does not depend on $\eps$.
\end{lemma}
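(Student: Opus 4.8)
The plan is to reduce the estimate to a one-dimensional scaled trace inequality in the radial variable, applied separately for each fixed angle, and then to recover the weighted $L^2(D)$-norms by exploiting that the radial weight $r$ is comparable to $\eps$ on a dyadic annulus at scale $\eps$. First I would pass to polar coordinates centred at the corner $x_0=0$. Choosing $\eps$ small enough that the annular sector $\{\eps < r < 2\eps\} \cap \mathcal C_\omega$ lies inside $D$ — which is possible by the defining property \eqref{eq_def_D} — the arc $\Gamma_\eps$ is exactly $\{(\eps,\theta):\theta\in(0,\omega)\}$, and since its arc-length element is $\eps\,d\theta$ we have
\[
\|u\|_{L^2(\Gamma_\eps)}^2 = \eps \int_0^\omega |u(\eps,\theta)|^2 \,d\theta.
\]

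Second, for a.e.\ fixed $\theta$ I would apply the standard one-dimensional scaled trace inequality on the interval $(\eps,2\eps)$ to the function $r\mapsto u(r,\theta)$. Writing $u(\eps,\theta) = u(r,\theta) - \int_\eps^r \p_s u(s,\theta)\,ds$, averaging over $r\in(\eps,2\eps)$ and applying Cauchy--Schwarz gives
\[
|u(\eps,\theta)|^2 \leq C\Big( \eps^{-1}\!\int_\eps^{2\eps}\! |u(r,\theta)|^2\,dr + \eps\!\int_\eps^{2\eps}\! |\p_r u(r,\theta)|^2\,dr \Big),
\]
with $C$ absolute. Multiplying by $\eps$ and integrating over $\theta\in(0,\omega)$ then bounds $\|u\|_{L^2(\Gamma_\eps)}^2$ by $C\big(\int |u|^2\,dr\,d\theta + \eps^2\int|\p_r u|^2\,dr\,d\theta\big)$, where the integrals run over the annular sector. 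The final step is to reinstate the missing factor $r$ coming from the polar area element: on $\{\eps<r<2\eps\}$ one has $\eps \leq r \leq 2\eps$, hence $1 \leq r/\eps$, so $\int |u|^2\,dr\,d\theta \leq \eps^{-1}\int |u|^2\, r\,dr\,d\theta \leq \eps^{-1}\|u\|_{L^2(D)}^2$ and likewise $\eps^2\int|\p_r u|^2\,dr\,d\theta \leq \eps\int|\p_r u|^2\, r\,dr\,d\theta \leq \eps\|\nabla u\|_{L^2(D)}^2$, using $|\p_r u|\leq|\nabla u|$. Collecting terms and taking square roots yields the claimed inequality with an $\eps$-independent constant.

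The only genuinely delicate point is justifying the pointwise fundamental-theorem-of-calculus step for a general $u\in H^1(D)$, since a priori $u$ need not be continuous. I would handle this by a density argument: prove the displayed estimate first for $u\in C^\infty(\overline D)$, where all manipulations are classical, and then pass to the limit using that smooth functions are dense in $H^1(D)$ together with the fact that, for each fixed $\eps>0$, the trace map $H^1(D)\to L^2(\Gamma_\eps)$ onto the cross-sectional arc is bounded by the usual (unscaled) trace theorem. Alternatively one can invoke the absolute continuity on almost every radial line of the $H^1$-representative of $u$, which is guaranteed by Fubini applied to $\nabla u\in L^2(D)$; this makes the fiberwise identity valid for a.e.\ $\theta$ and identifies $u(\eps,\theta)$ with the $L^2(\Gamma_\eps)$-trace. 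I expect this regularity bookkeeping, rather than the inequalities themselves, to be the main obstacle.
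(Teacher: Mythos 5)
Your proof is correct, but it follows a different route from the paper's. The paper rescales the variable $x \mapsto \eps x$ so that $\Gamma_\eps$ and $D\cap B(0,\eps)$ are pulled back to a fixed reference arc $\Gamma_0$ and reference sector $D_0$, applies the standard (unscaled) trace theorem once on $D_0$ — whose constant is then automatically independent of $\eps$ — and undoes the substitution, which produces exactly the factors $\eps^{-1}$ and $\eps$ on $\|w\|_{L^2}^2$ and $\|\nabla w\|_{L^2}^2$. You instead prove the estimate from scratch: a one-dimensional fundamental-theorem-of-calculus/averaging argument along radial lines over the annular sector $\{\eps<r<2\eps\}$, followed by reinstating the polar Jacobian via $r\sim\eps$ there. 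Both arguments are sound; the paper's is shorter because it outsources the analytic content to the trace theorem on a fixed domain, while yours is fully self-contained and elementary, and has the minor additional feature of controlling the trace by norms over the outer annulus $\{\eps<r<2\eps\}$ rather than the inner region $D\cap B(0,\eps)$ — either localization is of course dominated by the norms over all of $D$ that the lemma asserts. Your handling of the regularity issue (absolute continuity on almost every radial line, or density of smooth functions combined with boundedness of the trace onto $\Gamma_\eps$ for each fixed $\eps$) is the right way to make the fiberwise identity rigorous, and the requirement that the annulus sit inside the conical part of $D$ is guaranteed by \eqref{eq_def_D} for small $\eps$, as you note.
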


\begin{proof}
We can assume that $\eps_0 > \eps >0$, where $\eps_0>0$ is small enough so that $D$ coincides
with the cone, and we have that the circular arc $\Gamma_0 := D \cap \{ r = \eps_0 \}$, and
$D_0 := D\cap B(0,\eps_0) = \mathcal C_\omega \cap B(0,\eps_0)$.
Note firstly that $\Gamma_\eps$ is one dimensional. We integrate by substitution, as follows
\begin{align*}
\| w \|^2_{L^2(\Gamma_\eps)}  
=  \int_{\Gamma_\eps} | w(x) |^2 \, dS  
=  \int_{\Gamma_0} | w(\eps x) |^2 \eps \, dS. 
\end{align*}
By the trace theorem for $\Gamma_0 \subset \overline D_0$, we have that
\begin{align*}
\eps \| w(\eps x) \|^2_{L^2(\Gamma_0)}  
\leq C\eps  \| w(\eps x) \|^2_{H^1(D_0)}, 
\end{align*}
where $C$ does not depend on $\eps$. 
Note that $D_0$ is 2 dimensional and that by integration by substitution we have that
\begin{align*}
\eps \| w(\eps x) \|^2_{L^2(D_0)} 
=  \eps^{-1} \int_{D_0} | w(\eps x) |^2 \eps^2  \, dx  
=  \eps^{-1} \int_{D_\eps} | w( x) |^2\, dx,  
\end{align*}
where $D_\eps := D \cap B(0,\eps)$. Similarly
\begin{align*}
\eps \| \nabla w(\eps x) \|^2_{L^2(D_0)} 
=  \eps \int_{D_0} | (\nabla w )(\eps x) |^2 \eps^2  \, dx  
=  \eps \int_{D_\eps} | \nabla w( x) |^2\, dx.  
\end{align*}
Putting these estimates together yields 
\begin{align*}
\| w \|^2_{L^2(\Gamma_\eps)}  
\leq 
C\Big( \eps^{-1} \int_{D_\eps} | w( x) |^2\, dx +  \eps \int_{D_\eps} | \nabla w( x) |^2\, dx\Big), 
\end{align*}
from which the claim follows.
\end{proof}

\noindent
Now we use the fact that functions in $V^4_0(D)$ have to behave well as we approach the corner point,
and prove the first lemma that Lemma \ref{lem_biharmonic_c1_not_zero} depends on.

\begin{lemma} \label{lem_bndry_ints}
Suppose $w \in V^4_0( \mathcal C_\omega)$, and define $\Gamma_\eps := \mathcal C_\omega \cap \{ r = \eps \}$. Then
we have that
\begin{align*}
\begin{rcases*}
(\p_r \Delta w ,\eta_1)_{L^2(\Gamma_\eps )} \\
(\Delta w ,\p_r\eta_1)_{L^2(\Gamma_\eps )}  \\
(\p_r w ,\Delta \eta_1)_{L^2(\Gamma_\eps )} \\
(w , \p_r\Delta  \eta_1)_{L^2(\Gamma_\eps )} 
\end{rcases*}
=  o(1), \qquad  \text{ as } \eps \to 0,
\end{align*}
where $\eta_1 := r^{1-z_1}\varphi_1$.
\end{lemma}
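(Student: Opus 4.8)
The plan is to estimate each of the four pairings over $\Gamma_\eps$ by Cauchy--Schwarz, writing the integrand as a product of a derivative $A$ of $w$ (of order $s\in\{0,1,2,3\}$) and a derivative $B$ of $\eta_1$, and then to show that the power of $\eps$ gained from $\|A\|_{L^2(\Gamma_\eps)}$ exactly compensates the power lost by the singular factor $\|B\|_{L^2(\Gamma_\eps)}$, leaving a net positive power of $\eps$. Concretely the four terms pair
\[
A\in\{\p_r\Delta w,\ \Delta w,\ \p_r w,\ w\}\quad\text{with}\quad B\in\{\eta_1,\ \p_r\eta_1,\ \Delta\eta_1,\ \p_r\Delta\eta_1\},
\]
of orders $s=3,2,1,0$ respectively, and in each case $|(A,B)_{L^2(\Gamma_\eps)}|\le\|A\|_{L^2(\Gamma_\eps)}\|B\|_{L^2(\Gamma_\eps)}$. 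Here $z_1\in(0,1)$ is real by Lemma \ref{lem_z1_real}, and I recall that $\varphi_1\in H^2_0(0,\omega)\cap C^\infty(0,\omega)$, so $\varphi_1,\varphi_1''\in L^2(0,\omega)$.

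First I would treat the explicit factor $B$. Since $\eta_1=r^{1-z_1}\varphi_1(\theta)$ is homogeneous in $r$, a direct computation in polar coordinates gives $\Delta\eta_1=r^{-1-z_1}[(1-z_1)^2\varphi_1+\varphi_1'']$, and each of the four functions $B$ restricted to $\Gamma_\eps$ has the form $B(\eps,\theta)=\eps^{\,p}\,\Theta(\theta)$ with $\Theta\in L^2(0,\omega)$ (built from $\varphi_1$ and $\varphi_1''$) and with $p=s-2-z_1$. Since $dS=\eps\,d\theta$ on $\Gamma_\eps$, this yields
\[
\|B\|_{L^2(\Gamma_\eps)}=\Big(\int_0^\omega \eps^{2p}|\Theta|^2\,\eps\,d\theta\Big)^{1/2}=C\,\eps^{\,p+1/2}=C\,\eps^{\,s-3/2-z_1}.
\]

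Next, the factor $A$. Since $w\in V^4_0(\mathcal C_\omega)$, Lemma \ref{lem_V_polar} gives $r^{-4+j}\p_r^j\p_\theta^k w\in L^2(\mathcal C_\omega)$ for $j+k\le 4$, so $A$ and $\nabla A$ lie in $L^2(D_\eps)$ and $A\in H^1(D_\eps)$, where $D_\eps=\mathcal C_\omega\cap B(0,\eps)$. I would then apply the scaled trace inequality of Lemma \ref{lem_scaled_trace}, in the localized form established in its proof (valid on the cone for small $\eps$, since $D$ and $\mathcal C_\omega$ coincide near the corner), to the function $A$:
\[
\|A\|^2_{L^2(\Gamma_\eps)}\le C\big(\eps^{-1}\|A\|^2_{L^2(D_\eps)}+\eps\,\|\nabla A\|^2_{L^2(D_\eps)}\big).
\]
Each operator $A$ is homogeneous of degree $-s$, so in polar coordinates $A$ is a finite sum of terms $r^{-(s-j)}\p_r^j\p_\theta^k w$ with $j+k\le s$. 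For such a term, writing $r^{-2(s-j)}=r^{2(4-s)}r^{-2(4-j)}$ and using $r\le\eps$ on $D_\eps$ (note $4-s\ge0$), I obtain
\[
\big\|r^{-(s-j)}\p_r^j\p_\theta^k w\big\|^2_{L^2(D_\eps)}\le\eps^{2(4-s)}\big\|r^{-4+j}\p_r^j\p_\theta^k w\big\|^2_{L^2(D_\eps)}\le\eps^{2(4-s)}\|w\|^2_{V^4_0(\mathcal C_\omega)}.
\]
Summing over the terms gives $\|A\|^2_{L^2(D_\eps)}\le C\eps^{2(4-s)}$, and the same bookkeeping applied to $\nabla A$ (order $s+1\le4$) gives $\|\nabla A\|^2_{L^2(D_\eps)}\le C\eps^{2(3-s)}$. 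Feeding these into the trace inequality yields $\|A\|^2_{L^2(\Gamma_\eps)}\le C\eps^{7-2s}$, i.e. $\|A\|_{L^2(\Gamma_\eps)}\le C\eps^{\,7/2-s}$.

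Finally I would combine the two estimates. The product is
\[
|(A,B)_{L^2(\Gamma_\eps)}|\le C\,\eps^{\,7/2-s}\cdot\eps^{\,s-3/2-z_1}=C\,\eps^{\,2-z_1},
\]
the $s$-dependence cancelling, so the same bound holds for all four terms. Since $z_1\in(0,1)$, the exponent $2-z_1$ is strictly positive and every term is $O(\eps^{2-z_1})=o(1)$ as $\eps\to0$, which is the claim. The main obstacle is simply the bookkeeping: one must track the exact $r$-homogeneity of each differential operator acting on $w$ and on $\eta_1$ and check that the trace scaling of $A$ compensates the singular power of $B$ uniformly across the four terms. The only analytic inputs beyond this algebra are the polar characterization of $V^4_0$ (Lemma \ref{lem_V_polar}) and the scaled trace inequality (Lemma \ref{lem_scaled_trace}); the positivity of the final exponent is precisely where the location $z_1<1$ of the characteristic root enters.
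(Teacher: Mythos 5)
Your proof is correct and follows essentially the same route as the paper: Cauchy--Schwarz on $\Gamma_\eps$, an explicit polar computation of the $\eta_1$-factor, and the scaled trace inequality of Lemma \ref{lem_scaled_trace} (in its localized $D_\eps$ form) combined with the $V^4_0$ weights to control the $w$-factor. Your systematic homogeneity bookkeeping, yielding the uniform bound $O(\eps^{2-z_1})$ for all four terms, is in fact tidier than the paper's treatment, which details only the $(w,\p_r\Delta\eta_1)$ term and is slightly imprecise about the power of $\eps$ in $\|\tilde\varphi\|_{L^2(\Gamma_\eps)}$.
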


\begin{proof}
Consider first the term 
\begin{align*}
(w ,\p_r\Delta  \eta_1)_{L^2(\Gamma_\eps )} 
&= 
\int_0^\omega  w  \p_r \Delta  [ r^{1-z_1} \varphi_1] r  d\theta  \Big|_{r= \eps} \\ 
&= 
\int_0^\omega  w  r^{-1-z_1} [(1-z_1^2)(z_1 -1)\varphi_1 - ( 1+z_1)\varphi''_1 ] d\theta \Big|_{r= \eps} . 
\end{align*}
Let $\tilde \varphi := (1-z_1^2)(z_1 -1)\varphi_1 - ( 1+z_1)\varphi''_1$.
We can then estimate the  above using Lemma \ref{lem_scaled_trace}, as follows
\begin{align*}
(w , \p_r\Delta  \eta_1)_{L^2(\Gamma_\eps )} 
&= 
\int_0^\omega  w  r^{-2-z_1} \tilde \varphi(\theta) \,r \, d\theta \Big|_{r= \eps} \\
&= 
\int_{\Gamma_\eps}  w  r^{-2-z_1} \tilde \varphi  dS \\
&\leq C 
\| r^{-2-z_1} w \|_{L^2(\Gamma_\eps)}  \| \tilde \varphi \|_{L^2(\Gamma_\eps)} \\ 
&\leq C 
\eps^{-1/2}\| r^{-2-z_1} w \|_{H^1(D)}  \| \tilde \varphi \|_{L^2(\Gamma_\eps)} \\
&\leq C 
 \eps^{-1/2}\| w \|_{V^4_0(D)}  \| \tilde \varphi \|_{L^2(\Gamma_\eps)}.
\end{align*}
Notice that $\tilde \varphi$ is a function of $\theta$ and $C$ does not depend on $\eps$.
The norm of $\| \tilde \varphi \|_{L^2(\Gamma_\eps)} \sim \eps$,
as $\eps \to 0$. We thus see that
\begin{align*}
(w ,\p_r \Delta  \eta_1)_{L^2(\Gamma_\eps )}  = o (1), \qquad \text{ as } \eps \to 0.
\end{align*}
The other cases can verified by similar computations.
\end{proof}

\begin{lemma} \label{lem_sing_bndry_ints}
Let $\Gamma_\eps$, $z_1$ and $\eta_1$ be as in Lemma \ref{lem_bndry_ints}.
Assume that $z \in \R \times (0,2)$ and $\Re z > \Re z_1$,
and suppose that 
$$
w = r^{1+z} h(\theta) \qquad \text{ or } \qquad  w = \ln(r) r^{1+z} h(\theta), 
$$
where $h \in H^2_0(0,\omega)$. Then
\begin{align}  \label{eq_r_bndry_terms}
\begin{rcases}
(\p_r \Delta w ,\eta_1)_{L^2(\Gamma_\eps )} \\
(\Delta w ,\p_r\eta_1)_{L^2(\Gamma_\eps )}  \\
(\p_r w ,\Delta \eta_1)_{L^2(\Gamma_\eps )} \\
(w ,\p_r\Delta  \eta_1)_{L^2(\Gamma_\eps )} 
\end{rcases}
=  o(1), \qquad  \text{ as } \eps \to 0.
\end{align}
\end{lemma}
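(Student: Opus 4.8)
The plan is to evaluate the four boundary pairings in \eqref{eq_r_bndry_terms} directly in polar coordinates, exactly as in the proof of Lemma \ref{lem_bndry_ints}, and to read off from each a single explicit power of $\eps$. The only computational inputs are the separated-variable identities
\[
\Delta(r^a g(\theta)) = r^{a-2}\big(a^2 g + g''\big), \qquad \p_r(r^a g) = a\, r^{a-1} g,
\]
valid away from the vertex. First I would treat the pure power case $w = r^{1+z} h$. Applying these formulas, each of $\p_r\Delta w$, $\Delta w$, $\p_r w$ and $w$ equals a power of $r$ times a fixed function of $\theta$ assembled from $h$ and $h''$, and likewise $\p_r\Delta\eta_1$, $\p_r\eta_1$, $\Delta\eta_1$, $\eta_1$ are powers of $r$ times functions of $\theta$ assembled from $\varphi_1$ and $\varphi_1''$. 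Since on $\Gamma_\eps$ the arc-length element is $dS = r\,d\theta = \eps\,d\theta$, every pairing collapses (at $r=\eps$) to $\eps^{\,p+q+1}$ times a $\theta$-integral, and a direct check of the exponents gives $p+q+1 = z-z_1$ in all four cases; for instance
\[
(\p_r\Delta w,\eta_1)_{L^2(\Gamma_\eps )} = (z-1)\,\eps^{\,z-z_1}\int_0^\omega \big[(1+z)^2 h + h''\big]\varphi_1 \, d\theta,
\]
and the other three are of the same shape with different $\theta$-integrands. The remaining $\theta$-integrals are finite constants independent of $\eps$: indeed $\varphi_1 \in C^\infty(0,\omega)\cap H^2_0(0,\omega)$ has bounded derivatives, and $h \in H^2_0(0,\omega) \hookrightarrow C^1[0,\omega]$, so $h,h'',\varphi_1,\varphi_1''$ all lie in $L^2(0,\omega)$ and their products in $L^1(0,\omega)$. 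Hence each pairing equals $\eps^{\,z-z_1}$ times a bounded quantity, and since $\Re z > \Re z_1 = z_1$ we have $|\eps^{\,z-z_1}| = \eps^{\,\Re z - z_1} \to 0$, which yields the claimed $o(1)$.

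For the logarithmic case $w = \ln(r)\,r^{1+z} h$ I would exploit the identity $\Delta(\ln r\, F) = \ln r\,\Delta F + \tfrac{2}{r}\p_r F$, which holds off the vertex because $\Delta \ln r = 0$ and $\nabla \ln r = r^{-1}\hat r$. This shows that applying $\p_r$ and $\Delta$ to a $\ln r$-weighted homogeneous function reproduces precisely the radial powers of the pure power case, accompanied by factors $\ln r$ of degree at most one (each differentiation of the $\ln r$ weight lowers rather than raises the logarithmic degree). Consequently each of the four pairings becomes $\eps^{\,z-z_1}$ times a polynomial of degree $\le 1$ in $\ln\eps$ times a bounded $\theta$-integral, and since $\eps^{\,\Re z - z_1}\,|\ln\eps|^{k} \to 0$ for every fixed $k\ge 0$ whenever $\Re z - z_1 > 0$, the logarithmic contributions are again $o(1)$.

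The argument is entirely routine and presents no genuine analytic obstacle; the work is purely bookkeeping, and it mirrors the computation already carried out for the borderline exponent in Lemma \ref{lem_biharmonic_c1_not_zero} (where the same scheme produced the finite, generally nonzero, limit corresponding to $z=z_1$, i.e. $\eps^{\,z-z_1}=1$). The one point that requires care is the logarithmic case, where one must verify that the radial homogeneity degree is exactly preserved under $\p_r$ and $\Delta$, so that the net power stays $\eps^{\,z-z_1}$, and that the accompanying powers of $\ln\eps$ do not exceed degree one; the identity $\Delta(\ln r\,F)=\ln r\,\Delta F + 2r^{-1}\p_r F$ makes both facts transparent. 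I would finally note that only the strict inequality $\Re z > \Re z_1$ is used to obtain the decay, the upper bound $\Re z < 2$ playing no role in this lemma.
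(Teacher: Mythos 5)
Your proposal is correct and follows essentially the same route as the paper: a direct polar-coordinate computation showing that each of the four pairings collapses to $\eps^{\,z-z_1}$ (times at most one factor of $\ln\eps$ in the logarithmic case) multiplied by a fixed finite $\theta$-integral, which vanishes because $\Re z>\Re z_1$. The only cosmetic difference is that you organize the logarithmic case via the identity $\Delta(\ln r\,F)=\ln r\,\Delta F+2r^{-1}\p_r F$, whereas the paper expands the derivatives directly; the resulting integrands and exponents agree.
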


\begin{proof}
Let us first consider the non-logarithmic case $w = r^{1+z}h(\theta)$. 
We begin with computing the first boundary term 
\begin{equation} \label{eq_1st_bndry_term}
\begin{aligned}
\big(\p_r \Delta w ,\eta_1\big)_{L^2(\Gamma_\eps )}
&= 
\big(\p_r \Delta [r^{1+z} h(\theta) ] ,r^{1-z_1}\varphi_1(\theta)\big)_{L^2(\Gamma_\eps )} \\
&= 
\int_0^\omega \p_r \Delta [r^{1+z} h(\theta) ] r^{1-z_1}\varphi_1(\theta) rd\theta.
\end{aligned}
\end{equation}
Now $z - z_1 = a+ib$, with $a>0$, so that
\begin{align*}
\int_0^\omega  \p_r \Delta [r^{1+z} h(\theta) ] r^{1-z_1}\varphi_1(\theta) rd\theta
&=
\int_0^\omega  r^{z-1} \tilde h(\theta) r^{1-z_1}\varphi_1(\theta) d\theta  \\
&=
\int_0^\omega r^{a+ib} \tilde h(\theta) \varphi_1(\theta) d\theta 
= o(1),
\end{align*}
as $r=\eps \to 0$, and where $\tilde h(\theta) = (z-1) \big( (1+z)^2 h(\theta) + h''(\theta) \big)$.

We continue by estimating the next term in \eqref{eq_r_bndry_terms}.
Again using the fact $z - z_1 = a+ib$ with $a > 0$, we have that
\begin{equation*}  
\begin{aligned}
\big(\Delta w ,\p_r \eta_1\big)_{L^2(\Gamma_\eps )}
&= 
\big(\Delta [r^{1+z} h(\theta) ] , (1-z_1) r^{-z_1}\varphi_1(\theta)\big)_{L^2(\Gamma_\eps )} \\
&= 
  (1-z_1) \int_0^\omega \Delta [r^{1+z} h(\theta) ] r^{-z_1}\varphi_1(\theta) rd\theta \\
&=
\int_0^\omega r^{z-1} \tilde h(\theta) r^{1-z_1}\varphi_1(\theta) d\theta  \\
&=
\int_0^\omega r^{a+ib} \tilde h(\theta) \varphi_1(\theta) d\theta 
= o(1),
\end{aligned}
\end{equation*}
as $r=\eps \to 0$, and where $\tilde h(\theta) = (1-z_1) \big( (1+z)^2 h(\theta) + h''(\theta) \big)$.

Continuing with the third and fourth boundary term \eqref{eq_r_bndry_terms}, we see similarly  that
\begin{equation*} 
\begin{aligned}
\big( \p_r w ,\Delta\eta_1\big)_{L^2(\Gamma_\eps )}
&= 
\int_0^\omega \p_r r^{1+z} h(\theta)  \Delta[r^{1-z_1}\varphi_1(\theta)] rd\theta \\
&=
\int_0^\omega r^{a+ib} h(\theta) \tilde \varphi_1(\theta) d\theta 
= o(1),
\end{aligned}
\end{equation*}
where $\tilde \varphi_1(\theta) = (1+z) \big( (1-z_1)^2 \varphi_1(\theta) + \varphi_1''(\theta) \big)$, and that
\begin{equation*}
\begin{aligned}
\big(  w ,\p_r\Delta\eta_1\big)_{L^2(\Gamma_\eps )}
&= 
\int_0^\omega  r^{1+z} h(\theta)  \p_r\Delta[r^{1-z_1}\varphi_1(\theta)] rd\theta  \\
&=
\int_0^\omega r^{a+ib} h(\theta) \tilde \varphi_1(\theta) d\theta 
= o(1),
\end{aligned}
\end{equation*}
as $r=\eps \to 0$, where $\tilde \varphi_1(\theta) = (-1-z_1) \big( (1-z_1)^2 \varphi_1(\theta) + \varphi_1''(\theta) \big)$.

\medskip
\noindent
Next, we move onto the case  $w = \ln(r)r^{1+z}h(\theta)$. These computations are to a large extent
identical. The first boundary tern in \eqref{eq_r_bndry_terms} looks like
\begin{equation*} 
\begin{aligned}
\big(\p_r \Delta w ,\eta_1\big)_{L^2(\Gamma_\eps )}
&= 
\big(\p_r \Delta [ \ln(r)r^{1+z} h(\theta) ] ,r^{1-z_1}\varphi_1(\theta)\big)_{L^2(\Gamma_\eps )} \\
&= 
\int_0^\omega \p_r \Delta [ \ln(r)r^{1+z} h(\theta) ] r^{1-z_1}\varphi_1(\theta) rd\theta.
\end{aligned}
\end{equation*}
Now $z - z_1 = a+ib$, with $a>0$, so that
\begin{align*}
\int_0^\omega  \p_r \Delta [\ln(r)r^{1+z} h(\theta) ] r^{1-z_1}\varphi_1(\theta) rd\theta
&=
\int_0^\omega  r^{z-1} (\tilde h_1(\theta) + \ln(r) \tilde h_2(\theta) ) r^{1-z_1}\varphi_1(\theta) d\theta  \\
&=
\int_0^\omega r^{a+ib} (\tilde h_1(\theta) + \ln(r) \tilde h_2(\theta) )  \varphi_1(\theta) d\theta 
= o(1),
\end{align*}
as $r=\eps \to 0$, and where we have $\tilde h_1(\theta) = (3z^2+2z-1)h(\theta) + h''(\theta)$ and $\tilde h_2(\theta) = (z-1) \big( (z+1)^2 h(\theta) + h''(\theta) \big)$.

The other boundary terms with $w = \ln(r)r^{1+z}h(\theta)$ can be evaluated similarly. 
\end{proof}

\subsection{The condition for a singular term on the bounded domain $D$} \label{sec_a_condition_for_sing_D}
\noindent
In this subsection
we derive a condition for the appearance of a non-zero singular term for solutions of the biharmonic source problem 
on  the domain $D$. This result is similar to the case of the cone $\mathcal C_\omega$ given by 
Lemma \ref{lem_biharmonic_c1_not_zero}.
The derivation is to large extent similar as the arguments in the previous section.
We need, however, to correct the $\eta_1$ that is defined by \eqref{eq_eta_1} on the cone  $\mathcal C_\omega$,
with a correction term $h$, so that when we add $h$ to $\eta_1$ we obtain a solution
to the biharmonic Dirichlet problem on $D$ with the zero boundary conditions
on the entire boundary $\p D$.

We begin by deriving the analogue of Proposition \ref{prop_Biharmonic_asymptotics}
for the bounded domain $D$. In the case of a bounded domain $D$, we need to define an analogue
to the $\eta_1$ that we defined in \eqref{eq_eta_1} for the cone.
Let $\chi \in C^\infty_0(\R^2)$ be a cut-off function satisfying
\begin{equation} \label{eq_chi_def}
\begin{aligned}
\chi(r) = 
\begin{cases}
1, \quad r \in (0,r_0/2), \\
0, \quad  r \in (2r_0,\infty),
\end{cases} \qquad r = \lvert x-x_0 \rvert
\end{aligned}
\end{equation}
where $B(x_0,2r_0) \cap C_\omega \subset D$.

\begin{proposition} \label{prop_singular_expansion_D}
Let $f \in V^0_0(D)$ and  $u \in V^2_0(D)$ solve the equation
\begin{equation} \label{eq_biharmonic_with_source_D_3}
\begin{aligned}
\begin{cases}
\Delta^2 u = f, \qquad \text{ in } D,\\
\p^j_\nu u |_{\p D} = 0, \quad j=0,1.
\end{cases}
\end{aligned}
\end{equation}
Then we have the expansion $u = u_R + u_S$, where
$u_R  \in V_0^{4}(D)$ and 
\begin{equation} \label{eq_sing_expansion_2new}
\begin{aligned}
u_S(r,\theta) = 
\chi \sum_{m=1}^{M'}  c_m \varphi_m(\theta) r^{1 + z'_m} 
+ 
\chi \sum_{m=1}^{M''}  [  c'_m \phi_m(\theta) +   c''_m\ln r \varphi_m(\theta)] r^{1 + z''_m},
\end{aligned}
\end{equation}
where $\chi$ is given by \eqref{eq_chi_def}, and
$z'_m$ and $z''_m$ and $\phi_m,\varphi_m \in C^\infty(0,\omega)\cap H^2_0(0,\omega)$
are as in \eqref{eq_sing_expansion}
and $ c_m, c'_m, c''_m \in \C.$
\end{proposition}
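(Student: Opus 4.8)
The plan is to reduce the problem on the bounded domain $D$ to the cone case already handled by Proposition \ref{prop_Biharmonic_asymptotics}, using the localization provided by the cut-off $\chi$. Since $\p D \setminus \{0\}$ is smooth and $D$ agrees with the cone $\mathcal C_\omega$ inside $B(0,2r_0)$, the singular behaviour of $u$ can only originate at the corner $x_0 = 0$; away from the corner, elliptic regularity up to the smooth boundary gives extra smoothness. First I would fix the cut-off $\chi$ as in \eqref{eq_chi_def}, supported near the corner where $D$ coincides with $\mathcal C_\omega$, and consider the localized function $\chi u$. This function still satisfies the homogeneous Dirichlet conditions $\p^j_\nu(\chi u)|_{\p\mathcal C_\omega} = 0$ for $j=0,1$ near the vertex, because $\chi$ depends only on $r$ and equals $1$ there, so it does not disturb the boundary data on the two straight edges of the cone. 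Moreover $\chi u \in V^2_0(\mathcal C_\omega)$ by Lemma \ref{lem_H20_in_V20} together with the fact that $u \in V^2_0(D)$, and $\chi u$ has bounded support.

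The key computation is to identify the new source term. Applying the biharmonic operator, $\Delta^2(\chi u) = \chi \Delta^2 u + [\Delta^2,\chi]u = \chi f + [\Delta^2,\chi]u =: \tilde f$, where the commutator $[\Delta^2,\chi]$ is a differential operator of order at most three whose coefficients are supported in the annulus $\{r_0/2 \le r \le 2r_0\}$, i.e. away from the corner. On that annulus $u$ is as smooth as the data allow by interior and boundary elliptic regularity, so $[\Delta^2,\chi]u \in V^0_0(\mathcal C_\omega)$; indeed, since its support is bounded away from $r=0$, the weight $r^\beta$ is harmless there and membership in $L^2$ suffices. Hence $\tilde f \in V^0_0(\mathcal C_\omega)$ with bounded support, and Proposition \ref{prop_Biharmonic_asymptotics} applies to $\chi u$ on the cone. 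This yields $\chi u = v_R + v_S$ with $v_R \in V^4_0(\mathcal C_\omega)$ and $v_S$ exactly of the singular form \eqref{eq_sing_expansion}, with the same exponents $z'_m, z''_m$ and angular profiles $\varphi_m, \phi_m$, since these depend only on $\omega$ through the characteristic equation \eqref{eq_char_eq}.

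To conclude, I would multiply the singular part by $\chi$ again and transfer the expansion back to $D$. Write $u = (1-\chi)u + \chi u = (1-\chi)u + v_R + v_S$. The function $(1-\chi)u$ vanishes in a neighbourhood of the corner, so it lies in $V^4_0(D)$ (on its support $r$ is bounded below, and $u$ is smooth enough there). Setting $u_S := \chi v_S$ — which is legitimate because $\chi v_S$ and $v_S$ differ only on $\{r \ge r_0/2\}$ where $v_S$ is smooth and the difference $(\chi-1)v_S$ again belongs to $V^4_0$ — produces precisely the claimed form \eqref{eq_sing_expansion_2new}, with remainder $u_R := (1-\chi)u + v_R + (\chi-1)v_S \in V^4_0(D)$. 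The constants $c_m, c'_m, c''_m$ and the profiles $\varphi_m, \phi_m$ are inherited unchanged from the cone expansion.

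The main obstacle I expect is the regularity bookkeeping for the commutator term and the remainder near the smooth part of $\p D$: one must verify carefully that $[\Delta^2,\chi]u$ genuinely lies in $V^0_0(\mathcal C_\omega)$ (equivalently $L^2$ with support away from the vertex) and that $(1-\chi)u \in V^4_0(D)$, which requires invoking elliptic regularity up to the smooth boundary $\p D \setminus \{0\}$ for the boundary-value problem \eqref{eq_biharmonic_with_source_D_3}. This is where the hypothesis that $\p D \setminus \{0\}$ is smooth is essential, since it guarantees that no additional singularities are introduced along the curved part of the boundary and that the only admissible corner exponents are those of the cone.
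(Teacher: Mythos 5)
Your proposal is correct and follows essentially the same route as the paper: localize with the cut-off $\chi$, extend by zero to the cone $\mathcal C_\omega$, identify the commutator source $\chi f + [\Delta^2,\chi]u$, apply Proposition \ref{prop_Biharmonic_asymptotics}, multiply by $\chi$ again to bring the expansion back to $D$, and absorb $(1-\chi)u$ and the truncation errors into the $V^4_0(D)$ remainder via elliptic regularity away from the corner. The only cosmetic difference is that the paper multiplies the entire cone identity by $\chi$ (so the regular part $\tilde u_R$ is also localized before being transported to $D$), whereas you only cut off the singular part; your remainder is still well defined since it equals $u - \chi v_S$.
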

 
\begin{proof}
  Let $E_0$ be the operator that extends by zero functions defined on $D$ to $C_{\omega}$. The function $E_0 \chi u$ solves the following  biharmonic 
Dirichlet problem in the cone $\mathcal C_\omega$
\begin{align*}
\begin{cases}
  \Delta^2 (E_0\chi u) &= E_0(\chi f + [\Delta^2, \chi] u), \qquad \text{ in } \mathcal C_\omega. \\
  \p^j_\nu (E_0\chi u)|_{\p \mathcal C_\omega} &= 0.
\end{cases}
\end{align*}
Note that the source term is an element of $V_0^0(C_{\omega})$ and $E_0 \chi u \in V_0^2(C_{\Omega})$ since $f \in V_0^0(D)$ and $u \in V_0^2(D)$.
From Proposition \ref{prop_Biharmonic_asymptotics}
it follows that
\begin{equation*} 
\begin{aligned}
E_0 \chi u = \tilde u_R + 
\sum_{m=1}^{M'} c_m \varphi_m(\theta) r^{1 + z'_m} 
+ 
\sum_{m=1}^{M''}  [ c'_m \phi_m(\theta) +  c''_m\ln r \varphi_m(\theta)] r^{1 + z''_m},
\end{aligned}
\end{equation*}
where $\tilde u_R \in V^4_0(\mathcal C_\omega)$. Multiplying with a further $\chi$ and noting that $\chi = 0$ in $C_{\omega} \setminus D$ we get
\begin{equation*}
  E_0 \chi^2 u = \chi \tilde u_R + \chi \sum_{m=1}^{M'} c_m \varphi_m(\theta) r^{1 + z'_m} + \chi \sum_{m=1}^{M''}  [ c'_m \phi_m(\theta) +  c''_m\ln r \varphi_m(\theta)] r^{1 + z''_m},  
\end{equation*}
To obtain the expansion \eqref{eq_sing_expansion_2new}, firstly note that in $D$, we have  $u = E_0 \chi^2 u + (1-\chi^2) u$
and that the function $(1-\chi^2) u \in V^4_0(D)$. The latter comes from $H^4(D)$ elliptic regularity of $u$ and
since $1-\chi^2$ is supported away from the corner. We set $u_R = \chi \tilde u_R + (1-\chi^2) u$.
\end{proof}

\noindent
We now define the modification of $\eta_1$ that gives us a suitable solution to the Dirichlet problem 
for the biharmonic operator on $D$. More specifically we define $ \zeta_1$  as 
\begin{equation} \label{eq_zeta_1}
\begin{aligned}
 \zeta_1 := \chi \eta_1 + h = \chi r^{1-z_1} \varphi_1(\theta) + h , \qquad (r,\theta) \in D,
\end{aligned}
\end{equation}
where $\eta_1$ is given by \eqref{eq_eta_1} and $h$ solves the below. Recall
that the roots $z_m'$ and $z_m''$, and the functions $\varphi_m$ are universal
and do not depend on the source terms of the equations involved. The correction term $h$ solves
\begin{equation*} 
\begin{aligned}
\begin{cases}
\Delta^2 h =  [\chi,\Delta^2]  (r^{1-z_1} \varphi_1 ), \qquad \text{ in } D,\\
\p^j_\nu h |_{\p D} = 0, \qquad\qquad\quad\qquad  j=0,1.
\end{cases}
\end{aligned}
\end{equation*}
Notice that $h \in H^2_0(D)$, so that by Lemma \ref{lem_H20_in_V20} $h \in V^2_0(D)$. By
Proposition \ref{prop_singular_expansion_D} we can write $h$ in the form
\begin{equation} \label{eq_h_expansion}
\begin{aligned}
h = h_R + 
\chi \sum_{m=1}^{M'}  c_m \varphi_m(\theta) r^{1 + z'_m} 
+ 
\chi \sum_{m=1}^{M''}  [  c'_m \phi_m(\theta) +   c''_m\ln r \varphi_m(\theta)] r^{1 + z''_m},
\end{aligned}
\end{equation}
where $h_R \in V^4_0(D)$.
Notice also that
the definition of $\zeta_1$ shows that it is  biharmonic in $D$, which is what we wanted. 
More specifically 
\begin{equation} \label{eq_Delta2_eta1_0_2}
\begin{aligned}
\begin{cases}
\Delta^2 \zeta_1 (x) &= 0, \qquad \text{ for } x 	\in D. \\
\p^j_\nu \zeta_1 |_{\p D} &=0. 
\end{cases}
\end{aligned}
\end{equation}
Note also that we always have that 
\begin{align} \label{eq_zeta1_not_0}
\zeta_1 \neq 0,    
\end{align}
because $\zeta_1 = \chi \eta_1 + h$, and since $\eta_1 \neq 0$. Note that $\chi \eta_1$
and $h$ cannot cancel each other near the corner, since $\chi \eta_1 \sim r^{1-z_1}$ and
$h$ behaves according to \eqref{eq_h_expansion}.

We now derive the analogue of Lemma \ref{lem_biharmonic_c1_not_zero} for the bounded domain $D$. Its
proof is similar to the proof of Lemma \ref{lem_biharmonic_c1_not_zero}, with the main difference
that there are new additional terms due to the correction term $h$.

\begin{proposition} \label{prop_sing_cond_D}
Let  $\omega \in (\omega_0, 2\pi)$ and let
$u \in V^2_0(D)$ solve the equation
\begin{equation} \label{eq_biharmonic_with_source_D}
\begin{aligned}
\begin{cases}
\Delta^2 u = f, \qquad \text{ in } D,\\
\p^j_\nu u |_{\p D} = 0, \quad j=0,1,
\end{cases}
\end{aligned}
\end{equation}
with $f \in V^0_0(D)$, and suppose
$$
(f, \zeta_1)_{L^2(D)} \neq 0,
$$
where $ \zeta_1$ is given by \eqref{eq_zeta_1}. Then we have that 
$
u 	\sim  c_1 r^{1+z_1}\varphi_1(\theta),  
$
where $ c_1 \neq 0$.
\end{proposition}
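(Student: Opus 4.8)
The plan is to mimic the proof of Lemma~\ref{lem_biharmonic_c1_not_zero}, replacing the cone test function $\eta_1$ by its corrected version $\zeta_1$ from \eqref{eq_zeta_1}, and then to control the extra terms produced by the correction $h$. Since $f \in V^0_0(D)$ is supported in the bounded domain $D$ and $\zeta_1 \in L^2(D)$ (its leading behaviour is $r^{1-z_1}\varphi_1$ with $1-z_1 \in (0,1)$), I would first write $(f,\zeta_1)_{L^2(D)} = \lim_{\eps \to 0}(f,\zeta_1)_{L^2(D \setminus B_\eps)}$ with $B_\eps := B(x_0,\eps)$. Setting $\Gamma_\eps := D \cap \{ r = \eps \}$ and using that $\Delta^2 \zeta_1 = 0$ in $D$ together with $\p^j_\nu \zeta_1|_{\p D} = 0$ from \eqref{eq_Delta2_eta1_0_2}, integration by parts over $D \setminus B_\eps$ produces, exactly as in \eqref{eq_as_bndry_ints}, four boundary integrals. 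The integrals over $\p D \setminus B_\eps$ vanish because both $u$ and $\zeta_1$ satisfy the homogeneous Dirichlet and Neumann conditions there --- this is precisely the point of introducing the correction $h$ --- so only the four integrals over $\Gamma_\eps$ survive. The integration by parts is justified despite $u$ not being smooth by using the decomposition $u = u_R + u_S$ of Proposition~\ref{prop_singular_expansion_D}, with $u_R \in V^4_0(D)$ and $u_S$ explicit, exactly as in the cone case. For $\eps < r_0/2$ we have $\chi \equiv 1$ near the corner, so on $\Gamma_\eps$ one may substitute $\zeta_1 = \eta_1 + h$.

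The boundary form over $\Gamma_\eps$ is bilinear, so I would split it into an $\eta_1$-part and an $h$-part. For the $\eta_1$-part I expand $u = u_R + u_S$ according to Proposition~\ref{prop_singular_expansion_D} and repeat verbatim the computation in the proof of Lemma~\ref{lem_biharmonic_c1_not_zero}: Lemma~\ref{lem_bndry_ints} kills the contribution of $u_R \in V^4_0(D)$, Lemma~\ref{lem_sing_bndry_ints} kills every singular term of $u$ of exponent $1+z$ with $\Re z > \Re z_1$ (as well as the logarithmic and the $\phi_m$ terms), and only $c_1 r^{1+z_1}\varphi_1$ pairs resonantly against $\eta_1 = r^{1-z_1}\varphi_1$. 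This yields $\lim_{\eps\to 0}(\,\cdot\,,\eta_1)_{\Gamma_\eps} = c_1\,\kappa$ with $\kappa = 4z_1(z_1^2-1)\|\varphi_1\|^2_{L^2(0,\omega)} - 4z_1\|\varphi_1'\|^2_{L^2(0,\omega)} \neq 0$, exactly as before.

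The genuinely new work is to show that the $h$-part of the boundary form tends to $0$. Here I expand $h = h_R + h_S$ with $h_R \in V^4_0(D)$ and $h_S = \chi\sum_m c_m \varphi_m r^{1+z'_m} + \dots$ the purely singular part from \eqref{eq_h_expansion}, and likewise $u = u_R + u_S$, and estimate each resulting product on $\Gamma_\eps$. The guiding principle is an exponent count: a boundary term pairing $r^{\lambda_1}$ against $r^{\lambda_2}$ scales like $\eps^{\lambda_1 + \lambda_2 - 2}$, and here \emph{every} pairing has $\lambda_1 + \lambda_2 > 2$. Indeed, the singular exponents of both $u_S$ and $h_S$ are of the form $1+z$ with $\Re z > 0$, so any singular--singular pairing is $O(\eps^{\Re(z_a + z_b)}) = o(1)$ by the same computation as in Lemma~\ref{lem_sing_bndry_ints}; and any pairing in which at least one factor lies in $V^4_0(D)$ is handled by the scaled trace estimate of Lemma~\ref{lem_scaled_trace} as in the proof of Lemma~\ref{lem_bndry_ints}, distributing the powers of $r$ so that the factor $r^{-4}$ falls on the $V^4_0$ function (whose norm over the shrinking annulus near $\Gamma_\eps$ is itself $o(1)$) while the remaining positive powers of $r$ land on the singular factor. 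I expect this step to be the main obstacle, essentially bookkeeping: Lemmas~\ref{lem_bndry_ints} and~\ref{lem_sing_bndry_ints} are stated only for pairings against $\eta_1$, so one must record the mild generalisation needed to pair against $h_S$ and $h_R$ (which is even more favourable, since the exponent $1+z'_m$ of $h_S$ is less singular than the exponent $1-z_1$ of $\eta_1$), or simply re-run their short proofs with $\eta_1$ replaced by these functions. Combining the two parts gives $(f,\zeta_1)_{L^2(D)} = c_1\,\kappa$, and since $\kappa \neq 0$ while $(f,\zeta_1)_{L^2(D)} \neq 0$ by hypothesis, we conclude $c_1 \neq 0$. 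As $z_1 = z'_1$ is the root of smallest real part in $(0,1)$ by Lemma~\ref{lem_z1_real} case~(b), the term $c_1 r^{1+z_1}\varphi_1$ is the most singular in the expansion of $u$, whence $u \sim c_1 r^{1+z_1}\varphi_1(\theta)$ with $c_1 \neq 0$.
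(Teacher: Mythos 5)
Your proposal is correct and follows essentially the same route as the paper's proof: pass to the limit over $D \setminus B_\eps$, integrate by parts using $\Delta^2\zeta_1 = 0$ and the boundary conditions on $\p D$, split the boundary form into the $\chi\eta_1$-part (which reproduces the computation of Lemma~\ref{lem_biharmonic_c1_not_zero} and yields $c_1\kappa$ with $\kappa \neq 0$) and the $h$-part (which vanishes by pairing the expansions of $u$ and $h$ term by term and counting exponents). Your explicit remark that the pairings against $h_S$ are strictly more favourable than those against $\eta_1$, since $h_S$ carries exponents $1+z'_m$ rather than $1-z_1$, is exactly the observation the paper uses to dispose of the correction term.
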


\begin{proof}
The proof of the statement is essentially the same as for Lemma \ref{lem_biharmonic_c1_not_zero}.
We need to show that $ c_1 \not = 0$. We will do this by showing  that $ c_1$ can be expressed in terms of  
the inner product $(f, \zeta_1)_{L^2(D)}$.
Let $B_\eps := B(x_0, \eps)$, $\eps > 0$. Clearly
\begin{align} \label{eq_f_eta1_limnew}
(f, \zeta_1)_{L^2( D)} = 
\lim_{\eps \to 0}(f, \zeta_1)_{L^2( D \setminus B_\eps)}.
\end{align}
Set $D_\eps := D \setminus B_\eps$ and $\Gamma_\eps := D \cap \{ r=\eps  \}$.
Keeping in mind that $\zeta_1$ solves the equation \eqref{eq_Delta2_eta1_0_2} in $D_\eps$, 
we have by integrating by parts, similar as in the proof of Lemma \ref{lem_biharmonic_c1_not_zero}, that 
\begin{equation} \label{eq_as_bndry_ints_2}
\begin{aligned}
(f, \zeta_1)_{L^2( D_\eps )}
&=
(\Delta^2 u , \zeta_1)_{L^2( D_\eps )} \\
&= 
(\p_r \Delta u , \zeta_1)_{L^2(\Gamma_\eps )}
- (\Delta u ,\p_r \zeta_1)_{L^2(\Gamma_\eps )} \\
&+ (\p_r u ,\Delta  \zeta_1)_{L^2(\Gamma_\eps )}
- (u ,\p_r \Delta \zeta_1)_{L^2(\Gamma_\eps )}.
\end{aligned}
\end{equation}
Using the expansion \eqref{eq_sing_expansion_2new}, we can write
\begin{equation} \label{eq_u_expansion} 
\begin{aligned}
u(r,\theta) =  
\chi \sum_{m=1}^{M'}  c_m \varphi_m(\theta) r^{1 + z'_m} 
+ 
\chi \sum_{m=1}^{M''} [ c'_m \phi_m(\theta) +  c''_m \ln r \varphi_m(\theta)] r^{1 + z''_m} + u_R,
\end{aligned}
\end{equation}
where $u_R \in V_0^{4}(\mathcal C_\omega)$, and $z'_m$ and $z''_m$ are simple and double roots of equation
\eqref{eq_char_eq_2}. 

Using the definition of $\zeta_1 = \chi \eta_1 + h$, we can split the boundary terms in 
\eqref{eq_as_bndry_ints_2} into an $h$ part and a $\chi \eta_1$ part.
We now claim that the boundary terms from $h$ are such that  
\begin{equation} \label{eq_h_ints}
\begin{aligned}
(\p_r& \Delta u , h)_{L^2(\Gamma_\eps )}
- (\Delta u ,\p_r h)_{L^2(\Gamma_\eps )} \\
&+ (\p_r u ,\Delta  h)_{L^2(\Gamma_\eps )}
- (u ,\p_r \Delta  h)_{L^2(\Gamma_\eps )} \to 0, \quad \text{ as } r= \eps \to 0. 
\end{aligned}
\end{equation}
Let us evaluate these in the limit $r=\eps \to 0$. 
Since $u$ has the expansion \eqref{eq_u_expansion}, and $h$ the expansion \eqref{eq_h_expansion},
we can split the integrals in \eqref{eq_h_ints} into products of the individual terms in the expansions.
Let us begin with  analyzing the case with least decay in $r$, when $r \to 0$.
This is given by the $r^{1+z_1}$ terms in the expansions \eqref{eq_u_expansion} and \eqref{eq_h_expansion}, since
we set $z'_1 = z_1$, and $\Re z_1 < \Re z'_{m>1},\Re z''_{m}$.
Note that the behaviour of this type boundary term 
has in general faster decay than the corresponding boundary term in \eqref{eq_as_bndry_ints}, 
since here the terms behaves as $r^{1+z_1}$, but  \eqref{eq_as_bndry_ints} considers $\eta_1$ which behaves
$r^{1-z_1}$. It is thus to be expected that these boundary terms vanish.
The first of these boundary terms contributing to \eqref{eq_h_ints} is
\begin{align*}
(  \p_r\Delta r^{1+z_1} \varphi_1 , r^{1+z_1} \varphi_1 )_{L^2(\Gamma_\eps )} 
&=
  \int_0^\omega \p_r \Delta (r^{1+z_1} \varphi_1) r^{1+z_1} \varphi_1 r \, d\theta  \\
&=
\int_0^\omega r^{z_1-1} (-1+z_1)((1+z_1)^2\varphi_1 + \varphi''_1) r^{1+z_1} \varphi_1  \, d\theta  \\
&= C r^{2z_1} \to 0
\end{align*}
as $r = \eps \to 0$. Note that the fourth boundary term in \eqref{eq_h_ints} is of the same form.
The second and the third terms in \eqref{eq_h_ints} are of the form
\begin{align*}
(  \Delta r^{1+z_1} \varphi_1 , \p_r r^{1+z_1} \varphi_1 )_{L^2(\Gamma_\eps )} 
&=
\int_0^\omega \Delta (r^{1+z_1} \varphi_1) \p_r (r^{1+z_1} \varphi_1) r \, d\theta \\
&=
  \int_0^\omega r^{z_1-1} ((1+z_1)^2 \varphi_1 + \varphi_1'') (1+z_1) r^{1+z_1} \varphi_1 \, d\theta  \\
&\leq C r^{2z_1} \to 0
\end{align*}
as $r = \eps \to 0$. 
The contribution to \eqref{eq_h_ints} by the other products of the individual terms in the expansions
\eqref{eq_u_expansion} and \eqref{eq_h_expansion} can be estimated similarly. This shows that
that \eqref{eq_h_ints} holds.

Now we turn to the contribution of the $\chi\eta_1$ part of $\zeta_1$ 
to the terms in \eqref{eq_as_bndry_ints_2}. We argue as follows.
As $\chi = 1$ on $\Gamma_\eps$, when $\eps>0$ is small, we see that the boundary terms
in \eqref{eq_as_bndry_ints_2} due to the $\chi \eta_1$ part of $\zeta_1$ are identical to 
those in \eqref{eq_as_bndry_ints}, and we thus
get by identical computations as in the proof of Lemma \ref{lem_biharmonic_c1_not_zero}
that
\begin{align*}
(f,  \chi \eta_1)_{L^2(D)}
&=
 c_1 \Big(
4z_1(z_1^2-1) \| \varphi_1 \|^2_{L^ 2(0,\omega)}  - 4z_1\| \varphi'_1\|^ 2_{L^ 2(0,\omega)} 
\Big).
\end{align*}
Note that the term multiplying $ c_1$ on the right is less than zero, since $z_1 \in (0,1)$.
It follows that $c_1 \neq 0$, if $(f,  \zeta_1)_{L^2(D)} \neq 0$.
\end{proof}

\section{Localization at a  non-convex corner } \label{sec_non_convex_corner}

\noindent
In the following sections we will construct examples of ITEFs that localize at a non-convex corner.
We shall consider the following form of the interior transmission problem
\begin{equation} \label{eq_ite_1}
\begin{aligned}
\begin{cases}
(\Delta + k_1^2) v = 0, \quad \text{ in }\quad D, \\
(\Delta + k_2^2) w = 0, \quad \text{ in }\quad D, \\
\p_\nu^j(v-w)|_{\p D} = 0, \; j=0,1
\end{cases}
\end{aligned}
\end{equation}
where $k_1 \neq k_2$ are positive constants with $v-w \in H^2_0(D)$. 
We will study this problem by writing it as an equivalent 4th order equation. We will then, in the following
sections, find a number of solutions to the fourth order equation by studying a related eigenvalue problem. These solutions will give solutions to \eqref{eq_ite_1} that have a singularity at the corner, so will not be in $H^2(D)$ or other spaces that embed into continuous functions.

Suppose $(v,w)$ solves \eqref{eq_ite_1}. Then $u = v-w \in H^2_0(D)$ solves the 4th order equation below weakly.
\begin{equation} \label{eq_4th_order_problem}
\begin{aligned}
\begin{cases}
(\Delta + k_1^2) (\Delta + k_2^2) u = 0, \quad \text{ in }\quad D  \\
\p_\nu^j u |_{\p D} = 0, \; j=0,1
\end{cases}
\end{aligned}
\end{equation}
On the other hand suppose now that a solution $u$ of \eqref{eq_4th_order_problem} is given.
In this case we can define 
\begin{align*}
\tilde v := \frac{ 1 }{ k^2_2 - k^2_1 } (\Delta + k_2^2) u,
\qquad 
\tilde w := \frac{ 1 }{ k^2_2 - k^2_1 } (\Delta + k_1^2) u,
\end{align*}
It is clear that $(\tilde v, \tilde w)$ solves two first equations in
\eqref{eq_ite_1}. The pair $(\tilde v, \tilde w)$ also satisfies the
boundary- and $H^2$-integrability conditions since
$$
\tilde v - \tilde w = \frac{ 1 }{ k^2_2 - k^2_1 } (k_2^2 - k^2_1) u = u,
$$
and thus $\tilde v - \tilde w \in H^2_0(D)$. We thus see that $(\tilde v ,
\tilde w)$ is a solution to \eqref{eq_ite_1}.  We can thus obtain a solution to
\eqref{eq_ite_1} by solving \eqref{eq_4th_order_problem}.

\subsection{A buckling type problem}\label{sec_buckling}

Here we study a 4th order eigenvalue problem that resembles the problem of a buckling 
plate. Our goal is to obtain a basis of eigenfunctions and eigenvalues 
that we can convert to solutions  of \eqref{eq_4th_order_problem}.

\medskip
\noindent
The idea is to look at the buckling type problem
\begin{align} \label{eq_buckling_2}
\begin{cases}
\Delta(\Delta + \tilde \kappa) u &= \tilde \lambda u , \quad    \text{ on } \quad D, \\
\qquad \p^j_\nu u |_{\p D} &= 0,\qquad j=0,1,
\end{cases}
\end{align}
where $ \tilde \kappa \in \R \setminus \{ 0 \}$ and $\tilde \lambda \in \R$ is traditionally a spectral parameter. We will find solutions to this 
problem, which will then in turn give solutions to the 4th order problem \eqref{eq_4th_order_problem},
which gives us solutions to \eqref{eq_ite_1}.

To relate the solution $u$ of \eqref{eq_4th_order_problem}, to \eqref{eq_buckling_2}, we write
the equation for $u$ as
\begin{equation} \label{eq_4th_order_alternate}
\begin{aligned}
\Delta^2 u + (k_1^2+k^2_2)\Delta u + k^2_1k^2_2 u = 0.
\end{aligned}
\end{equation}
Let's find out how the $\tilde \kappa$ and $\tilde \lambda$ in \eqref{eq_buckling_2}
translate into some $k_1$ and $k_2$ when we rewrite the equation \eqref{eq_buckling_2}
in the form of \eqref{eq_4th_order_alternate}.
It will be convenient to set $a := k^2_1$ and $b := k^2_2$. 
To solve for $a$ and $b$, when $\tilde \kappa$ and $\tilde \lambda$ are given, we use
$$
\tilde \lambda = -ab \quad\text{and} \quad   \tilde \kappa = a+b.
$$
This gives that 
$$
a - \frac{ \tilde \lambda }{ a } -  \tilde \kappa = 0 
\quad \Leftrightarrow \quad
a^2 -  \tilde \kappa a - \tilde \lambda  =0
\quad \Leftrightarrow \quad
a = \frac{ 1 }{ 2 }(  \tilde \kappa \pm \sqrt{ \tilde \kappa^2 + 4\tilde \lambda}) 
$$
Now choose 
$$
a = \frac{ 1 }{ 2 }( \tilde  \kappa + \sqrt{ \tilde \kappa^2 + 4\tilde \lambda}) 
\qquad
b = \frac{ 1 }{ 2 }( \tilde  \kappa - \sqrt{ \tilde \kappa^2 + 4\tilde \lambda}).
$$
This gives
\begin{equation} \label{eq_getting_ks}
\begin{aligned}
k_1 = \pm \sqrt{\frac{ 1 }{ 2 }( \tilde  \kappa + \sqrt{ \tilde \kappa^2 + 4\tilde \lambda}) }
\qquad
k_2 =  \pm \sqrt{\frac{ 1 }{ 2 }( \tilde  \kappa - \sqrt{ \tilde \kappa^2 + 4\tilde \lambda})}.
\end{aligned}
\end{equation}
This shows that if we solve \eqref{eq_buckling_2} we get solutions to \eqref{eq_4th_order_problem}
and \eqref{eq_ite_1}, with $k_1$ and $k_2$ given by \eqref{eq_getting_ks}.
Note that in order to obtain real ITEVs we need that
\begin{equation} \label{eq_real_itev_cond}
\begin{aligned}
 0 < \tilde \kappa^2 + 4\tilde \lambda < \tilde \kappa^2
\quad \Leftrightarrow \quad
\tilde \lambda \in ( -\tilde \kappa^2/4, 0 ).
\end{aligned}
\end{equation}

\subsection{An auxiliary eigenvalue problem} \label{sec_auxiliary}
We formulate an auxiliary eigenvalue problem which will gives us a number of solutions
to the buckling type problem \eqref{eq_buckling_2}.
More specifically we are interested in the eigenvalue problem
$$
K_{\kappa,\lambda} \psi_j = \sigma_j \psi_j,
$$
where $\kappa <0 $, $\lambda > 0$, 
and where
$$
K_{\kappa, \lambda} := \Delta^{-2} (\kappa \Delta - \lambda) : H^2_0(D) \to H^2_0(D),  
$$
where $\Delta^{-2}: H^{-1}(D) \to H^{2}_0(D)$ is the solution operator
to \eqref{eq_biharmonic_with_source_D}, given by Lemma 7.1.1 in \cite{Gr85}. 
 
\begin{lemma} \label{lem_K_compact}
The operator $K_{\kappa,\lambda}: H^2_0(D) \to H^2_0(D)$ is compact.
\end{lemma}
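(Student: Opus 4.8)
The plan is to exhibit $K_{\kappa,\lambda}$ as a composition of three maps in which the compactness is produced entirely by a compact Sobolev embedding, so that no elliptic regularity gain is needed. This last point matters here, since the corner at $x_0$ obstructs any $H^4$-regularity of biharmonic solutions, and so a naive ``gain a derivative and embed back'' argument is not available. Concretely, I would factor
\[
H^2_0(D) \xrightarrow{\;\kappa\Delta - \lambda\;} L^2(D) \hookrightarrow H^{-1}(D) \xrightarrow{\;\Delta^{-2}\;} H^2_0(D),
\]
and argue that the first and third arrows are bounded while the middle inclusion is compact.

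First I would check that $\kappa\Delta - \lambda : H^2_0(D) \to L^2(D)$ is bounded. Since $\Delta$ maps $H^2(D)$ boundedly into $L^2(D)$, and the zeroth-order multiplication by the constant $\lambda$ maps $H^2_0(D)$ boundedly into $L^2(D)$, the operator $\kappa\Delta - \lambda$ is bounded from $H^2_0(D)$ into $L^2(D)$. The third arrow, $\Delta^{-2} : H^{-1}(D) \to H^2_0(D)$, is bounded by construction: it is the solution operator of the biharmonic Dirichlet problem \eqref{eq_biharmonic_with_source_D} furnished by Lemma 7.1.1 in \cite{Gr85}.

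The heart of the argument, and the step I expect to require the most care, is the compactness of the natural inclusion $L^2(D) \hookrightarrow H^{-1}(D)$. I would deduce this from the Rellich--Kondrachov theorem by a duality argument. Since $D$ is bounded, the inclusion $\iota : H^1_0(D) \hookrightarrow L^2(D)$ is compact. Identifying $L^2(D)$ with its own dual and recalling $H^{-1}(D) = (H^1_0(D))^*$, the natural inclusion $L^2(D) \hookrightarrow H^{-1}(D)$ is precisely the adjoint $\iota^*$, which is compact by Schauder's theorem, as the adjoint of a compact operator is compact.

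Finally, composing a bounded operator, a compact operator, and a bounded operator yields a compact operator by the ideal property of compact operators. Hence $K_{\kappa,\lambda} = \Delta^{-2}(\kappa\Delta - \lambda)$ is compact as a map $H^2_0(D) \to H^2_0(D)$, completing the proof. The only genuine subtlety is the middle compact embedding; the boundedness of the two outer maps is routine.
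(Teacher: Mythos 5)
Your proof is correct, but it takes a genuinely different route from the paper's. The paper proves compactness at the \emph{target} end: it invokes Corollary 3.4.3 of \cite{Gr92} to upgrade the solution operator to $\Delta^{-2}:H^{-1}(D)\to H^{5/2+\eps}(D)\cap H^2_0(D)$ and then uses the compact embedding $H^{5/2+\eps}(D)\hookrightarrow H^2(D)$. You instead place the compactness at the \emph{source} end, via the compact inclusion $L^2(D)\hookrightarrow H^{-1}(D)$ (Rellich plus Schauder), and need only the variational boundedness of $\Delta^{-2}:H^{-1}(D)\to H^2_0(D)$ from Lemma 7.1.1 of \cite{Gr85}. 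Your version is more elementary and more robust --- it makes no use of corner regularity theory at all --- whereas the paper's extra half-derivative of regularity is not wasted: the same mapping property $\Delta^{-2}:H^{-1}(D)\to H^{5/2+\eps}(D)$ is reused later (in the proof of Proposition \ref{prop_real_ITEVs}) to bound the operator norm of $K_{\kappa,1}$ by $C\lvert\kappa\rvert$, so the authors get the compactness essentially for free from a fact they need anyway. One small correction to your framing: it is not true that no regularity gain is available at the corner. Full $H^4$-regularity indeed fails for $\omega>\pi$, but the fractional gain to $H^{5/2+\eps}$ does hold and is exactly what the paper exploits; your claim that a ``gain and embed back'' argument is unavailable is therefore overstated, though this does not affect the validity of your own argument.
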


\begin{proof}
By Corollary 3.4.3 in \cite{Gr92} we have that $\Delta^{-2}:H^{-1}(D) \to H^{5/2+\eps}(D) \cap H^2_0(D)$, where $\eps > 0$.
The compactness of $K_{\kappa,\lambda}$ follows from the inclusion operator
$$id : H^{5/2+\eps}(D) \to H^2(D)$$
being compact. See
e.g. Theorem 3.27 p. 87 in \cite{Mc00}.
\end{proof}

\noindent
Next, we will define a useful and equivalent inner product on the Hilbert space $H^2_0(D)$.
We set
$$
\langle \psi , \phi  \rangle_{\Delta^2} := \int_D \Delta \psi \Delta \phi \,dx.
$$
See also \cite{BL13}.
The following Lemma will show that the operator $K_{\kappa,\lambda}$ is positive on $H^{2}_0(D)$
for suitable parameters $\lambda$ and $\kappa$.

\begin{lemma} \label{lem_K_positive}
The operator $K_{\kappa,\lambda}$ is positive on $H^{2}_0(D)$, so that
$$
\langle K_{\kappa,\lambda} \psi , \psi  \rangle_{H^2} \geq 0, \qquad \forall \psi \in H^2_0(D),
$$
when $-\kappa / \lambda \geq \lambda_1^{-1}$, where $\lambda_1$ is the first Dirichlet eigenvalue of the Laplacian
on $D$.
\end{lemma}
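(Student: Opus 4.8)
The plan is to compute the quadratic form $\langle K_{\kappa,\lambda} \psi, \psi \rangle_{H^2}$ directly and to reorganize it into manifestly nonnegative pieces using the definition of $\Delta^{-2}$. First I would fix $\psi \in H^2_0(D)$ and set $\phi := K_{\kappa,\lambda}\psi = \Delta^{-2}(\kappa \Delta \psi - \lambda \psi)$, so that by the defining property of $\Delta^{-2}$ as the solution operator for \eqref{eq_biharmonic_with_source_D} one has $\Delta^2 \phi = \kappa \Delta \psi - \lambda \psi$ with $\phi \in H^2_0(D)$. The natural inner product to use is the equivalent one $\langle \cdot, \cdot \rangle_{\Delta^2}$ just introduced, since it is tailored to the biharmonic operator; indeed $\langle \phi, \psi \rangle_{\Delta^2} = \int_D \Delta \phi \, \Delta \psi \, dx = \int_D (\Delta^2 \phi)\, \psi \, dx$ after integrating by parts twice, the boundary terms vanishing because $\phi, \psi \in H^2_0(D)$. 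I would therefore phrase positivity with respect to $\langle \cdot, \cdot \rangle_{\Delta^2}$; since this is equivalent to the $H^2$ inner product, positivity in one sense gives positivity in the other.

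The computation then proceeds as follows. Substituting $\Delta^2 \phi = \kappa \Delta \psi - \lambda \psi$ gives
\begin{equation*}
\langle K_{\kappa,\lambda}\psi, \psi \rangle_{\Delta^2}
= \int_D (\kappa \Delta \psi - \lambda \psi)\, \psi \, dx
= \kappa \int_D (\Delta \psi)\, \psi \, dx - \lambda \int_D |\psi|^2 \, dx.
\end{equation*}
Integrating the first term by parts, using $\psi \in H^2_0(D)$ so that the boundary contribution drops, yields $\int_D (\Delta \psi)\,\psi \, dx = -\int_D |\nabla \psi|^2 \, dx$. Hence
\begin{equation*}
\langle K_{\kappa,\lambda}\psi, \psi \rangle_{\Delta^2}
= -\kappa \int_D |\nabla \psi|^2 \, dx - \lambda \int_D |\psi|^2 \, dx.
\end{equation*}
Since $\kappa < 0$ and $\lambda > 0$, the first term is nonnegative while the second is nonpositive, so the sign is not yet decided; this is exactly where the hypothesis on $-\kappa/\lambda$ enters.

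The key step is to control the negative term by the positive one using the Poincaré-type inequality furnished by the first Dirichlet eigenvalue. By the variational characterization of $\lambda_1$, every $\psi \in H^1_0(D) \supset H^2_0(D)$ satisfies $\int_D |\nabla \psi|^2 \, dx \geq \lambda_1 \int_D |\psi|^2 \, dx$. Applying this, I estimate
\begin{equation*}
-\kappa \int_D |\nabla \psi|^2 \, dx - \lambda \int_D |\psi|^2 \, dx
\geq \big( -\kappa \, \lambda_1 - \lambda \big) \int_D |\psi|^2 \, dx,
\end{equation*}
where I used $-\kappa > 0$ to preserve the inequality direction. The right-hand side is nonnegative precisely when $-\kappa \, \lambda_1 \geq \lambda$, i.e.\ $-\kappa/\lambda \geq \lambda_1^{-1}$, which is the assumed condition. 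This establishes $\langle K_{\kappa,\lambda}\psi,\psi\rangle_{\Delta^2} \geq 0$, and by equivalence of the two inner products the stated $H^2$-positivity follows.

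I expect the only genuine subtlety to be bookkeeping about which inner product is used: the operator is defined as a map $H^2_0(D) \to H^2_0(D)$, and one must verify that positivity in the $\langle \cdot, \cdot \rangle_{\Delta^2}$ pairing is the intended meaning, or else carry the equivalence constants through explicitly. The identity $\langle K_{\kappa,\lambda}\psi, \psi \rangle_{\Delta^2} = \int_D (\Delta^2 \phi)\, \psi\, dx$ is the cleanest bridge, since it converts the awkward composition $\Delta^{-2}(\kappa\Delta - \lambda)$ into the elementary expression $\kappa \Delta \psi - \lambda \psi$ paired against $\psi$; everything after that is two integrations by parts and one application of the Poincaré inequality, all of which are routine given $\psi \in H^2_0(D)$.
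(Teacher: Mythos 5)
Your proposal is correct and follows essentially the same route as the paper: both pass to the equivalent inner product $\langle\cdot,\cdot\rangle_{\Delta^2}$, reduce the quadratic form to $-\kappa\int_D|\nabla\psi|^2\,dx-\lambda\int_D|\psi|^2\,dx$ via integration by parts, and conclude with the Poincar\'e inequality in terms of the first Dirichlet eigenvalue $\lambda_1$. The only cosmetic difference is that you absorb the estimate into the $\int_D|\psi|^2$ term while the paper keeps $\int_D|\nabla\psi|^2$; the resulting condition $-\kappa/\lambda\geq\lambda_1^{-1}$ is identical.
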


\begin{proof}
Let $\psi \in H^2_0(D)$. We utilize the alternative inner product and write
\begin{align*}
\langle K_{\kappa,\lambda} \psi , \psi  \rangle_{\Delta^2} 
&= \int_D \Delta \Delta^{-2} (\kappa\Delta - \lambda) \psi \Delta \psi \,dx\\
&= \int_D \Delta^2 \Delta^{-2} (\kappa\Delta - \lambda) \psi  \psi \,dx\\
&= \int_D (\kappa\Delta - \lambda) \psi  \psi \,dx\\
&= -\kappa \int_D |\nabla \psi|^2 \, dx  - \lambda \int_D \psi^2  \,dx.
\end{align*}
Recall that the Poincaré inequality says that 
$$
\| \psi \|^2_{L^2(D)} \leq \lambda_1^{-1} \| \nabla \psi \|^2_{L^2(D)},
$$
where $\lambda_1$ is the first Dirichlet eigenvalue of the Laplacian.
We thus obtain that  
\begin{align*}
\langle K_{\kappa,\lambda} \psi , \psi  \rangle_{\Delta^2} 
\geq 
(-\kappa - \lambda \lambda^{-1}_1) \| \nabla \psi \|^2_{L^2(D)}.
\end{align*}
The operator $K_{\kappa,\lambda}$ is thus positive if 
\begin{align} \label{eq_K_pos_cond}
-\kappa - \lambda \lambda^{-1}_1 \geq 0 
\quad \Leftrightarrow \quad
\frac{ -\kappa }{\lambda  } \geq \lambda_1^{-1}.
\end{align}
\end{proof}

\noindent
It is also easy to see that $K_{\kappa, \lambda}$ is self-adjoint as the next lemma shows.

\begin{lemma} \label{lem_K_self_adjoint}
The operator $K_{\kappa, \lambda} : H^2_0(D) \to H^2_0(D)$ is self-adjoint.  
\end{lemma}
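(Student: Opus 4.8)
The plan is to reuse, almost verbatim, the bilinear-form computation from the proof of Lemma \ref{lem_K_positive}, now carried out for two possibly distinct functions. Since $K_{\kappa,\lambda}$ is bounded --- indeed compact, by Lemma \ref{lem_K_compact} --- it suffices to establish that it is symmetric with respect to the equivalent inner product $\langle\cdot,\cdot\rangle_{\Delta^2}$ on $H^2_0(D)$, i.e. that $\langle K_{\kappa,\lambda}\psi,\phi\rangle_{\Delta^2} = \langle\psi, K_{\kappa,\lambda}\phi\rangle_{\Delta^2}$ for all $\psi,\phi\in H^2_0(D)$; a bounded symmetric operator on a Hilbert space is self-adjoint.

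First I would fix $\psi,\phi\in H^2_0(D)$ and set $g := \Delta^{-2}(\kappa\Delta-\lambda)\psi \in H^2_0(D)$. By the defining weak formulation of the biharmonic solution operator $\Delta^{-2}$ (Lemma 7.1.1 in \cite{Gr85}), the function $g$ satisfies $\int_D \Delta g\,\Delta v\,dx = \langle(\kappa\Delta-\lambda)\psi, v\rangle$ for every test function $v\in H^2_0(D)$, where the right-hand side is the $H^{-1}$--$H^1_0$ duality pairing (note $(\kappa\Delta-\lambda)\psi\in H^{-1}(D)$ since $\psi\in H^2_0(D)\subset H^1_0(D)$). Taking $v=\phi$ gives directly $\langle K_{\kappa,\lambda}\psi,\phi\rangle_{\Delta^2} = \langle(\kappa\Delta-\lambda)\psi,\phi\rangle$, which is the same step used in Lemma \ref{lem_K_positive} to pass from $\int_D \Delta\Delta^{-2}(\kappa\Delta-\lambda)\psi\,\Delta\psi\,dx$ to $\int_D (\kappa\Delta-\lambda)\psi\,\psi\,dx$.

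Next I would unwind the pairing on the right exactly as there: since $\psi,\phi\in H^1_0(D)$, the distributional identity $\langle\kappa\Delta\psi,\phi\rangle = -\kappa\int_D \nabla\psi\cdot\nabla\phi\,dx$ together with $\langle-\lambda\psi,\phi\rangle = -\lambda\int_D\psi\phi\,dx$ yields $\langle K_{\kappa,\lambda}\psi,\phi\rangle_{\Delta^2} = -\kappa\int_D\nabla\psi\cdot\nabla\phi\,dx - \lambda\int_D\psi\phi\,dx$. The right-hand side is manifestly symmetric under the interchange of $\psi$ and $\phi$, which gives the required symmetry, hence self-adjointness.

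The only point requiring care is the justification of moving one Laplacian across in $\int_D\Delta g\,\Delta\phi\,dx$ and of rewriting the duality pairing as gradient and $L^2$ integrals. Both are legitimate because $g,\phi\in H^2_0(D)$, so all boundary contributions in the integration by parts vanish, and because $(\kappa\Delta-\lambda)\psi\in H^{-1}(D)$ pairs naturally with $\phi\in H^1_0(D)$. I do not expect any substantive obstacle: the computation is identical in structure to that of Lemma \ref{lem_K_positive}, the sole difference being that the two slots now carry different functions, and the resulting bilinear form is visibly symmetric.
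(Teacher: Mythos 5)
Your proposal is correct and follows essentially the same route as the paper: unwind $\Delta^{-2}$ via the weak formulation to reduce $\langle K_{\kappa,\lambda}\psi,\phi\rangle_{\Delta^2}$ to the pairing $\langle(\kappa\Delta-\lambda)\psi,\phi\rangle$, observe that this bilinear form is symmetric in $\psi$ and $\phi$ (the paper moves the Laplacian across directly, you write it as $-\kappa\int\nabla\psi\cdot\nabla\phi-\lambda\int\psi\phi$, which is the same computation), and combine symmetry with boundedness to conclude self-adjointness. The only cosmetic difference is that you deduce boundedness from compactness (Lemma \ref{lem_K_compact}) rather than from the mapping properties of $(\kappa\Delta-\lambda)$ and $\Delta^{-2}$ as the paper does; both are fine.
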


\begin{proof}
We will show that $K_{\kappa, \lambda}$ is symmetric and bounded, from which the claim follows.
We have that
\begin{align*}
\langle K_{\kappa,\lambda} \psi , \phi  \rangle_{\Delta^2} 
&= \int_D \Delta \Delta^{-2} (\kappa\Delta - \lambda) \psi \Delta \phi \,dx\\
&= \int_D  (\kappa\Delta - \lambda) \psi  \phi \,dx\\
&= \int_D  \psi (\kappa\Delta - \lambda) \phi \,dx\\
&= \int_D  \phi \Delta \Delta^{-2} (\kappa\Delta - \lambda) \psi \,dx\\
&= \langle \psi, K_{\kappa,\lambda} \phi   \rangle_{\Delta^2} 
\end{align*}
The operator $K_{\kappa,\lambda}$ is bounded, since $(\Delta -\lambda): H^2_0(D) \to L^2(D)$
is bounded, and $\Delta^{-2} : L^2(D) \to H^4(D)$ is bounded.
\end{proof}

\noindent
Lemmas \ref{lem_K_compact}, \ref{lem_K_positive} and \ref{lem_K_self_adjoint} and the 
basic spectral theory of compact operators now gives the following characterization 
of the spectrum and eigenfunctions of $K_{\kappa, \lambda}$. See
e.g. Theorem VI.16 in \cite{RS72}.

\begin{proposition} \label{prop_K_spectrum}
$K_{\kappa,\lambda}: H^2_0(D) \to H^2_0(D)$ is a compact self adjoint operator that is positive, 
it has a spectrum consisting of eigenvalues
$$
\sigma_j > 0, \qquad \sigma_1 \geq \sigma_2 \geq \dots \geq \sigma_j \to 0,
$$
as $j \to \infty$, and corresponding eigenfunctions $\psi_j \in H^2_0(D)$ solving
$$
K_{\kappa,\lambda} \psi_j = \sigma_j \psi_j.
$$
The eigenfunctions $\{ \psi_j\}$ form a complete orthonormal basis of $H^2_0(D)$.
\end{proposition}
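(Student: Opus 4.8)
The plan is to observe that the substantive work is already done in the three preceding lemmas, so the proof is mostly an application of abstract spectral theory. Lemmas \ref{lem_K_compact}, \ref{lem_K_positive}, and \ref{lem_K_self_adjoint} establish that $K_{\kappa,\lambda}$ is compact, positive, and self-adjoint on the Hilbert space $H^2_0(D)$ equipped with the inner product $\langle \cdot,\cdot\rangle_{\Delta^2}$. First I would invoke the Hilbert--Schmidt spectral theorem for compact self-adjoint operators (Theorem VI.16 in \cite{RS72}): this immediately produces a sequence of real eigenvalues $\sigma_j$ with $|\sigma_1|\ge|\sigma_2|\ge\cdots\to 0$, each nonzero eigenvalue of finite multiplicity, and an orthonormal system of eigenfunctions $\psi_j$. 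Positivity from Lemma \ref{lem_K_positive} then forces $\sigma_j=\langle K_{\kappa,\lambda}\psi_j,\psi_j\rangle_{\Delta^2}\ge 0$, so the eigenvalues may be arranged decreasingly as claimed.

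The one assertion not literally contained in the three lemmas is the strict positivity $\sigma_j>0$ together with completeness of $\{\psi_j\}$ in all of $H^2_0(D)$. Both follow at once from injectivity of $K_{\kappa,\lambda}$, i.e. $\Ker K_{\kappa,\lambda}=\{0\}$: for a compact self-adjoint operator the eigenvectors attached to the nonzero eigenvalues, together with any orthonormal basis of the kernel, span the space, so removing the kernel leaves the $\psi_j$ with $\sigma_j>0$ as a complete orthonormal basis and rules out a zero eigenvalue. Hence the heart of the argument is the injectivity step, which I expect to be the main (if modest) obstacle.

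For injectivity I would argue as follows. Since the solution operator $\Delta^{-2}$ (Lemma 7.1.1 in \cite{Gr85}) is injective, the equation $K_{\kappa,\lambda}\psi=\Delta^{-2}(\kappa\Delta-\lambda)\psi=0$ forces $(\kappa\Delta-\lambda)\psi=0$, that is $\Delta\psi=(\lambda/\kappa)\psi$ with $\psi\in H^2_0(D)$. Integrating by parts and using that both $\psi$ and $\p_\nu\psi$ vanish on $\p D$ gives $\|\nabla\psi\|^2_{L^2(D)}=(-\lambda/\kappa)\|\psi\|^2_{L^2(D)}$, and the Poincaré inequality $\|\psi\|^2\le\lambda_1^{-1}\|\nabla\psi\|^2$ then yields $-\kappa/\lambda\le\lambda_1^{-1}$. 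Under the standing hypothesis $-\kappa/\lambda\ge\lambda_1^{-1}$ of Lemma \ref{lem_K_positive} this can hold only with equality, in which borderline case $\psi$ must saturate Poincaré and hence be a first Dirichlet eigenfunction; but such a function has nonvanishing normal derivative on $\p D$ by the Hopf lemma and cannot belong to $H^2_0(D)$, so $\psi=0$. Alternatively, whenever the parameters satisfy the strict inequality $-\kappa/\lambda>\lambda_1^{-1}$, the estimate in Lemma \ref{lem_K_positive} reads $\langle K_{\kappa,\lambda}\psi,\psi\rangle_{\Delta^2}\ge c\|\nabla\psi\|^2$ with $c>0$, so $K_{\kappa,\lambda}\psi=0$ directly forces $\nabla\psi=0$ and thus $\psi=0$. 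Either way $K_{\kappa,\lambda}$ is injective, which upgrades the spectral decomposition to the stated complete orthonormal basis with strictly positive eigenvalues and completes the proof.
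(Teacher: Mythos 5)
Your proof is correct and follows essentially the same route as the paper, which simply combines Lemmas \ref{lem_K_compact}, \ref{lem_K_positive} and \ref{lem_K_self_adjoint} with the Hilbert--Schmidt theorem (Theorem VI.16 in \cite{RS72}). The injectivity argument you add --- needed to rule out a zero eigenvalue and to upgrade ``complete modulo $\Ker K_{\kappa,\lambda}$'' to genuine completeness of $\{\psi_j\}$ --- does not appear in the paper and is a correct and worthwhile supplement; note that in the regime actually used later ($\lambda=1$ and $-\kappa$ large, so $-\kappa/\lambda>\lambda_1^{-1}$ strictly) your shorter coercivity argument already suffices, and the borderline Hopf-lemma case could alternatively be dispatched by unique continuation from the vanishing Cauchy data of $\psi\in H^2_0(D)$ on $\p D$.
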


\noindent
Note that the point of looking at the operator $K_{\kappa, \lambda}$ is that the eigenfunctions
solve also a version of problem \eqref{eq_buckling_2}. More specifically
the $\psi_j$ solve
\begin{equation} \label{eq_other_ev_problem}
\begin{aligned}
\Delta ( \Delta  - \tfrac{\kappa}{\sigma_j}) \psi_j = - \tfrac{\lambda}{\sigma_j} \psi_j.
\end{aligned}
\end{equation}
We thus get solutions to \eqref{eq_buckling_2} by setting 
\begin{equation} \label{eq_tilde_translation}
\begin{aligned}
\tilde \kappa_j := - \tfrac{\kappa}{\sigma_j}, \qquad
\tilde \lambda_j := - \tfrac{\lambda}{\sigma_j}. 
\end{aligned}
\end{equation}
Note that \eqref{eq_other_ev_problem} should be interpreted weakly, since $\psi_j \in H^2_0(D)$. And therefore 
\eqref{eq_other_ev_problem} means that
\begin{equation} \label{eq_other_ev_problem_weak}
\begin{aligned}
\int_D \Delta \psi_j \Delta \varphi + \tilde \kappa_j \nabla \psi_j \cdot \nabla \varphi \,dx =  - \tilde \lambda_j \int_D \psi_j\varphi\,dx
\qquad \forall \varphi \in H^2_0(D).
\end{aligned}
\end{equation}

Recall that if we want real ITEVs we need condition \eqref{eq_real_itev_cond} to be satisfied.
We thus want that 
$$
\tilde \lambda_j \in \Big ( \frac{ - \tilde \kappa_j^2 }{ 4  }, 0 \Big)
$$
This gives the condition
\begin{equation} \label{eq_itev_cond}
\begin{aligned}
- \tfrac{\lambda}{\sigma_j}  \in \Big ( -\frac{ \kappa^2 }{ 4 \sigma_j^2 }, \,0 \Big)
\quad \Leftrightarrow \quad
- \lambda  \in \Big ( -\frac{ \kappa^2 }{ 4 \sigma_j }, \,0 \Big),
\end{aligned}
\end{equation}
which clearly holds for any fixed $\kappa < 0$ and $\lambda > 0$ when $\sigma_j$ becomes small.

\medskip
\noindent
Let us finally show that  we can choose the parameters $\lambda > 0$ and $\kappa < 0$,
so that all the ITEVs given by \eqref{eq_getting_ks} from the $(\tilde \kappa_j, \tilde \lambda_j)$, $j=1,2,3,\dots$ are real.
We first recall the definition of the spectral radius of $K_{\kappa, \lambda}$. This is defined as
\begin{equation} \label{eq_spec_radius}
\begin{aligned}
r(K_{\kappa, \lambda}) := \sup \{ |\sigma_j| \}.
\end{aligned}
\end{equation}
Since $H^2_0(D)$ is a Hilbert space we have that
\begin{equation} \label{eq_gelfand_formula}
\begin{aligned}
r(K_{\kappa, \lambda}) = \| K_{\kappa, \lambda} \|_{H^2_0(D) \to H^2_0(D)}.
\end{aligned}
\end{equation}
See Theorem VI.6 in \cite{RS72}.

\begin{proposition} \label{prop_real_ITEVs}
Set $\lambda = 1$, and assume  that $-\kappa>0$ is large, then all ITEVs given
by the $(\tilde \kappa_j, \tilde \lambda_j)$
in \eqref{eq_tilde_translation}, by means of \eqref{eq_getting_ks}, are real.
\end{proposition}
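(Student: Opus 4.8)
The plan is to show that by taking $-\kappa$ sufficiently large (with $\lambda=1$ fixed), every eigenvalue $\sigma_j$ becomes small enough to force the condition \eqref{eq_itev_cond} for all $j$ simultaneously. The crux is that \eqref{eq_itev_cond} reads $\lambda < \kappa^2/(4\sigma_j)$, i.e. $\sigma_j < \kappa^2/(4\lambda)$. So the task reduces to controlling the largest eigenvalue $\sigma_1 = r(K_{\kappa,\lambda})$ uniformly in $j$, and comparing its growth in $\kappa$ against the growth of $\kappa^2/4$.

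First I would invoke \eqref{eq_spec_radius} and \eqref{eq_gelfand_formula} to replace $\sup_j |\sigma_j|$ by the operator norm $\| K_{\kappa,\lambda}\|_{H^2_0(D)\to H^2_0(D)}$. Next I would estimate this operator norm in terms of $\kappa$ and $\lambda$. Writing $K_{\kappa,\lambda} = \Delta^{-2}(\kappa\Delta - \lambda)$, I would bound $\|K_{\kappa,\lambda}\| \leq |\kappa|\,\|\Delta^{-2}\Delta\| + \lambda\,\|\Delta^{-2}\|$, where the operator norms are taken in the $\langle\cdot,\cdot\rangle_{\Delta^2}$ inner product (equivalent to the $H^2_0$ norm by Lemma \ref{lem_H20_in_V20} and elliptic regularity). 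A clean way is to use the self-adjoint positive structure: from the computation in the proof of Lemma \ref{lem_K_positive},
\begin{align*}
\langle K_{\kappa,\lambda}\psi,\psi\rangle_{\Delta^2} = -\kappa\|\nabla\psi\|_{L^2}^2 - \lambda\|\psi\|_{L^2}^2,
\end{align*}
and then controlling $\|\nabla\psi\|_{L^2}^2$ and $\|\psi\|_{L^2}^2$ by the $\langle\cdot,\cdot\rangle_{\Delta^2}$-norm via Poincaré-type inequalities gives $\sigma_1 = \sup_{\psi} \langle K_{\kappa,\lambda}\psi,\psi\rangle_{\Delta^2}/\|\psi\|_{\Delta^2}^2 \leq C(-\kappa) + C'\lambda$ for constants $C, C'$ depending only on $D$. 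With $\lambda = 1$ this yields $\sigma_1 \leq C(-\kappa) + C'$.

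With this bound in hand, the desired inequality $\sigma_j \leq \sigma_1 \leq C(-\kappa)+C'$ must be shown to be strictly smaller than $\kappa^2/(4\lambda) = \kappa^2/4$ for $-\kappa$ large. Since the left side grows linearly in $-\kappa$ while the right side grows quadratically, there is a threshold $\kappa_0$ such that $C(-\kappa) + C' < \kappa^2/4$ for all $-\kappa > -\kappa_0$. For such $\kappa$ we then have $\sigma_j < \kappa^2/4 = \kappa^2/(4\lambda)$ for every $j$, which is exactly \eqref{eq_itev_cond}, and hence by \eqref{eq_real_itev_cond} and \eqref{eq_getting_ks} all the $(\tilde\kappa_j,\tilde\lambda_j)$ produce real $k_1, k_2$.

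The main obstacle I anticipate is obtaining the linear-in-$\kappa$ upper bound on $\sigma_1$ cleanly, i.e. verifying that the implicit constants in the Poincaré and elliptic estimates do not secretly depend on $\kappa$. The key point that makes the argument work is the mismatch in growth rates: the numerator of the ITEV condition scales like $\kappa^2$ while the spectral radius grows at most linearly in $\kappa$, so the gap opens up for large $-\kappa$. Once the operator-norm bound $\|K_{\kappa,1}\| \leq C(-\kappa) + C'$ is secured, the conclusion is immediate.
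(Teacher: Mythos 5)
Your proposal is correct and follows essentially the same route as the paper: reduce the realness condition to $\sigma_j < \kappa^2/4$, bound $\sup_j \sigma_j = \lVert K_{\kappa,1}\rVert$ linearly in $\lvert\kappa\rvert$, and exploit the linear-versus-quadratic growth mismatch for large $-\kappa$. The only (harmless) variation is that you bound the top eigenvalue via the Rayleigh quotient $\langle K\psi,\psi\rangle_{\Delta^2}/\lVert\psi\rVert_{\Delta^2}^2$ together with Poincar\'e-type inequalities, whereas the paper gets the same linear bound from the elliptic estimate $\lVert\Delta^{-2}f\rVert_{H^2}\leq C\lVert f\rVert_{L^2}$ applied to $f=(\kappa\Delta-1)u$; both yield constants independent of $\kappa$.
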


\begin{proof}
Fix $\lambda = 1 $. 
Let's first check that the condition for the positivity of $K_{\kappa, \lambda}$ in \eqref{eq_K_pos_cond} can be satisfied. 
The condition in \eqref{eq_K_pos_cond} is 
$$
-\kappa \geq \lambda_1^{-1},
$$
which clearly holds when $|\kappa|$ is large.

Next we want to show that the condition in \eqref{eq_itev_cond}, which guarantees that the ITEVs will be real, 
can be satisfied. We need to show that
\begin{equation} \label{eq_realness_lambda_1}
\begin{aligned}
-1  \in \Big ( -\frac{ \kappa^2 }{ 4 \sigma_j }, \,0 \Big).
\end{aligned}
\end{equation}
for large $-\kappa>0$. We can estimate the size of $\sigma_j$ in terms of the parameter $\kappa$.
From \eqref{eq_gelfand_formula} we have that
$$
\sigma_j \leq r(K_{\kappa, 1 }) = \| K_{\kappa, 1} \|_{H^2_0(D) \to H^2_0(D)}.
$$
We have the mapping property $\Delta^{-2}:H^{-1}(D) \to H^{5/2+\eps}(D) \cap H^2_0(D)$ for any $\eps > 0$ by Corollary~3.4.3 in \cite{Gr92}.
Thus
\begin{align*}
\| K_{\kappa, 1} u \|_{H^2(D)} 
= \big \| \Delta^{-2} (\kappa \Delta - 1)  u \big \|_{H^2(D)} 
\leq C \big\| (\kappa \Delta - 1)  u \big \|_{L^2(D)}
\leq C |\kappa|\big \| u \big \|_{H^2(D)},
\end{align*}
when $-\kappa > 1$. By combining the above, we get
$$
\sigma_j \leq \| K_{\kappa, 1} \|_{H^2_0(D) \to H^2_0(D)} \leq C |\kappa|.
$$
For large $-\kappa > 0$, we have hence that
$$
\frac{ \kappa^2 }{ 4 \sigma_j } \geq  \frac{ \kappa^2 }{ 4C |\kappa| } \geq \frac{ |\kappa| }{ 4C} > 1,  
$$
and therefore \eqref{eq_realness_lambda_1} holds.
\end{proof}

\subsection{Finding a singular ITEF } \label{sec_sing_ITEF}
In this subsection we derive the existence of ITEFs that localize at a corner
using the conditions for the appearance of a singularity, which we derived in section \ref{sec_a_condition_for_sing}.

\medskip
\noindent
The following lemma shows that since we have many ITEFs $\psi_j$ we can find one that has $\Delta \psi_j$ singular.
We do this using the completeness of the set $\{\psi_j\}$ and the condition of Proposition \ref{prop_sing_cond_D}.

\begin{lemma} \label{lem_f_sing}
Let $\omega \in (\omega_0,2\pi)$ and let the eigenfunctions $\psi_j$,
$j=1,2,\dots$, of $K_{\kappa,\lambda}$ be given by Proposition
\ref{prop_K_spectrum}.
Then there exists $j \in \N$ and $\alpha \in (0,1)$ such that
$$
\Delta \psi_j \sim C(\theta) r^{-\alpha} \qquad \mbox{ as } \quad r \to 0,
$$
where $C$ is not identically zero.
\end{lemma}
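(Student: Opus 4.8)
The plan is to realise each eigenfunction $\psi_j$ as a solution of a biharmonic source problem and then invoke the singularity criterion of Proposition \ref{prop_sing_cond_D}. From \eqref{eq_other_ev_problem} and \eqref{eq_tilde_translation} the eigenfunction $\psi_j\in H^2_0(D)$ (taken real-valued) solves, weakly,
\[
\Delta^2\psi_j = f_j, \qquad f_j := \tfrac{1}{\sigma_j}(\kappa\Delta - \lambda)\psi_j,
\]
together with the homogeneous Dirichlet data $\partial_\nu^i\psi_j|_{\partial D}=0$, $i=0,1$. Since $\psi_j\in H^2_0(D)$ we have $\Delta\psi_j,\psi_j\in L^2(D)$, so $f_j\in V^0_0(D)$, and by Lemma \ref{lem_H20_in_V20} also $\psi_j\in V^2_0(D)$; thus the hypotheses of Proposition \ref{prop_sing_cond_D} hold for $\omega\in(\omega_0,2\pi)$. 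Consequently it suffices to produce a single index $j$ with $(f_j,\zeta_1)_{L^2(D)}\neq0$: Proposition \ref{prop_sing_cond_D} then yields $\psi_j\sim c_1 r^{1+z_1}\varphi_1(\theta)$ with $c_1\neq0$.

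To find such a $j$ I would argue by contradiction, exploiting completeness. Define the functional $\ell(\psi):=((\kappa\Delta-\lambda)\psi,\zeta_1)_{L^2(D)}$ on $H^2_0(D)$. Since $\zeta_1\in L^2(D)$ (it equals $\chi\eta_1+h$ with $\eta_1$ bounded near the corner and $h\in H^2_0(D)$), the functional $\ell$ is bounded, $|\ell(\psi)|\le\|(\kappa\Delta-\lambda)\psi\|_{L^2}\|\zeta_1\|_{L^2}\le C\|\psi\|_{H^2}$. Suppose, for contradiction, that $(f_j,\zeta_1)_{L^2(D)}=0$ for every $j$. As $(\kappa\Delta-\lambda)\psi_j=\sigma_j f_j$ we get $\ell(\psi_j)=\sigma_j(f_j,\zeta_1)_{L^2(D)}=0$ for all $j$, and since $\{\psi_j\}$ is a complete basis of $H^2_0(D)$ (Proposition \ref{prop_K_spectrum}) and $\ell$ is continuous, $\ell\equiv0$ on $H^2_0(D)$. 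Testing against $\psi\in C_0^\infty(D)$ and moving the Laplacian onto $\zeta_1$ shows that $\zeta_1$ solves $\kappa\Delta\zeta_1-\lambda\zeta_1=0$ in the sense of distributions, i.e.\ $\Delta\zeta_1=\tfrac{\lambda}{\kappa}\zeta_1$. But $\zeta_1$ is biharmonic by \eqref{eq_Delta2_eta1_0_2}, so applying $\Delta$ once more gives $0=\Delta^2\zeta_1=(\lambda/\kappa)^2\zeta_1$; since $\lambda/\kappa\neq0$ this forces $\zeta_1=0$, contradicting \eqref{eq_zeta1_not_0}. Hence $(f_j,\zeta_1)_{L^2(D)}\neq0$ for some $j$.

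For that $j$, Proposition \ref{prop_sing_cond_D} gives $\psi_j\sim c_1 r^{1+z_1}\varphi_1(\theta)$ with $c_1\neq0$ and $z_1\in(0,1)$ real (Lemma \ref{lem_z1_real}, case (b)). Computing the Laplacian of the leading term in polar coordinates,
\[
\Delta\bigl(r^{1+z_1}\varphi_1\bigr)=r^{z_1-1}\bigl[(1+z_1)^2\varphi_1(\theta)+\varphi_1''(\theta)\bigr],
\]
so that $\Delta\psi_j\sim C(\theta)r^{-\alpha}$ with $\alpha:=1-z_1\in(0,1)$ and $C(\theta):=c_1[(1+z_1)^2\varphi_1+\varphi_1'']$. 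The remaining pieces of the expansion \eqref{eq_sing_expansion_2new} (the terms with $\Re z_m',\Re z_m''>z_1$ and the logarithmic terms) and the regular part $u_R\in V^4_0(D)$ all produce strictly less singular contributions after applying $\Delta$, so they do not affect the leading behaviour. Finally $C\not\equiv0$: were $C\equiv0$, then $\varphi_1''=-(1+z_1)^2\varphi_1$, whose solutions are $A\cos((1+z_1)\theta)+B\sin((1+z_1)\theta)$, and the four conditions $\varphi_1(0)=\varphi_1(\omega)=\varphi_1'(0)=\varphi_1'(\omega)=0$ coming from $\varphi_1\in H^2_0(0,\omega)$ would force $A=B=0$, contradicting the normalization \eqref{eq_normalization}.

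I expect the existence step to be the main obstacle: the delicate point is converting the qualitative completeness of $\{\psi_j\}$ into the quantitative statement that at least one $\psi_j$ actually carries the singular term $c_1 r^{1+z_1}\varphi_1$. The decisive observation is that the simultaneous vanishing of all the pairings $(f_j,\zeta_1)$ would make the nonzero function $\zeta_1$ both biharmonic and a Helmholtz eigenfunction with negative spectral parameter $\lambda/\kappa$, which is impossible.
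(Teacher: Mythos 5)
Your proof is correct and has the same overall architecture as the paper's: realise $\psi_j$ as the solution of $\Delta^2\psi_j=\sigma_j^{-1}(\kappa\Delta-\lambda)\psi_j$ with homogeneous Dirichlet data, use completeness of $\{\psi_j\}$ to find one index whose pairing with $\zeta_1$ is non-zero, and feed that into Proposition~\ref{prop_sing_cond_D}. Where you differ is in how the two non-degeneracy steps are executed, and in both cases your route is the more elementary one. For the existence of $j$, the paper solves an auxiliary problem $\Delta^2 w=-\lambda\zeta_1+\kappa\Delta\zeta_1$ in $H^2_0(D)$, identifies the coefficients $b_j$ of $w$ with the pairings $(f_j,\zeta_1)$, and excludes $w=0$ via unique continuation; you instead dualize, showing that if all pairings vanished the bounded functional $\ell(\psi)=((\kappa\Delta-\lambda)\psi,\zeta_1)_{L^2}$ would vanish identically, forcing $\Delta\zeta_1=(\lambda/\kappa)\zeta_1$ distributionally and hence $0=\Delta^2\zeta_1=(\lambda/\kappa)^2\zeta_1$, contradicting \eqref{eq_zeta1_not_0}. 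This avoids both unique continuation and the need to solve a biharmonic problem whose source $\kappa\Delta\zeta_1\sim r^{-1-z_1}$ is only in $H^{-1}(D)$ near the corner, a point the paper's version must handle as a duality pairing. Likewise, for $C\not\equiv0$ the paper invokes unique continuation, while you solve $\varphi_1''=-(1+z_1)^2\varphi_1$ explicitly and use the Cauchy data at $\theta=0$; both work. Your treatment of the remainder terms (double roots have $\Re z_m''\ge1$ by Lemma~\ref{lem_z1_real}, so they contribute at worst $O(\ln r)$, and the $V^4_0$ part is controlled only in a weighted $L^2$ sense) is at the same level of informality as the paper's use of ``$\sim$'', so no gap relative to the original.
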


\begin{proof}
Let $z'_1$ be the simple root given by Lemma \ref{lem_z1_real} and let the corresponding
$\zeta_1$ be as in Proposition \ref{prop_sing_cond_D}. 
Let $w \in H^2_0(D)$ be the unique solution (see e.g. Lemma 7.1.1 in \cite{Gr85}) of 
\begin{align} \label{eq_f_bvp}
\begin{cases}
\Delta^2 w &= - \lambda \zeta_1 + \kappa \Delta \zeta_1  , \quad    \text{ on } \quad D, \\
 \p^j_\nu w |_{\p D} &= 0,\qquad j=0,1,
\end{cases}
\end{align}
where $\zeta_1$ is given by \eqref{eq_zeta_1}.
Note that the source term in the above equation is non-zero due to \eqref{eq_zeta1_not_0}.
If $- \lambda \zeta_1 + \kappa \Delta \zeta_1 = 0$, then as $\p^j_\nu \zeta_1 = 0$ on
$\p D$, the unique continuation principle for the Laplacian would imply that $\zeta_1 \equiv 0$,
in contradiction with \eqref{eq_zeta1_not_0}.

The functions $\{ \psi_j\}$ form an orthogonal basis of $H^2_0(D)$, and hence we may write
$$
w = \sum_j b_j \psi_j \qquad b_j := \langle w , \psi_j  \rangle_{\Delta^2}. 
$$
Now we consider the coefficients $b_j$ and use the definition of a weak solution
to write
\begin{equation} \label{eq_bj}
\begin{aligned}
b_j 
= \int_D \Delta w \Delta \psi_j \, dx 
= \int_D  \psi_j (-\lambda \zeta_1 + \kappa \Delta \zeta_1   ) \, dx.
\end{aligned}
\end{equation}
The coefficients $b_j$ cannot be zero 
for all $j=1,2,3,\dots$, since then $w = 0$, which is a contradiction. 
By dividing by $-\sigma_j$ and switching to $\tilde \kappa_k, \tilde \lambda_j$ from \eqref{eq_tilde_translation}, we see that for at least one $j$ we have
$$
\int_D  \psi_j (\tilde \lambda_j \zeta_1 - \tilde \kappa_j \Delta \zeta_1   ) \, dx \neq 0.
$$
By integrating by parts and using the fact that $\psi_j \in H^2_0(D)$, we get
\begin{equation} \label{eq_f_cond_for_psi}
\begin{aligned}
\int_D  \zeta_1 (\tilde \lambda_j \psi_j - \tilde \kappa_j \Delta \psi_j ) \, dx \neq 0.
\end{aligned}
\end{equation}
In addition, since $\psi_j \in H^2_0(D)$, we have $\psi_j \in V^2_0(D)$
by an analogue of Lemma \ref{lem_H20_in_V20}.
Therefore, referring to \eqref{eq_other_ev_problem}, we see that $\psi_j \in V^2_0(D)$ solves 
\begin{align} \label{eq_f_bvp}
\begin{cases}
\Delta^2 \psi_j  &= \tilde \lambda_j \psi_j - \tilde \kappa_j \Delta \psi_j   , \quad    \text{ on } \quad D, \\
 \p^j_\nu \psi_j |_{\p D} &= 0,\qquad j=0,1.
\end{cases}
\end{align}
We can use Proposition \ref{prop_singular_expansion_D} to write
$$
\psi_j = \psi_{j,S} + \psi_{j,R}
$$
where $\psi_{j,R} \in V^4_0(D)$ and $\psi_{j,S}$ has the singular parts.
We see from equation \eqref{eq_f_cond_for_psi} that the condition in Proposition \ref{prop_sing_cond_D} 
is satisfied by the source in \eqref{eq_f_bvp}. Thus Proposition \ref{prop_sing_cond_D} implies
that the asymptotic term $\psi_{j,S}$  has a non-zero  $c_1  \chi r^{1+z'_1}\varphi(\theta)$ part, and that
\begin{equation} \label{eq_asym_term_non_zero}
\begin{aligned}
\psi_j = c_1 \chi r^{1 + z'_1}\varphi_1(\theta)  + \tilde \psi_{j,R}, \qquad z'_1 \in (0,1), \qquad c_1 \varphi_1 \not \equiv 0,
\end{aligned}
\end{equation}
where $\tilde \psi_{j,R}$ contains the better behaved singular terms and the regular part in the expansion \eqref{eq_sing_expansion_2new}
of $\psi_j$, i.e.
$$
\tilde \psi_{j,R} = 
 \chi \sum_{m=2}^{M'} c_m\varphi_m(\theta) r^{1 + z'_m}  + 
 \chi \sum_{m=1}^{M''} [c'_m\phi_m(\theta) + c''_m \ln r \varphi_m(\theta)] r^{1 + z''_m} + \psi_{j,R}.
$$
Computing $\Delta \psi_j$ from this and \eqref{eq_asym_term_non_zero}  shows that 
$$
\Delta \psi_j \sim \Delta [c_1 r^{1 + z'_1}\varphi_1(\theta)] = C(\theta) r^{z'_1 - 1} \quad \mbox{ as } \quad r \to 0.
$$
Note that this is singular at $r=0$ since $z'_1 \in (0,1)$, provided that $C(\theta) \neq 0$.
The $C(\theta)$ cannot however be zero, since otherwise $\Delta [C r^{1 + z'_1}\varphi_1(\theta)]=0$
with the boundary conditions $ \p^k_\nu[C r^{1 + z'_1}\varphi_1(\theta)]|_{\p \mathcal C_\omega \cap D}=0$, $k=0,1$.
The unique continuation principle would then imply that $C(\theta) r^{1 + z'_1}\varphi_1(\theta) \equiv 0$
near the corner point,
in contradiction with \eqref{eq_asym_term_non_zero}.
\end{proof}

\noindent
Lemma \ref{lem_f_sing} gives us a singular $\Delta \psi_j$ and we can use this to find a singular ITEF.
This goes as follows. The previous Lemma shows that
\begin{equation} \label{eq_Delta_sing}
\begin{aligned}
\Delta \psi_j \sim C(\theta) r^{-\alpha},  
\end{aligned}
\end{equation}
for some $\alpha \in (0,1)$, with $C(\theta) \not \equiv 0$.
Moreover if we consider the $\psi_j$ and $\sigma_j$, given by a large
$-\kappa>0$ and $\lambda = 1$ which is not a Dirichlet eigenvalue,  Proposition \ref{prop_real_ITEVs}
gives us
real ITEVs with $k_1,k_2$ built from $\kappa,\lambda,\sigma_j$ using the formulas \eqref{eq_getting_ks} and \eqref{eq_tilde_translation}. Note that we can pick $k_1,k_2>0$ in formula \eqref{eq_getting_ks}, which together
with \eqref{eq_real_itev_cond} implies that $k_1 \neq k_2$.
In more details, to build solutions to \eqref{eq_ite_1}, we  can define  $v$ and $w$ as
\begin{equation} \label{eq_v_w_def}
\begin{aligned}
v := \frac{ 1 }{ k^2_2 -\omega^2_1 } (\Delta + k_2^2) \psi_j,
\qquad 
w := \frac{ 1 }{ k^2_2 -\omega^2_1 } (\Delta + k_1^2) \psi_j.
\end{aligned}
\end{equation}
Since $\psi_j$ solves \eqref{eq_other_ev_problem} with $k_1^2+k_2^2 = \tilde \kappa_j$ and $k_1^2 k_2^2 = - \tilde \lambda_j$ by \eqref{eq_getting_ks}, we see that
$$
\Delta^2 \psi_j + (k_1^2+k^2_2)\Delta \psi_j  + k^2_1 k^2_2 \psi_j  = 0.
$$
Thus we have a solution to the problem
\begin{equation*} 
\begin{aligned}
\begin{cases}
(\Delta + k_1^2) v = 0, \quad \text{ in }\quad D, \\
(\Delta + k_2^2) w = 0, \quad \text{ in }\quad D, \\
v-w \in H^2_0(D)
\end{cases}
\end{aligned}
\end{equation*}
with $k_1^2 \neq k_2^2$. 
Now  $\psi_j \in H_0^2(D) \subset L^\infty(D)$, and hence we see from \eqref{eq_v_w_def} and \eqref{eq_Delta_sing}
that $w$ and $v$ are singular, so that
$$
v,w \sim C(\theta) r^{-\alpha}, 
$$
for some $\alpha \in (0,1)$, 
provided that  $\omega \in (\omega_0, 2 \pi)$.

Let us summarize the above discussion in the following Proposition, from which Theorem \ref{thm_main_loc}
immediately follows. 

\begin{proposition} \label{prop_main_loc}
Let $\omega \in (\omega_0,2\pi)$.
Assume $\lambda = 1$
and $-\kappa>0$ is large, and let $\psi_j$ and $\sigma_j$ be given by Proposition \ref{prop_K_spectrum}.
Then the ITEVs to \eqref{eq_ite_1} obtained from $\tilde \kappa_j$ and $\tilde \lambda_j$ by means of \eqref{eq_getting_ks} are real and $k_1^2 \neq k_2^2$.
Furthermore the ITEFs $(v,w)$ obtained by \eqref{eq_v_w_def} are such that 
$$
v,w \sim C(\theta) r^{-\alpha}, 
$$
for some $\alpha \in (0,1)$, 
where $C \not \equiv 0$.
\end{proposition}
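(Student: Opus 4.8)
The plan is to assemble the ingredients already established in Subsections \ref{sec_buckling}--\ref{sec_sing_ITEF}; the statement is a synthesis, so there are precisely three claims to verify: that the resulting ITEVs are real, that $k_1^2 \neq k_2^2$, and that the associated eigenfunctions $v,w$ carry the advertised singularity.

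First I would address realness. With the choice $\lambda = 1$ and $-\kappa$ large, Proposition \ref{prop_real_ITEVs} guarantees that every pair $(\tilde\kappa_j, \tilde\lambda_j)$ from \eqref{eq_tilde_translation} satisfies the constraint \eqref{eq_itev_cond}, equivalently \eqref{eq_real_itev_cond}, so that the quantities $k_1, k_2$ produced by \eqref{eq_getting_ks} are real. The same inequality $\tilde\lambda_j \in (-\tilde\kappa_j^2/4, 0)$ is what forces $k_1^2 \neq k_2^2$: it makes the discriminant $\tilde\kappa_j^2 + 4\tilde\lambda_j$ strictly positive, so the two roots $a = \tfrac12(\tilde\kappa_j + \sqrt{\tilde\kappa_j^2 + 4\tilde\lambda_j})$ and $b = \tfrac12(\tilde\kappa_j - \sqrt{\tilde\kappa_j^2 + 4\tilde\lambda_j})$ of the defining quadratic are distinct; hence $k_1^2 = a \neq b = k_2^2$.

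Next I would produce the singular index. By Lemma \ref{lem_f_sing}, applicable precisely because $\omega \in (\omega_0, 2\pi)$ and the family $\{\psi_j\}$ is complete in $H^2_0(D)$ by Proposition \ref{prop_K_spectrum}, there is at least one $j$ for which $\Delta\psi_j \sim C(\theta) r^{-\alpha}$ as $r \to 0$, with $\alpha = 1 - z_1' \in (0,1)$ and $C \not\equiv 0$. Fixing this $j$, I would define $v$ and $w$ by \eqref{eq_v_w_def}. Because $\psi_j$ solves \eqref{eq_other_ev_problem} with $\tilde\kappa_j = k_1^2 + k_2^2$ and $-\tilde\lambda_j = k_1^2 k_2^2$, the fourth-order identity $\Delta^2\psi_j + (k_1^2+k_2^2)\Delta\psi_j + k_1^2 k_2^2\psi_j = 0$ holds, and a direct check exactly as in Section \ref{sec_non_convex_corner} shows that $(v,w)$ solves \eqref{eq_ite_1} with $v - w = \psi_j \in H^2_0(D)$.

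The final and only genuinely delicate point is transferring the singularity from $\Delta\psi_j$ to $v$ and $w$ without cancellation. Here I would invoke the two-dimensional Sobolev embedding $H^2_0(D) \hookrightarrow L^\infty(D)$, so that $\psi_j$ is bounded near the corner. Writing $v = (k_2^2-k_1^2)^{-1}(\Delta\psi_j + k_2^2\psi_j)$, the term $k_2^2\psi_j$ is bounded while $\Delta\psi_j$ blows up like $r^{-\alpha}$; the singular part therefore cannot be annihilated by the bounded part, and $v \sim C(\theta)r^{-\alpha}$ follows, with the identical argument for $w$. The main obstacle is thus not in this proposition itself, whose proof is a collection of prior results, but is deferred to Lemma \ref{lem_f_sing}, where the completeness of $\{\psi_j\}$ together with the pairing criterion of Proposition \ref{prop_sing_cond_D} against $\zeta_1$ from \eqref{eq_zeta_1} is what secures the existence of a single index producing a non-vanishing leading singular coefficient.
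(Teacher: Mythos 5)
Your proposal is correct and follows essentially the same route as the paper: realness and $k_1^2 \neq k_2^2$ from Proposition \ref{prop_real_ITEVs} together with the strict positivity of the discriminant in \eqref{eq_real_itev_cond}, the singular index from Lemma \ref{lem_f_sing}, and the transfer of the singularity to $v,w$ via the embedding $H^2_0(D) \hookrightarrow L^\infty(D)$ so that the bounded term $k_i^2\psi_j$ cannot cancel the $r^{-\alpha}$ blow-up of $\Delta\psi_j$. No gaps beyond those the paper itself defers to Lemma \ref{lem_f_sing}.
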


\section{Vanishing near  convex corners }\label{sec_vanish}

\noindent
We will now study the vanishing of ITEFs at a convex corner.
We consider the refractive index $n \in C^1(\overline D)$, $n > 0$, and the interior transmission problem
\begin{equation} \label{eq_ite_with_n}
\begin{aligned}
\begin{cases}
(\Delta + k^2) v = 0, \quad \text{ in } \quad D, \\
(\Delta + k^2n) w = 0, \quad \text{ in }\quad D, \\
\p^j_\nu (v-w)|_{\p D} = 0,
\end{cases}
\end{aligned}
\end{equation}
where $k >0$ is a  constant and $\nu \in \{0,1\}$. We will also assume that 
\begin{equation} \label{eq_n_corner_assumption}
\begin{aligned}
n(x_0) \neq 1.
\end{aligned}
\end{equation}
Notice also that the problem \eqref{eq_ite_with_n} becomes trivial if $n\equiv 1$.
We will now assume that $D \subset \R^2$ has a single corner at $x_0$ with an angle $\omega \in (0,\pi)$, and that
$\p D \setminus \{ x_0 \}$ is smooth. See figure \ref{fig_convex}

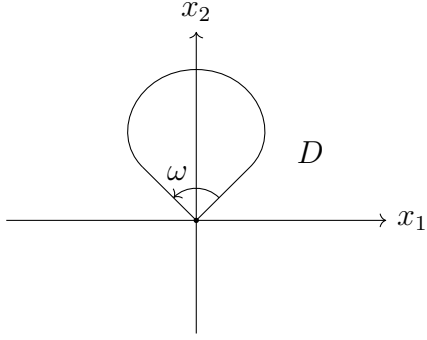
\begin{figure}[H]
  \begin{center}
    \begin{tikzpicture}
      \draw[->] (-2.5,0) -- (2.5,0) node[right] {$x_1$};
      \draw[->] (0,-1.5) -- (0,2.5) node[above] {$x_2$};
      \draw (0.7071,0.7071) to (0,0);
      \draw (-0.7071,0.7071) to (0,0);
      \draw (0.7071,0.7071) to[out angle=45, in angle=135, curve through = {(0,2)}] (-0.7071,0.7071); 
      \fill (0,0) circle (1pt);
      \draw (1.5,0.9) node {$D$};
      \draw[->] (0.3,0.3) arc (45:135:0.424) node[left, pos=.9, xshift=8pt, yshift=8pt] {$\omega$};
    \end{tikzpicture}
    \caption{An example domain $D$ with a convex corner of opening angle $\omega < \pi$. At this type of corner the ITEF vanishes.
			 The localization of an ITEF is thus not possible for convex corners. }
    \label{fig_convex}
  \end{center}
\end{figure}

\medskip
\noindent
From Proposition \ref{prop_V_Dirichlet} we can obtain the following result that we will use to show that ITEFs
for \eqref{eq_ite_with_n} and convex corners vanish at the corner point  (see also \cite{Da16} Theorem~L.1 for
an alternative derivation).

\begin{lemma} \label{lem_convex}
Assume that $\omega < \pi$ and that $f \in L^2(D)$. Suppose that $u \in H^1(D)$ solves
\begin{align} \label{eq_H1_source_problem}
\begin{cases}
\Delta u &= f , \quad \text{ in }\quad D\\
u|_{\p D} &= 0.
\end{cases}
\end{align}
Then we have $u \in H^2(D)$ and $r^{-2} u , r^{-1}\nabla u \in L^2(D)$.
\end{lemma}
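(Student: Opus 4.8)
The plan is to recognize that the three desired conclusions together amount to the single statement $u \in V^2_0(D)$, and then to produce such a $V^2_0$-solution from Proposition \ref{prop_V_Dirichlet} and identify it with the given $u$. Indeed, by Lemma \ref{lem_V_polar} membership in $V^2_0(D)$ is equivalent to $r^{-2+j}|\partial_r^j\partial_\theta^k u| \in L^2(D)$ for $j+k \le 2$; the cases $j+k \le 1$ give exactly $r^{-2}u \in L^2(D)$ and $r^{-1}\nabla u \in L^2(D)$ (recall that in polar coordinates $\nabla u$ is built from $\partial_r u$ and $r^{-1}\partial_\theta u$), while the cases $j+k=2$ bound, after passing to Cartesian coordinates, every second-order derivative in $L^2(D)$, i.e. $u \in H^2(D)$.

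First I would check that Proposition \ref{prop_V_Dirichlet} applies with $\beta = 0$. By the definition \eqref{eq_V_l_b} one has $V^0_0(D) = L^2(D)$, so the hypothesis $f \in L^2(D)$ reads $f \in V^0_0(D)$. The admissibility condition $|\beta - 1| < \tfrac{\pi}{\omega}$ becomes $1 < \tfrac{\pi}{\omega}$, which holds precisely because the corner is convex, $\omega < \pi$. Proposition \ref{prop_V_Dirichlet} then furnishes a unique $\tilde u \in V^2_0(D)$ solving $\Delta \tilde u = f$ in $D$ with $\tilde u|_{\partial D} = 0$.

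The crux is to show $\tilde u = u$, after which $u = \tilde u \in V^2_0(D)$ and we are done by the equivalence noted above. By that same embedding, $\tilde u \in H^2(D) \subset H^1(D)$, and since its trace vanishes, $\tilde u \in H^1_0(D)$; likewise $u \in H^1_0(D)$, as $u \in H^1(D)$ has zero trace on the Lipschitz boundary. Both functions satisfy the weak Dirichlet formulation $\int_D \nabla(\cdot)\cdot\nabla\phi\,dx = -\int_D f\phi\,dx$ for all $\phi \in H^1_0(D)$: for $\tilde u$ this follows from Green's identity, since $\tilde u \in H^2(D)$ and $\phi$ has vanishing trace, and for $u$ it is just the meaning of $\Delta u = f$ together with $u|_{\partial D}=0$. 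Hence the difference $w := u - \tilde u \in H^1_0(D)$ is weakly harmonic; testing against $\phi = w$ gives $\|\nabla w\|_{L^2(D)} = 0$, and the Poincaré inequality forces $w = 0$.

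The main obstacle is precisely this identification step: one must make sure that the given $H^1$-solution and the $V^2_0$-solution produced by Proposition \ref{prop_V_Dirichlet} genuinely solve the same weak problem in the same space $H^1_0(D)$, so that the standard $H^1_0$ uniqueness applies. Once this is in place, the remainder is bookkeeping with the weighted-norm definition \eqref{eq_V_l_b} and Lemma \ref{lem_V_polar}.
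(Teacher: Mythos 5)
Your proposal is correct and follows essentially the same route as the paper: apply Proposition \ref{prop_V_Dirichlet} with $\beta=0$ (noting $V^0_0(D)=L^2(D)$ and that $\omega<\pi$ gives $|\beta-1|<\pi/\omega$), identify the resulting $V^2_0$-solution with the given $H^1$-solution by uniqueness, and read off the three conclusions from the weighted norm. The paper merely invokes ``uniqueness by the $H^1$-theory'' where you spell out the weak-formulation argument; that added detail is fine and no gap remains.
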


\begin{proof} Firstly note that $f \in V^0_0(D)$ since the latter is equal to $L^2(D)$.
Now since $\omega < \pi$, we can set $\beta = 0$ in 
Proposition \ref{prop_V_Dirichlet} and obtain a unique solution $\tilde u \in V^2_0(D)$
to \eqref{eq_H1_source_problem}. We also have $V^2_0(D) \subset H^1(D)$.
Since $u$ is unique by the $H^1$-theory, we see that $u = \tilde u$.
From the definition of $V^2_0(D)$-norm, we obtain directly the facts that 
$u \in H^2(D)$ and $r^{-2} u , r^{-1}\nabla u \in L^2(D)$.
\end{proof}

\medskip
\noindent
The following Proposition shows that we can prove that the ITEFs $v$ and $w$ vanish at convex corner
without apriori assuming that $w$ and $v$ are bounded. Theorem \ref{thm_main_vanishing} is 
a direct consequence of this.

\begin{proposition} \label{prop_simple_vanish}
Suppose that $v,w \in H^1(D)$ are solutions to \eqref{eq_ite_with_n}
and that the opening angle $\omega < \pi$ for the corner point $x_0$. 
Then $w(x_0) = 0$ in the following sense:
\[
  \lim_{\varepsilon \to 0} \frac{1}{|\mathcal C_{\varepsilon}|} \int_{\mathcal C_\varepsilon} w(x) dx = 0,
\]
where $\mathcal C_{\varepsilon} = B(x_0,\varepsilon) \cap D$. The same applies also to $v$.
\end{proposition}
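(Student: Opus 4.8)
The plan is to reduce the transmission system to a single second-order equation for $u:=v-w$ and then transfer the strong corner regularity of $u$, supplied by Lemma~\ref{lem_convex}, to the individual eigenfunction $w$. First I would subtract the two equations in \eqref{eq_ite_with_n}. Writing $v=u+w$ gives $\Delta u + k^2 u = k^2(n-1)w$ in $D$, while the Cauchy conditions $\partial_\nu^j(v-w)|_{\partial D}=0$ say that $u$ has vanishing Dirichlet and Neumann traces. Since $w\in H^1(D)\subset L^2(D)$ and $n\in L^\infty(D)$, we have $f:=k^2(n-1)w\in L^2(D)$, so $u\in H^1(D)$ solves $\Delta u = f-k^2u\in L^2(D)$ with $u|_{\partial D}=0$. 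As $\omega<\pi$, Lemma~\ref{lem_convex} applies and yields $u\in H^2(D)$ together with the weighted bounds $r^{-2}u,\,r^{-1}\nabla u\in L^2(D)$; in particular $u$ is continuous with $u(x_0)=0$, and with the vanishing Neumann trace $u$ has zero Cauchy data at the corner.

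Next I would reduce the assertion to a statement about $\Delta u$ only. From $(n-1)w=\tfrac1{k^2}(\Delta u+k^2u)$ and $n(x_0)\neq1$ (so $n-1$ is bounded away from $0$ on $\mathcal C_\varepsilon$ for small $\varepsilon$), it suffices to show $\tfrac1{|\mathcal C_\varepsilon|}\int_{\mathcal C_\varepsilon}(n-1)w\,dx\to0$: the variable factor $n-1$ may be replaced by the constant $n(x_0)-1$, the error being $\le L\varepsilon\,|\mathcal C_\varepsilon|^{-1/2}\|w\|_{L^2(\mathcal C_\varepsilon)}\sim L\|w\|_{L^2(\mathcal C_\varepsilon)}\to0$ by $|n(x)-n(x_0)|\le Lr$ and Cauchy--Schwarz. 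The term $\tfrac1{|\mathcal C_\varepsilon|}\int_{\mathcal C_\varepsilon}u$ tends to $u(x_0)=0$; equivalently it is $o(1)$ since $\int_{\mathcal C_\varepsilon}|u|\le\|r^2\|_{L^2(\mathcal C_\varepsilon)}\,\|r^{-2}u\|_{L^2(\mathcal C_\varepsilon)}=o(\varepsilon^{3})$. Thus everything comes down to proving $\tfrac1{|\mathcal C_\varepsilon|}\int_{\mathcal C_\varepsilon}\Delta u\,dx\to0$.

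For this I would integrate by parts. With $\Gamma_\varepsilon=\mathcal C_\omega\cap\{r=\varepsilon\}$ and using $\partial_\nu u=0$ on the straight edges $\partial D\cap B(x_0,\varepsilon)$, the divergence theorem gives $\int_{\mathcal C_\varepsilon}\Delta u\,dx=\int_{\Gamma_\varepsilon}\partial_r u\,dS=\varepsilon\int_0^\omega\partial_r u(\varepsilon,\theta)\,d\theta$, so the claim becomes $\tfrac1\varepsilon\int_0^\omega\partial_r u(\varepsilon,\theta)\,d\theta\to0$. This corner-flux estimate is the main obstacle. The scaled trace inequality (Lemma~\ref{lem_scaled_trace}) together with $r^{-1}\nabla u\in L^2(D)$ only yields $\int_0^\omega\partial_r u(\varepsilon,\cdot)\,d\theta\to0$ (via a Cauchy argument from $\int_0^R r^{-1}(g')^2\,dr<\infty$), which is one power of $\varepsilon$ short of what is needed after dividing by $|\mathcal C_\varepsilon|\sim\varepsilon^2$.

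What must close this gap is the interplay of the zero Cauchy data with the convexity $\omega<\pi$. There is no nonzero function homogeneous at the vertex and harmonic with vanishing Cauchy data (the ODE $\Phi''+\gamma^2\Phi=0$ with $\Phi(0)=\Phi'(0)=0$ forces $\Phi\equiv0$), so $u$ carries no Laplace corner singularity and its leading behaviour is dictated by the source; moreover $w\in H^1(D)$ already excludes the truly singular radial modes of $w$ (e.g. $w\sim\log r$ violates $\nabla w\in L^2$), leaving only a borderline constant mode to be ruled out. I would make this rigorous in one of two ways: either by a weighted (Hardy-type) one-dimensional argument for the radial average $g(r)=\int_0^\omega u(r,\theta)\,d\theta$, exploiting the three bounds $\int_0^R r^{-3}g^2\,dr,\ \int_0^R r^{-1}(g')^2\,dr,\ \int_0^R r(g'')^2\,dr<\infty$ that follow from $u\in V^2_0(D)$; or by first upgrading the source (note $(n-1)w\in H^1(D)$ since $n\in C^1(\overline D)$ and $w\in H^1(D)$) and re-applying Proposition~\ref{prop_V_Dirichlet} with a negative weight $\beta$, which is admissible precisely because $\omega<\pi$ gives $\pi/\omega>1$, thereby forcing the angular average of $\partial_r u$ to decay faster than $\varepsilon$. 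Finally, the identical conclusion for $v$ follows from $v=u+w$, since $u$ is continuous with $u(x_0)=0$ and hence averages to $0$.
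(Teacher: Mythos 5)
Your reduction to $u=v-w$ and the appeal to Lemma \ref{lem_convex} coincide with the paper's starting point, and you correctly isolate the true difficulty: one application of the corner estimate to $u$ leaves you exactly one power of $r$ short, and the obstruction is the ``borderline constant mode'' $\Delta u \to \mathrm{const}$ at the vertex --- which is precisely equivalent to $w(x_0)\neq 0$. However, neither of your two proposed ways of closing that gap works. The Hardy-type option fails because all three weighted bounds you list are satisfied by $g(r)\sim c\,r^2$ (equivalently $\partial_r u\sim c\,r$, $\Delta u\to c$): for instance $\int_0^R r^{-1}(g')^2\,dr\sim\int_0^R c^2 r\,dr<\infty$ and $\int_0^R r\,(g'')^2\,dr<\infty$, so these estimates cannot exclude the one mode that matters; trying to rule it out from the equation is circular, since that mode is generated exactly by the source contribution $(n(x_0)-1)\,w(x_0)$ whose vanishing is the claim. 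The negative-weight option fails quantitatively: Proposition \ref{prop_V_Dirichlet} with $\beta=-\delta$ requires $1+\delta<\pi/\omega$, so for $\omega$ close to $\pi$ only arbitrarily small $\delta>0$ is admissible (and the right-hand side lies in $V^0_{-\delta}$ only for small $\delta$ in any case, $w$ being merely $H^1$), whereas the final averaging
\[
\frac{1}{|\mathcal C_\varepsilon|}\int_{\mathcal C_\varepsilon}|w|\,dx
\;\lesssim\; \varepsilon^{\delta-1}\,\lVert r^{-\delta}w\rVert_{L^2(\mathcal C_\varepsilon)}
\]
needs $\delta\geq 1$, i.e.\ $r^{-1}w\in L^2$ near the corner, to conclude.

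The missing idea --- and the way the paper proceeds --- is to differentiate the equation \emph{before} applying the corner estimate. Set $u_j=\partial_j u$. The zero Cauchy data give $\nabla u|_{\partial D}=0$, so $u_j$ has zero Dirichlet trace; and since $n\in C^1(\overline D)$ and $w\in H^1(D)$, the function $u_j$ solves a Dirichlet problem with right-hand side $k^2\partial_j\bigl((n-1)w\bigr)-k^2u_j\in L^2(D)$. Applying Lemma \ref{lem_convex} (that is, Proposition \ref{prop_V_Dirichlet} with $\beta=0$, valid for every $\omega<\pi$) to $\chi u_j$ yields $r^{-1}\nabla(\chi u_j)\in L^2(D)$, hence $r^{-1}\Delta u\in L^2(\mathcal C_\varepsilon)$, hence $r^{-1}w=r^{-1}(k^2n-k^2)^{-1}(\Delta+k^2)u\in L^2(\mathcal C_\varepsilon)$ using $n(x_0)\neq1$; Cauchy--Schwarz then gives the average of $w$ tending to zero with no loss. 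Differentiating gains a full power of $r$ in the admissible weight, where your negative-weight variant gains only $\delta<\pi/\omega-1$. With that substitution your argument becomes essentially the paper's proof.
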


\begin{proof}
Consider the function $ u := v-w$ this solves the problem
\begin{align} \label{eq_u}
(\Delta + k^2) u = k^2(n - 1)w \quad \text{ in }\quad D.
\end{align}
Define $u_j := \p_j (v-w)$. Then  
\begin{align} \label{eq_u_j}
(\Delta + k^2) u_j = k^2 \p_j ((n-1)w) \quad \text{ in }\quad D.
\end{align}
This implies that $\Delta u_j \in L^2(D)$ because $w \in H^1(D)$ and $n \in C^1(\overline{D})$.
Let $\chi$ be a cut-off function as in the proof of Proposition \ref{prop_sing_cond_D} 
related to corner at $x_0=0$.  Then
\begin{align*}
(\Delta + k^2) (\chi u_j) = k^2 \chi \p_j \big( (n-1)w \big) + [\Delta, \chi] u_j =: f \quad \text{ in }\quad D.
\end{align*}
Notice firstly that $f \in L^2(D)$.
Utilizing  the vector identity 
$$
\nabla u = (\nu \cdot  \nabla u)\nu - \nu \times (\nu \times u)\quad \text{ on } \quad \p D,
$$
and the fact that $\p_\nu u |_{\p D} = 0$ and $u|_{\p D} = 0$ gives 
$$
\nabla (v-w) |_{\p D} = 0.
$$
Thus in particular, since $u_j = \mathbf e_j \cdot \nabla (v-w)$, we have
$$
\chi u_j |_{\p D} = 0.
$$
Hence we see that
\begin{equation} \label{eq_ur}
\begin{aligned}
\begin{cases}
(\Delta + k^2) (\chi u_j) =  f, \quad \text{ in }\quad D, \\
\phantom{(\Delta + k^2) [}\chi u_j |_{\p D} = 0.
\end{cases}
\end{aligned}
\end{equation}
From Lemma \ref{lem_convex}, we get the corner behaviors
$$
r^{-2} (\chi u_j), \, r^{-1}\nabla (\chi u_j) \in L^2(D).
$$
Restricting to $\mathcal C_\eps := B(x_0,\eps) \cap D$ where $\chi \equiv 1$, we see that 
$$
r^{-1} \p^2_{j} u \in L^2(\mathcal C_\eps), \qquad j=1,2, 
$$
and thus 
$$
r^{-1} \Delta u \in L^2(\mathcal C_\eps).
$$
We also have by definition that
$
u \in L^2(\mathcal C_\eps).
$
Now, using \eqref{eq_u} and \eqref{eq_n_corner_assumption}, we see that

\begin{equation}
  \label{eq:wrinL2}
  r^{-1} w = r^{-1} (k^2n - k^2)^{-1} (\Delta + k^2) u   \in L^2(\mathcal C_\eps)
\end{equation}
when $\eps$ is small enough so that $n\neq 1$ in $\mathcal C_{\eps}$.
Let $\varepsilon > 0$. Then by Cauchy--Schwarz
\begin{equation*} 
\begin{aligned}
\frac{1}{|\mathcal C_{\varepsilon}|} \int_{\mathcal C_{\varepsilon}} \left| w(x) \right| dx 
&= 
\frac{c}{\varepsilon^2} \int_{\mathcal C_{\varepsilon}} \left| w(x) r^{-1} \right| r dx \\ 
&\leq 
\frac{c}{\varepsilon^2} \sqrt{\int_{\mathcal C_{\varepsilon}} \left| w(x) r^{-1} \right|^2 dx} \sqrt{\int_{\mathcal C_{\varepsilon}} r^2 dx} \\
&= 
\frac{c}{\varepsilon^2} \lVert w r^{-1} \rVert_{L^2(\mathcal C_{\varepsilon})} 
\sqrt{c \int_0^{\varepsilon} r^3 dr } \leq c \lVert w r^{-1} \rVert_{L^2(\mathcal C_{\varepsilon})} \to 0
\end{aligned}
\end{equation*}
as $\varepsilon \to 0$. This is because of \eqref{eq:wrinL2} and so dominated convergence gives the zero limit.
\end{proof}

\section{Relation to corner scattering and open problems} \label{sec_corner_scattering}

\noindent
In this section we explore the connections of our results to the scattering of penetrable obstacles with corners. Let $V \in L^{\infty}(\R^n)$ be a real-valued function with $V = 1$ outside a bounded domain $D$. Inside $D$ we have $V = n$, where $n$ is the refractive index as in \eqref{eq_ite}. In scattering, one probes the medium modelled by $V$ with an incident wave $u^i$ which satisfies
\begin{equation}
  \label{eq:incidentwave}
  \Delta u^i + k^2 u^i = 0 \qquad \text{ in } \R^n.
\end{equation}
The presence of the inhomogeneity $V$ perturbs the propagation of $u^i$, so the physical total field $u$ satisfies
\begin{equation}
  \label{eq:totalwave}
  \Delta u + k^2 V u = 0 \qquad \text{ in } \R^n,
\end{equation}
and we assume that the total wave is a superposition of the incident wave and a scattered wave $u = u^i + u^s$. The direction of time, causality, or energy propagation to infinity is ensured by requiring the Sommerfeld radiation condition
\begin{equation}
  \label{eq:sommerfeld}
  \lim_{r \to \infty}r^{\frac{n-1}{2}} (\p_r - ik) u^s = 0.
\end{equation}
These equations define direct scattering \cite{CK19}, and given $V$ and $u^i$ we can solve for the total field using the Lippmann--Schwinger equation
\begin{equation}
  \label{eq:lippman-schwinger}
  u = u^i - k^2(\Delta + k^2)^{-1}\big( (V-1) u \big).
\end{equation}
Here the Green's operator $(\Delta + k^2)^{-1}$ is defined by
\begin{equation}
  \label{eq:green}
  (\Delta + k^2)^{-1} f(x) = -\frac{i}{4}\left( \frac{k}{2 \pi} \right)^{\frac{n-2}{2}} \int_{R^n} \lvert x-y \rvert^{\frac{2-n}{n}} H^{(1)}_{\frac{n-2}{2}}(k \lvert x-y \rvert) f(y) dy.
\end{equation}
Asymptotic analysis of this operator and the Lippmann--Schwinger equation shows that
\begin{equation}
  \label{eq:u-series}
  u(x) = u^i(x) + \frac{e^{i k \lvert x \rvert}}{\lvert x \rvert^{(n-1)/2}} u_{\infty}(\hat x) + \mathcal O(\lvert x \rvert^{n/2}) \qquad \text{ as } r \to \infty
\end{equation}
where $\hat x = x / r$ is the radial direction and $u_{\infty}^s$ is the far-field pattern of the scattered wave.

The inverse scattering problem is as follows: given the mapping $(u^i, k) \mapsto u_{\infty}^s$, what can we say about $V$? This problem has many variations, e.g. fixed wavenumber $k$ or all wavenumbers, how many and what types of incident waves $u^i$ are available, in which directions can we observe $u_{\infty}^s$, etc. Classical methods for solving inverse scattering problems such as the linear sampling methods of Colton and Kirsch \cite{CKir96} or the factorization method by Kirsch \cite{KigG08} fail when the \emph{far-field operator} has a non-trivial kernel. This means that there is a non-trivial incident wave that produces a trivial far-field pattern. The wavenumber $k$ is called a \emph{non-scattering energy} in these cases.
It also implies that $k$ is an ITEV and that $v = u^i$, $w = u$ solve \eqref{eq_ite}. The converse is not true, as seen by studying scattering from corners \cite{BPS14}. In other words, a non-scattering energy is always an interior transmission eigenvalue, but the converse is not true in general.

\medskip
In this paper we studied the corner behaviour of solutions to the ITE problem \eqref{eq_ite} when the angle is greater or less than $\pi$. In the convex case, we showed that $v=w=0$ in the integral sense at the corner assuming we know a-priori that $v,w \in H^1(D)$. By reducing the scattering problem to the ITE problem, we get yet another proof of the following theorem originally shown\footnote{The result in \cite{PSV17} is more general. By considering all orders of vanishing of $u^i$ they conclude that $u_{\infty}^s=0$ is impossible with a non-trivial incident wave.} in \cite{PSV17}. It can also be proven with alternative methods, e.g. using the result for sources \cite{Bla18} and the connection between source- and potential scattering \cite{BL21}.
\begin{theorem}
  Let $n=2$ and assume that $V$ and $u^i$ produce a trivial far-field $u_{\infty}^s=0$ at wavenumber $k$. If $V_{|D} \in C^1(\overline{D})$ and $V \neq 1$ at a convex corner $x_0$ of $D$ that can be connected to infinity by a chain of balls in $\R^n \setminus \overline{D}$, then one must have $u^i(x_0)=0$ at the corner point.
\end{theorem}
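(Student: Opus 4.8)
The plan is to reduce the non-scattering hypothesis to the interior transmission problem and then to invoke Theorem~\ref{thm_main_vanishing}. First I would upgrade the vanishing of the far field into the vanishing of the scattered wave near the corner. Since $u_\infty^s = 0$, Rellich's lemma forces $u^s$ to vanish in $\{ |x| > R \}$ for any ball $B_R$ containing $\overline D$. The hypothesis that $x_0$ can be joined to infinity by a chain of balls contained in $\R^2 \setminus \overline D$ is then exactly what is needed: applying the unique continuation principle for $(\Delta + k^2)$ successively along the overlapping balls of this chain propagates the vanishing of $u^s$ from the far region into an exterior neighbourhood of $x_0$. In particular $u^s = 0$, and hence $u = u^i$, in a neighbourhood of $x_0$ inside $\R^2 \setminus \overline D$.

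Next I would set $v := u^i|_D$ and $w := u|_D$ and verify that $(v,w)$ solves \eqref{eq_ite} with refractive index $n = V|_D$. The interior equations are immediate: $(\Delta + k^2) v = 0$ in $D$ because $u^i$ solves \eqref{eq:incidentwave} in all of $\R^2$, while $(\Delta + k^2 n) w = (\Delta + k^2 V) u = 0$ in $D$ by \eqref{eq:totalwave} together with $V|_D = n$. For the transmission conditions, note that $v - w = u^i - u = -u^s$ on $D$. Since $V \in L^\infty$ gives $u \in H^2_{loc}(\R^2)$ by elliptic regularity, the function $u^s$ lies in $H^2_{loc}(\R^2)$ and its Cauchy data across the smooth part of $\p D$ is single valued; as $u^s$ vanishes in the exterior neighbourhood of $x_0$ found above, the exterior traces of $u^s$ and $\p_\nu u^s$ vanish on $\p D$ near $x_0$, and continuity forces the corresponding interior traces to vanish as well. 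The same elliptic regularity gives $v, w \in H^1(D)$.

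With $(v,w)$ identified as interior transmission eigenfunctions I would apply Theorem~\ref{thm_main_vanishing}. The corner at $x_0$ is convex ($\omega < \pi$), the index $n = V|_D \in C^1(\overline D)$ satisfies $n(x_0) = V(x_0) \neq 1$, and $v, w \in H^1(D)$, so its conclusion yields
\[
  \lim_{\varepsilon \to 0} \frac{1}{|\mathcal C_\varepsilon|} \int_{\mathcal C_\varepsilon} v(x)\, dx = 0,
\]
where $\mathcal C_\varepsilon = B(x_0,\varepsilon) \cap D$. Finally, since $v = u^i$ solves the Helmholtz equation in all of $\R^2$ it is smooth and in particular continuous at $x_0$, so the integral average above converges to the pointwise value $u^i(x_0)$. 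Comparing with the displayed limit gives $u^i(x_0) = 0$, as claimed.

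The main obstacle I expect is the first step, namely turning $u_\infty^s = 0$ into the vanishing of $u^s$ near the corner. Rellich's lemma alone only gives vanishing far away, and the delicate point is propagating this to $x_0$ through the exterior; this is precisely where the chain-of-balls hypothesis is essential, since without a connected exterior path the scattered field need not vanish near the corner and the reduction to the interior transmission problem would fail. A secondary point to check is that the boundary behaviour of $v-w$ used in Proposition~\ref{prop_simple_vanish} is only needed in a neighbourhood of $x_0$, which is consistent with the local (cut-off based) nature of that argument and with the fact that our unique continuation argument delivers the Cauchy conditions only near the corner.
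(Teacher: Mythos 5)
Your proposal is correct and follows exactly the route the paper intends: the paper gives no detailed proof of this theorem, only the one-line reduction "set $v=u^i$, $w=u$, use Rellich plus unique continuation through the chain of balls, then apply Theorem~\ref{thm_main_vanishing}," and your argument fills in precisely those steps, including the apt observation that the Cauchy data of $v-w$ is only needed near $x_0$ because Proposition~\ref{prop_simple_vanish} is localized by a cut-off.
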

\noindent These types of convex corner results are recent and well-known, but it's interesting that it's possible to finally prove them through the use of inherent properties interior transmission eigenfunctions.

\medskip
A more interesting situation arises when looking at non-convex corners. We proved that there are $L^2(D)$ interior transmission eigenfunctions which have the asymptotic blow-up rate $r^{-\alpha}$, $\alpha \in (0,1)$ as we approach the corner in $D$. Looking at the scattering problem and its usual reduction to the ITE problem \eqref{eq_ite} by setting $v = u^i$ and $w = u$, we see that these new singular ITEFs cannot be obtained through scattering. This is because elliptic regularity guarantees that $u^i, u$ are $H^2$-smooth, which in two dimensions implies boundedness. It is well-known that any $H^1$ ITEF can be approximated by incident Herglotz waves in the $H^1$-norm \cite{Weck03}, however our result says that one cannot go further: \emph{some transmission eigenfunction cannot be incident Herglotz waves or total waves}.

A natural question to ask is: ``what about scattering of non-convex corners?''
Our result shows the existence of some wavenumber $k$ and an associated transmission eigenfunction that's singular. The singularity prevents extension of the eigenfunction to the whole $\R^2$, something noted for convex corners in previous work \cite{BPS14}. However, since it is unclear if every eigenfunction of a non-convex corner is singular, our current result cannot be used to prove something like ``non-convex corners always scatter''.
Looking at the literature, \cite{EH15,EH18} showed this using other methods. On the other hand, the methods of
\cite{BPS14,PSV17,HSV16} and later rely on complex geometrical optics solutions
which decay exponentially only in a half-space, so are not suitable to
non-convex corners. A type of CGO solution that decays in almost all directions
was introduced in \cite{Bla18}, and one could try to answer the question by
reducing the potential scattering problem to source scattering for $u^s$. If
$u^i$ is such that $u_{\infty}^s=0$, then by Rellich's theorem and unique
continuation through a chain of balls prove that $u^s=0$ on the external
boundary of $D$. The scattered wave satisfies then
\[
  (\Delta + k^2)u^s = k^2(1-V) u \qquad \text{ in } \R^n
\]
where $f := k^2 (1-v) u \in H^2(\R^n)$ is H\"older-continuous in two dimensions. Then \cite{Bla18} implies that $f = 0$ in the corners, including the non-convex ones. Since $V \neq 1$ at corners, the total wave must vanish, $u = 0$. Current methods cannot take the next step and show that the incident wave $u^i$ would vanish there without assuming that $k$ is small, or prove that there is higher order vanishing.

The situation of non-convex corners further emphasizes that \emph{$L^2$
interior transmission eigenfunctions is a much more exotic class of functions
than incident waves or total waves}; it even includes unbounded functions in
two dimensions. 

\subsection{Open questions}
The result of Theorem \ref{thm_main_loc} is in many ways incomplete, 
and there are several obvious open open problems related to these phenomena.
Let us mention a few of these.

\begin{itemize}

\item
One of the most immediate open questions is if one can derive our existence results
for non-constant refractive  indices $n$ in \eqref{eq_ite}?\\

\item
Another open questions is whether an ITEF can vanish at a non-convex corner?
And if so, when does this happen? Note that \cite{EH15,EH18} proves that non-convex corners always scatter. This means that ITEFs in this geometry never come from scattering solutions, so almost nothing is known about them.

\item
A related problem is if the singularity can be oscillating? And is there a relation
between the type of singularity and the angle $\omega$, like suggested first in \cite{BLLW17}?

\end{itemize}

\section*{Acknowledgements}

\noindent
V.~P. was supported by the Research Council of Finland (Flagship of Advanced Mathematics for Sensing, Imaging and Modelling grant 359186)
and by the Emil Aaltonen Foundation. E.~B. was supported by the Research Council of Finland (Flagship of Advanced Mathematics for Sensing, Imaging and Modelling grant 359183) and by the Finnish Ministry of Education and Culture's Pilot for Doctoral Programmes (Pilot project Mathematics of Sensing, Imaging and Modelling).

\printbibliography

\end{document}